\newcommand{\N}{\mathbb{N}}
\newcommand{\Z}{\mathbb{Z}}
\newcommand{\cA}{{\mathcal{A}}}
\newcommand{\cG}{\mathcal{G}}
\newcommand{\cH}{\mathcal{H}}
\newcommand{\cS}{\mathcal{S}}
\newcommand{\cT}{\mathcal{T}}
\newcommand{\cL}{\mathcal{L}}
\newcommand{\cU}{\mathcal{U}}
\newcommand{\cY}{\mathcal{Y}}
\newcommand{\cZ}{\mathcal{Z}}
\newcommand{\emptyword}{\epsilon}
\newcommand{\ee}{\mathsf e}
\newcommand{\slp}{{\sc slp}\xspace}
\newcommand{\slps}{{\sc slp}s\xspace}
\newcommand{\tslp}{{\sc tslp}\xspace}
\newcommand{\tslps}{{\sc tslp}s\xspace}
\newcommand{\cslp}{{\sc cslp}\xspace}
\newcommand{\cslps}{{\sc cslp}s\xspace}
\newcommand{\tcslp}{{\sc tcslp}\xspace}
\newcommand{\tcslps}{{\sc tcslp}s\xspace}
\newcommand{\slex}{\mathsf{slex}}
\newcommand{\nf}{\mathsf{nf}}
\newcommand{\lex}{\mathsf{lex}}
\newcommand{\sgn}{\mathrm{sgn}}
\newcommand{\wnf}{w_{\mathsf{nf}}}
\newcommand{\vnf}{v_{\mathsf{nf}}}
\newcommand{\znf}{z_{\mathsf{nf}}}
\newcommand{\JJ}{J}
\newcommand{\CC}{\mathsf{C}}
\newcommand{\DD}{\mathsf{D}}
\newcommand{\jj}{\mathsf{j}}
\newcommand{\Area}{\mathsf{Area}}
\newcommand{\rel}{\mathsf{rel}}
\theoremstyle{plain}
\newtheorem{theorem}{Theorem}[section]
\newtheorem{theoremA}{Theorem}
\newtheorem{proposition}[theorem]{Proposition}
\newtheorem{lemma}[theorem]{Lemma}
\newtheorem{corollary}[theorem]{Corollary}
\newtheorem{remark}[theorem]{Remark}
\theoremstyle{definition}
\newtheorem{definition}[theorem]{Definition}
\newenvironment{proofof}[1]{\normalsize {\it Proof of #1}.}{{\hfill $\Box$}}
\newcommand{\ff}{\mathsf{f}} 
\newcommand{\val}{\mathsf{val}}
\newcommand{\Ht}{\mathsf{h}} %
\newcommand{\tHt}{\mathsf{t}}
\newcommand{\tD}{\mathsf{d}}
\newcommand{\td}{\mathsf{d}} 
\newenvironment{mylist}{\begin{list}{}{
\setlength{\parskip}{0mm}
\setlength{\topsep}{2mm}
\setlength{\parsep}{0mm}
\setlength{\itemsep}{0.5mm}
\setlength{\labelwidth}{7mm}
\setlength{\labelsep}{3mm}
\setlength{\itemindent}{0mm}
\setlength{\leftmargin}{12mm}
\setlength{\listparindent}{6mm}
}}{\end{list}}
\begin{document}
\begin{frontmatter}
\title{The compressed word problem in relatively hyperbolic groups}
\author{Derek Holt and Sarah Rees\\ \vspace{1mm}
\textit{Dedicated to Patrick Dehornoy, our collaborator and friend,
from whom we learned a lot}}

\date{13th July 2021, Warwick}

\begin{abstract}
We prove that the compressed word problem in a group that is hyperbolic
relative to a collection of free abelian subgroups is solvable in
polynomial time.
\end{abstract}

\end{frontmatter}

\section{Introduction}

The main result of \cite{HLS} is that the compressed word problem in a
hyperbolic group is solvable in polynomial time. Here we generalise this
result to a group hyperbolic relative to a set of free abelian subgroups.

\begin{theoremA}\label{thm:main}
The compressed word problem for a group that is hyperbolic relative to
a collection of free abelian subgroups is solvable in polynomial time.
\end{theoremA}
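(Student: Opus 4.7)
The plan is to extend the strategy of \cite{HLS} for hyperbolic groups to the relative setting, combining a hyperbolic-style compressed argument for the ``large-scale'' geometry with a separate, easy treatment of the free abelian parabolic subgroups. The abelian case is essentially immediate: given an \slp of size $n$ over generators of $\Z^k$, one computes the $k$ coordinates of its value as integers of bit-length $O(n)$ by a bottom-up traversal, so the compressed word problem in $\Z^k$ is polynomial-time. The main geometric input on the hyperbolic side is that, since $G$ is hyperbolic relative to the $H_i$, the group admits a \emph{linear relative isoperimetric inequality}: every trivial word $w$ bounds a relative van Kampen diagram over $\langle A \mid R \cup \bigcup_i H_i\rangle$ of area $O(|w|)$, and the coned-off Cayley graph of $(G,\{H_i\})$ is $\delta$-hyperbolic.

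Given an \slp $\cA$ of size $n$ evaluating to a word $w$ in the generators of $G$, I would first use standard compressed-word manipulations to produce polynomial-size \slps for all subwords that the algorithm needs to access. I would then adapt the HLS algorithm to the coned-off graph: instead of computing a shortlex normal form directly, the aim is to produce compressed witnesses for a decomposition $w = u_0\, p_1\, u_1 \cdots p_m\, u_m$ in which each $p_j$ represents an element of a conjugate of some parabolic $H_{i_j}$ and the factors $u_j$ are ``relatively hyperbolic'' pieces that the HLS machinery can verify. For each $p_j$ one also produces an \slp-level conjugator carrying $p_j$ into the standard copy of $H_{i_j} \cong \Z^{k_j}$; triviality of the image is then tested by the abelian compressed routine, while triviality of the remaining $u_j$-pieces together with the parabolic transitions is certified by a compressed relative van Kampen diagram on the hyperbolic side.

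The principal obstacle is that the whole procedure must keep every intermediate \slp polynomially bounded in $n$. In \cite{HLS} this was guaranteed by the linearity of the Dehn function of a hyperbolic group; the analogue here is the \emph{linear relative} isoperimetric inequality, but one has to control ``parabolic collapses'', where long subwords in some $\Z^{k_j}$ compress into short ones, so that the size of the compressed certificate does not explode. I expect this to be handled by combining the bounded coset penetration property, which limits how quasi-geodesic lifts can wander into and out of parabolic cosets, with the efficient compressed arithmetic available in $\Z^{k_j}$: whenever a large parabolic subword is encountered, one replaces it by an \slp-encoded description of its image in $\Z^{k_j}$ and continues the reduction there. Making all of this uniform and polynomial-time should be the main technical content of the argument.
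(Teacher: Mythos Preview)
Your proposal has the right high-level intuition (combine hyperbolic-style arguments in the coned-off graph with compressed arithmetic in the abelian parabolics), but it contains a genuine gap at the level of the main mechanism, and it also misidentifies what the HLS algorithm actually does.

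First, the decomposition $w = u_0\,p_1\,u_1\cdots p_m\,u_m$ is not usable as stated: the number $m$ of parabolic pieces can be of the order of $|w|$, which is exponential in $|\cG|$, so you cannot iterate over the pieces, and even ``compressed witnesses'' for them do not help unless you have a recursive scheme that avoids ever materialising the list. More seriously, the pieces $u_j$ do not lie in any hyperbolic group to which \cite{HLS} applies; they are still words in $G$, and there is no black-box reduction to the hyperbolic case. Your fallback, a ``compressed relative van Kampen diagram'', does not work either: the linear \emph{relative} isoperimetric inequality bounds the area by a linear function of $|w|$, not of $|\cG|$, so the diagram is already of exponential size before any compression, and \cite{HLS} does not proceed via van Kampen diagrams in any case.

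What the paper (and \cite{HLS}) actually does is compute, in polynomial time, a compressed \emph{normal form} $\nf(\val(\cG))$ and then test whether it is the empty word. The recursion is on the \slp structure: if $\rho(A)=BC$ and one already has compressed normal forms $v_1,v_2$ for $B,C$, one uses the $\delta$-thinness of the geodesic triangle on $\widehat{v_1},\widehat{v_2},\widehat{v_3}$ in the coned-off graph $\widehat{\Gamma}$ to locate, by binary search on compressed prefixes, short $\Gamma$-bridges near the meeting point, and then assembles a compressed representation of $v_3=\nf(v_1v_2)$. The parabolic subgroups enter not as separate factors $p_j$ but as the \emph{components} of words (maximal subwords over $\Sigma\cap H_i$); one keeps these components rooted in the \slp and replaces them, when needed, by compact abelian \slps via Proposition~\ref{prop:abslp}. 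To keep sizes polynomial one cannot stay within \slps: the paper introduces \emph{tethered} cut-\slps, with right-hand sides $B\langle\alpha,\beta\rangle$ meaning $\nf(\alpha\,\val(B)\,\beta^{-1})$, builds a \tcslp for $\nf(\val(\cG))$ (Theorem~\ref{thm:slextcslp}), and then separately proves that such a \tcslp can be converted back to an \slp in polynomial time (Theorem~\ref{thm:convert}). Your sketch is missing both this normal-form recursion in $\widehat{\Gamma}$ and the tethering device that controls the size blow-up; without them the argument does not go through.
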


We prove the theorem by extending the arguments of \cite{HLS} from hyperbolic
to relatively
hyperbolic groups. Our principal source for results about this class of
groups is \cite{AC}. In particular, we use the automaticity
of groups of that type, proved in \cite[Theorem 7.7]{AC}; however we need to 
construct a new asynchronously automatic structure, with particular 
properties that we need.

We believe that our result remains true if we substitute arbitrary (finitely
generated) abelian groups for free abelian groups in the theorem statement
but (as we shall explain after the definition of the \emph{components}
of a word in Section~\ref{sec:relhyp}) extending the proof in that way would
result in further technical difficulties in a proof that is already highly
technical, so we decided not to attempt it here.
It might be more interesting to try to extend
the result to groups hyperbolic with respect to a collection of virtually
abelian subgroups, but we are not currently able to extend our methods to
cover that case in general.

We introduce basic concepts and notation, including the definitions of straight line programs (\slp) and the compressed word problem, in 
Section~\ref{sec:defs_notation}, which follows this
introduction. Section~\ref{sec:relhyp} contains the definition of relatively
hyperbolic groups (following Osin \cite{Osin}) and properties of those
that we shall need in the article. Section~\ref{sec:slp_background} provides
basic background material on \slps, 
Section~\ref{sec:slps_ab} gives some results for \slps associated with 
finitely generated abelian groups, 
Section~\ref{sec:gammahat_geom} examines the geometry of the compressed 
Cayley graph $\widehat{\Gamma}$ of a relatively hyperbolic group $G$,
and Section~\ref{sec:slps_relhyp_props} establishes
some results for \slps associated with such a group.
The main result of the article, Theorem~\ref{thm:main}, is proved across the final
two sections, Sections~\ref{sec:convert} and \ref{sec:slextcslp}, as the
concatenations of two theorems, Theorem~\ref{thm:slextcslp} and Theorem~\ref{thm:convert}.

The authors would like to thank Saul Schleimer, for introducing us to the study
of the compressed word problem, especially for hyperbolic groups \cite{HLS},
and Yago Antolin, for some very helpful discussions on the properties of
relatively hyperbolic groups and their automatic structures.
We are also grateful to an anonymous referee for a careful reading of
the paper and several helpful comments and suggestions.

\section{Definitions and notation}
\label{sec:defs_notation}
To a large extent (but not entirely) our notation and definitions follow
\cite{HLS}.

\subsection{Words}
For a finite set $\Sigma$ (which we call an {\em alphabet}), we define a
{\em word} $w$ over $\Sigma$ to be a string $x_0\cdots x_{n-1}$, where each
$x_i$ is in $\Sigma$. We denote by $|w|$ the length $n$ of $w$,
by $\emptyword$ the empty word (of length 0), and
for $0 \leq i < j \leq n$
we denote by $w[i:j)$ the substring $x_i\cdots x_{j-1}$,
which we call a {\em subword} of $w$ (in \cite{HLS}, and elsewhere,
such a substring is called a {\em factor}).
We abbreviate the prefix $w[0:j)$ of $w$ as $w[:j)$, its suffix $w[i:n)$ as $w[i:)$,
and $w[i:i+1)=x_i$ as $w[i]$, and also consider $\emptyword$
to be a subword of $w$.

For words $v,w \in \Sigma^*$,
we write $v=w$ if $v$ and $w$ are equal as strings
and, when $\Sigma$ is a generating set of a group $G$, we write $v=_G w$
if $v$ and $w$ represent the same element of $G$.
All group generating sets $\Sigma$ in this article will be assumed to be
inverse closed (that is, $x \in \Sigma \Rightarrow x^{-1} \in \Sigma$).

Suppose that $\Sigma$ is an ordered finite generating set for a group $G$
and that $w \in \Sigma^*$ represents an element $g \in G$. Then we define
$\slex(w)$ (or $\slex_\Sigma(w))$ to be the shortlex minimal word representing
$g$; that is, $\slex(w)$ is the lexicographically least among the shortest
words that represent the same group element as $w$.

\subsection{Straight-line programs}
Let $\Sigma$ be a finite alphabet and $V$ a finite set with
$V \cap \Sigma = \emptyset$.
Let  $\rho: V \to (V \cup \Sigma)^*$ be a map and extend the definition of
$\rho$ to $(V \cup \Sigma)^*$ by defining
$\rho(a)=a$ for all $a \in \Sigma \cup \{\emptyword\}$
and $\rho(uv) = \rho(u)\rho(v)$ for all $u,v \in  (V \cup \Sigma)^*$.
We define the associated  binary relation $\succeq$ on $V$ by
$A \succeq B$ whenever the symbol $B$ occurs within the string $\rho^k(A)$,
for some $k \geq 0$.

We define a {\em straight-line program} (\slp for short) over an alphabet 
$\Sigma$ to be a triple $\cG = (V,S,\rho)$, with $S \in V$ and
$\rho: V \to (V \cup \Sigma)^*$ a map such that the associated binary
relation $\succeq$ on $V$ is acyclic,
that is the corresponding directed graph contains no directed cycles.
The set $V$ is called the set of {\em variables} of $\cG$, and $S$ is
called the {\em start variable}.
Where necessary, we write $V_\cG$,  $S_\cG$, $\rho_\cG$, rather than simply
$V,S,\rho$.

An \slp $\cG$ is naturally associated with a
context-free grammar $(V,\Sigma, S, P)$, where $P$ is the
set of all productions $A \to \rho(A)$ with $A \in V$,
and we will often use the name $\cG$ also for this grammar.
It follows from the definition of an \slp that this associated grammar 
derives exactly one terminal word, which we call the {\em value} of $\cG$
and denote by $\val(\cG)$.

\slps are used to provide succinct representations of words that contain many
repeated substrings. For instance, the word $(ab)^{2^n}$ is the value of
the \slp $\cG = (\{A_0, \ldots,A_n\},\rho,A_0)$ with
$\rho(A_n) = ab$ and $\rho(A_{i-1}) = A_i A_i$ for $0 < i \leq n$.
We provide more background on \slps in Section~\ref{sec:slp_background}.

\subsection{The compressed word problem}
The {\em compressed word problem} for a finitely generated group $G$ with
the finite symmetric generating set $\Sigma$
is the following decision problem:
\begin{description}
\item[Input:]  an \slp $\cG$ over the alphabet $\Sigma$.
\item [Question:] does $\val(\cG)$ represent the group identity of $G$?
\end{description}
It is an easy observation that the computational complexity of the compressed
word problem for $G$ is independent of the choice of generating set $\Sigma$;
more precisely, if $\Sigma'$ is another finite symmetric generating set for $G$,
then the compressed word problem for $G$ with respect to $\Sigma$ is log-space
reducible to the compressed word problem for $G$ with respect to $\Sigma'$
\cite[Lemma~4.2]{Loh14}.
So, when proving that the compressed word problem for $G$ is solvable in
polynomial time, we are free to choose 
whichever finite symmetric generating
set of $G$ is most convenient for the purpose.

\subsection{Fellow travelling and automatic groups}
Suppose that $\Gamma=\Gamma(G,\Sigma)$ is the Cayley graph for a group $G$
with finite symmetric generating set $\Sigma$. Let $v,w$ be words over $\Sigma$,
and let $\gamma_v,\gamma_w$ be the
paths traced out in $\Gamma$ by $v,w$ from the identity vertex $1$ of $\Gamma$.
For $0 \le i < |v|$, 
we denote the vertex of $\Gamma$ labelled $v[i]$ by
$\gamma_v[i]$, and similarly for $w[i]$ and $\gamma_w[i]$. (So, in particular,
$\gamma_v[0]=\gamma_w[0]$ is the identity vertex.)
For the following definition, we define
$\gamma_v[i] := \gamma_v[|v|-1]]$ for integers $i \ge |v|$.

We say that the words $v$ and $w$ {\em fellow travel} at distance $k$
(or, more briefly, {\em $k$-fellow travel}) if
$d_\Gamma(\gamma_v[i],\gamma_w[i]) \le k$ for all $i \ge 0$.
In other words, the distance in $\Gamma$ between the vertices on $\gamma_v$ and
$\gamma_w$ at the ends of subpaths traced out by the subwords $v[:i)$ and
$w[:i)$ of $v$ and $w$
is at most $k$.
In this situation we also say that the paths $\gamma_v$ and $\gamma_w$
$k$-fellow travel.

We can extend this terminology to  paths $\gamma_v,\gamma_w$ that do not start
at the same vertex in $\Gamma$. In particular, if two such paths $k$-fellow
travel, then their start points and also their end points are
at distance at most $k$ from each other.

We say that the words $v$ and $w$ {\em asynchronously fellow travel}
at distance $k$ if we can choose sequences $(i_0=0,i_1,\ldots,i_n=|v|-1)$ and
$(j_0=0,j_1,\ldots,j_n=|w|-1)$, with $i_{t+1}\in \{i_t,i_t+1\}$,
$j_{t+1} \in \{j_t,j_t+1\}$ for each $t$, such that
$d_\Gamma(\gamma_v[i_t],\gamma_w[j_t])\leq k$ for each $t$ with $0 \le t \le n$,
and again we may also apply this
terminology to the paths that 
they trace out in $\Gamma$.

We call the pairs of vertices $\gamma_v[i_t],\gamma_w[j_t]$
\emph{corresponding vertices} in the fellow travelling.
Note that a vertex on one of the paths can have more than one corresponding
vertex on the other path, but in that case the corresponding vertices
are the vertices lying on a contiguous subpath of the other path.

We say that $G$ is {\em automatic} if there exists a finite state automaton $A$ over
$\Sigma$
for which every element of $G$ has at least one representative word in $L(A)$,
as well as an integer $k$ such that,
whenever $v,w \in L(A)$ and either $v=_G w$ or $v=_G wx$ with $x \in \Sigma$,
then $v,w$ fellow travel at distance $k$. We say that $G$ is
{\em asynchronously automatic} if the same is true but with an asynchronous fellow traveller property, and {\em (asynchronously) biautomatic}
if, in addition, for words $u,w$ that satisfy $xu =_G w$ with $x \in \Sigma$,
that paths traced out by $u,w$ from the vertices $x$ and $1$, respectively,
(asynchronously) fellow travel at distance $k$.

We call the pair $(A,k)$ an {\em automatic structure} (or {\em asynchronously automatic structure}) for $G$. An automatic structure $(A,k)$ is called 
{\em geodesic} or {\em shortlex} if each word in $L(A)$ is a representative 
of minimal length, or minimal within the shortlex word ordering,
of the element that it represents, respectively.
We say that $(A,k)$ is a {\em structure with uniqueness}
if it contains a unique representative of each element of $G$.
We refer to \cite{ECHLPT} for basic properties of automatic structures and
automatic groups.

\section{Relatively hyperbolic groups}\label{sec:relhyp}
The purpose of this section is to give the definition of a relatively
hyperbolic group and to list the properties that we shall need in this
article.
The properties we need are proved in the article \cite{AC}, and build on
results of \cite{Osin}. 
We have used Osin's definition of relatively hyperbolicity;
it is proved in \cite[Theorem 1.5]{Osin} that (for finitely generated groups,
as in our case) this is equivalent to the definition of \cite{Bowditch},
also to the definition of \cite{Farb} combined with the
Coset Penetration Property (see below), called strong relative hyperbolicity
in \cite{Farb}.
Below we have (essentially) used notation and statements from \cite{AC}.

We suppose that $\Sigma$ is a finite generating set for a group $G$, and that
$\{H_i : i \in \Omega\}$ is a finite collection of subgroups of a
$G$, which we call the collection of
\emph{parabolic subgroups} of $G$.
Define $\cH := \bigcup_{i \in \Omega} (H_i \setminus \{1\})$, and 
$\widehat{\Sigma} := \Sigma \cup \cH$.
We let $\Gamma:=\Gamma(G,\Sigma)$ and
$\widehat{\Gamma}=\widehat{\Gamma}(G,\widehat{\Sigma})$ be the Cayley graphs
for $G$ over $\Sigma$ and $\widehat{\Sigma}$, respectively.
(So $\widehat{\Gamma}$ has the same vertices as $\Gamma$ but more edges
than $\Gamma$.)
We call a word over $\Sigma$ (or $\widehat{\Sigma}$) \emph{geodesic}
if it labels a geodesic path in
$\Gamma$ (or $\widehat{\Gamma}$).

Following \cite[Definition 2.5]{AC} and \cite[Section 1.2]{Osin},
we define $F$ to be the free product of groups
\[ F:= (\ast_{ i \in \Omega} H_i) \ast F(\Sigma) \] 
and suppose that a finite subset $R$ of $F$ exists whose normal closure in $F$
is the kernel of the natural map from $F$ to $G$; in that case we say that
$G$ has the {\em finite presentation} 
\[ \left\langle X \cup \left. \bigcup_{i \in \Omega} H_i\, \right\vert  R \right\rangle \] 
{\em relative to} the collection of subgroups  $\{ H_i : i \in \Omega \}$.
Now if $u$ is a word over $\widehat{\Sigma}$ that represents the
identity in $G$, then $u$ is equal within $F$ to a product of the form
\[  \prod_{j=1}^n f_j r_j^{\eta_j} f_j^{-1}, \]
with $r_j \in R, f_j \in F$ and $\eta_j = \pm 1 $ for each $j$.
The smallest possible value of $n$ in any such expression of this type for $u$
is called the {\em relative area} of $u$, denoted by $\Area_\rel(u) $.

We say that $G$ is \emph{hyperbolic relative to}
the collection of subgroups $\{H_i\}$ if it has a finite relative
presentation as above and a constant $C \geq 0$ such that
\[ \Area_\rel(u) \leq C |u| \]
for all words  $u$ over $\widehat{\Sigma}$ that represent the identity in $G$.

We note that if $G$ is relatively hyperbolic then the graph $\widehat{\Gamma}$
is Gromov-hyperbolic. Note also that, by \cite[Proposition 2.36]{Osin},
the intersection $H_i \cap H_j$ for $i \ne j$ is finite.

The notation and results that follow are all taken from \cite{AC}.
Given a path $p$ in $\widehat{\Gamma}$,  we say that the path $p$
\emph{penetrates} the left coset $gH_i$ if $p$ contains an edge
labelled by an element of $H_i$ that connects two vertices of $gH_i$.
An $H_i$--\emph{component} of such a path is defined to be 
a non-empty maximal subpath of $p$ that is labelled by a word in $H_i^*$.
Two components $s$ and $r$ (not necessarily of the same path) are
{\em connected} if both are $H_i$-components for some $H_i$, and if the start
points of both paths lie in the same left coset $gH_i$ of $H_i$.

A path $p$ is said to \emph{backtrack} if 
$p=p'srs'p''$ where $s,s'$ are $H_i$--components, and the word labelling $r$ 
represents an element of $H_i$; if no such decomposition of $p$ exists, then
$p$ is \emph{without backtracking}.
A path $p$ is said to \emph{vertex backtrack} if it contains a subpath of
length greater than 1 labelled by a word that represents an element of some
$H_i$; otherwise $p$ is said to be \emph{without vertex backtracking}.
We note that if a path does not vertex backtrack then it does not backtrack
and all of its components have length 1.

We denote the start and end points of a path  $p$ in $\widehat{\Gamma}$ by
$p_-$ and $p_+$, respectively, and
say that paths $p$, $q$ in $\widehat{\Gamma}$ are \emph{$k$-similar} if
$\max\{d_\Gamma(p_-,q_-),d_\Gamma(p_+,q_+)\} \le k$.
The following fundamental result about $k$-similar paths in $\widehat{\Gamma}$,
proved as \cite[Theorem 3.23]{Osin}, is also stated as \cite[Theorem 2.8]{AC}. 

\begin{proposition}\label{prop:bcpp} \cite[Theorem 3.23]{Osin}.
(Bounded Coset Penetration Property)
Let $G$ be relatively hyperbolic, as above.
For any $\lambda \ge 1$, $c \ge 0$, $k \ge 0$, there exists a constant
$\ee = \ee(\lambda,c,k)$ such that, for any two $k$-similar paths
$p$ and $q$ in $\widehat{\Gamma}$ that are $(\lambda,c)$--quasigeodesics and
do not backtrack, the following conditions hold.
\begin{mylist}
\item[(1)] The sets of vertices of $p$ and $q$ are contained in the
closed $\ee$-neighbourhoods of each other in $\Gamma$.
\item[(2)] Suppose that, for some $i$, $s$ is an $H_i$-component of $p$ with
$d_\Gamma(s_-,s_+) > \ee$; then there exists an $H_i$-component
of $q$ that is connected to $s$.
\item[(3)] Suppose that $s$ and $t$ are connected $H_i$-components of $p$
and $q$, respectively. Then $s$ and $t$ are $\ee$-similar.
\end{mylist}
\end{proposition}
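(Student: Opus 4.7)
My plan is to treat this as Osin's theorem and sketch the van Kampen diagram argument that underlies it, since the statement is already too substantial to re-derive from scratch. The overall strategy is to close the pair $p,q$ into a loop, invoke the linear relative isoperimetric inequality to obtain a van Kampen diagram of area linear in $|p|+|q|$, and then exploit Gromov hyperbolicity of $\widehat{\Gamma}$ together with the quasigeodesic and no-backtracking hypotheses to extract the three conclusions.

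First I would form a closed loop $\sigma$ in $\widehat{\Gamma}$ by concatenating $p$, a bridging word of length at most $k$ in $\Gamma$ from $p_+$ to $q_+$, the reverse of $q$, and a bridging word of length at most $k$ in $\Gamma$ from $q_-$ to $p_-$. Since the bridges are in $\Sigma^*$, their images in $\widehat{\Sigma}^*$ have length at most $k$ as well, so $\sigma$ has length at most $|p|+|q|+2k$ (here length is measured in $\widehat{\Gamma}$). As $\sigma$ represents the identity, the relative isoperimetric hypothesis gives a van Kampen diagram $\Delta$ over the relative presentation with $\Area_{\rel}(\Delta)\le C(|p|+|q|+2k)$. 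The $(\lambda,c)$-quasigeodesic hypothesis turns this linear bound into an effective one: the number of edges of $\sigma$ that can contribute to a single ``thick'' region of $\Delta$ is controlled by $\lambda$ and $c$.

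Next I would feed this into the thin-triangles picture for $\widehat{\Gamma}$. Each $H_i$-component of $p$ or $q$ corresponds to a single vertex of $\widehat\Gamma$ of valence in $H_i$; conclusion (2) says that a component which is genuinely long in $\Gamma$ must have a partner across $\Delta$. To prove it I would identify, for each such long component $s$ of $p$, the cells of $\Delta$ incident to the vertex representing $s$. Because $s$ has large $\Gamma$-diameter but the bridging pieces and bounded-length cells on the $p$-side of $\Delta$ cannot absorb it, the $H_i$-edges adjacent to $s$ in $\Delta$ must connect, across the diagram, to $H_i$-edges belonging to $q$; this produces the connected component of $q$ required in (2), and conclusion (3) is immediate from tracing the $\Gamma$-distance between $s_\pm$ and the corresponding endpoints on $q$ through $\Delta$. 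Conclusion (1) then follows from (2) and (3) by the usual induction on penetrated cosets: outside the long components each path is a quasigeodesic in $\Gamma$ of controlled length, and Gromov hyperbolicity plus the bounded bridges forces mutual $\ee$-closeness.

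The main obstacle, and the part that makes the $\ee$ depend only on $\lambda,c,k$, is controlling the interaction between short components (which need not have partners at all) and long ones. The no-backtracking hypothesis on $p$ and $q$ is exactly what prevents a long component from being ``cancelled'' by a chain of short ones in $\Delta$, and the quasigeodesic hypothesis bounds the number of short components accumulating between consecutive long ones. I would handle this by a careful inductive bookkeeping on the cells of $\Delta$ ordered along $p$, isolating at each step either a matched long-component pair (contributing to (2) and (3)) or a bounded chunk of diagram whose perimeter on the $p$- and $q$-sides is uniformly short; the constant $\ee$ then emerges as an explicit function of $C$, $\lambda$, $c$, $k$, and the Gromov hyperbolicity constant of $\widehat{\Gamma}$.
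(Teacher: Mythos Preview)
The paper does not prove this proposition at all: it is quoted verbatim as \cite[Theorem 3.23]{Osin} (also \cite[Theorem 2.8]{AC}) and used as a black box. So there is no proof in the paper to compare against; your sketch is an attempt to reconstruct Osin's original argument, and in broad outline---close $p$ and $q$ into a cycle, invoke the linear relative isoperimetric inequality to get a van Kampen diagram over the relative presentation, and read off how components are linked across the diagram---you have the right shape.

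As an independent argument, however, your sketch has real gaps. The engine of Osin's proof is a quantitative lemma (his Lemma~2.27 / Proposition~2.29) bounding the total $\Gamma$-length of the \emph{isolated} components of a cycle---those not connected to any other component of the cycle---linearly in the number of relator cells of a minimal diagram. That is what forces a long component $s$ of $p$ to be connected to something on $q$: if it were isolated in the cycle $\sigma$, its $\Gamma$-length would be bounded in terms of $\lambda,c,k$, contradicting $d_\Gamma(s_-,s_+)>\ee$. Your phrase ``the $H_i$-edges adjacent to $s$ in $\Delta$ must connect, across the diagram, to $H_i$-edges belonging to $q$'' is exactly where this lemma is doing the work, and it does not follow from generalities about thin triangles or area bounds alone. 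Similarly, your derivation of (1) from (2) and (3) ``by the usual induction on penetrated cosets'' is not how the argument goes: (1) needs the hyperbolicity of $\widehat{\Gamma}$ to get Hausdorff closeness in $\widehat{\Gamma}$, and then the isolated-components lemma again to upgrade this to closeness in $\Gamma$ by controlling the $\Gamma$-length of the segments between matched components. If you want your sketch to stand on its own, those two uses of the isolated-components bound are the missing load-bearing steps.
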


The next three results are derived in \cite{AC} from the Bounded Coset
Penetration Property. 

We define the \emph{components} of a word $w \in \Sigma^*$ to be the
nonempty subwords of $w$ of maximal length that lie in $(\Sigma \cap H_i)^*$
for some parabolic subgroup $H_i$.
In general, since $H_i \cap H_j$ is finite for $i \ne j$, it is possible
for the end of one component to overlap the beginning of the next, where
the overlapping generators lie in a finite intersection. 
In this paper, we shall generally be assuming that $H_i \cap H_j$ is
trivial for $i \ne j$, in which case the components are necessarily disjoint.
This holds in particular when the $H_i$ are free abelian groups, and it is
the main reason why we have not attempted to generalise our main theorem to
groups that are hyperbolic relative to arbitrary finitely generated abelian
groups. We strongly believe that such a generalisation would be possible,
but it might involve significant additional technicalities, of a similar nature
to those involved in \cite{AC}.

Let $w := \alpha_0u_1\alpha_1u_2 \cdots u_n \alpha_n$, where the $u_j$
are its components.  Then, following \cite[Construction 4.1]{AC}, we define
the {\em derived word}
$\hat{w} := \alpha_0h_1\alpha_1h_2 \cdots h_n \alpha_n \in
\widehat{\Sigma}^*$,
where each $h_j$ is the element of a parabolic subgroup represented by $u_j$. 
So the components of paths in $\Gamma$ and $\widehat{\Gamma}$ labelled by $w$
and $\hat{w}$ are labelled by the subwords $u_i$ and $h_i$ of $w$ and
$\hat{w}$, respectively.

A word over $\Sigma$ is said to have a \emph{parabolic shortening}
if, for some $i$, it has a component over $\Sigma \cap H_i$ that is
non-geodesic; otherwise it has {\em no parabolic shortenings}.

\begin{proposition} \label{prop:genset} \cite[Lemma 5.3, Theorems 7.6, 7.7]{AC}
Let $G$ be a finitely generated group, hyperbolic with respect to a
family of subgroups $\{H_i\}_{i \in \Omega}$, and let $\Sigma'$ be a finite
generating set of $G$. 

Then there exist $\lambda \ge 1$, $c \ge 0$ and a finite subset
$\cH'$ of $\cH$ such that, for every finite, ordered generating set $\Sigma$ of
$G$ with $\Sigma' \cup \cH' \subseteq \Sigma \subseteq \Sigma' \cup \cH$ 
for which each $H_i$ has a geodesic biautomatic structure over
$\Sigma \cap H_i$, we have:
\begin{mylist}
\item[(i)]
$G$ has a geodesic biautomatic structure over $\Sigma$ which is
a shortlex structure if the structures on $H_i$ are shortlex;
\item[(ii)]
there exists a finite set $\Phi$ of non-geodesic words over $\Sigma$ such
that, for each word $w \in \Sigma^*$ with no parabolic shortenings and no
subwords in $\Phi$, the word $\hat{w} \in (\Sigma \cup \cH)^*$ is a
$(\lambda,c)$--quasigeodesic without vertex backtracking.
\end{mylist}
\end{proposition}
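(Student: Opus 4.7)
The plan is to derive both parts simultaneously from the Bounded Coset Penetration Property of Proposition~\ref{prop:bcpp}, together with the standard local-to-global principle for quasigeodesics in the $\delta$-hyperbolic graph $\widehat{\Gamma}$. I would begin by fixing base parameters $\lambda_0\ge 1$, $c_0\ge 0$, taking $L=L(\delta,\lambda_0,c_0)$ to be the length threshold above which any path in $\widehat{\Gamma}$ that is locally $(\lambda_0,c_0)$-quasigeodesic on every sub-path of length at most $L$ is globally $(\lambda,c)$-quasigeodesic for some enlarged pair $(\lambda,c)$, setting $\ee=\ee(\lambda,c,1)$ from Proposition~\ref{prop:bcpp}, and then choosing $\cH'$ to consist of the elements of each $H_i$ of $\Sigma'$-length at most $\ee$. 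These constants, together with the bound $N$ introduced below, are to serve for every admissible choice of $\Sigma$.

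For part (ii), I would define $\Phi$ to be the finite set of non-geodesic words over $\Sigma$ of length at most a uniform $N=N(L,\ee)$ that are not themselves parabolic shortenings. Suppose $w$ has no parabolic shortenings and no subword in $\Phi$. Then each component $u_j$ of $w$ is geodesic in its parabolic and so contributes exactly one $\widehat{\Sigma}$-edge $h_j$ to $\widehat{w}$. Moreover, every subword of $w$ of length at most $N$ is a $\Sigma$-geodesic, and after contracting its parabolic components it becomes a subpath of $\widehat{w}$ of $\widehat{\Sigma}$-length at most $L$ that is $(\lambda_0,c_0)$-quasigeodesic in $\widehat{\Gamma}$; the local-to-global principle then promotes $\widehat{w}$ to a global $(\lambda,c)$-quasigeodesic. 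For the absence of vertex backtracking, any subword of $\widehat{w}$ of length at least $2$ representing an element of some $H_i$ gives, via comparison with the single parabolic edge along the corresponding coset and Proposition~\ref{prop:bcpp}, a bounded-$\Sigma$-length configuration in $w$ which we add to $\Phi$; since up to translation only finitely many such configurations arise, $\Phi$ remains finite.

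For part (i), I would let the accepted language $L(A)$ consist of those $\Sigma$-geodesic (or shortlex) representatives in which each component $u_j$ is the biautomatic (respectively, shortlex) representative of its element in $H_{i_j}$. Regularity of $L(A)$ follows from the known regularity of the language of $\Sigma$-geodesics in $G$ --- itself a consequence of hyperbolicity of $\widehat{\Gamma}$ together with part (ii) applied to locally geodesic words --- intersected with a regular per-component filter built from the automata accepting the parabolic biautomatic structures. Uniqueness of representatives is then automatic. The synchronous fellow-traveller constant arises from applying Proposition~\ref{prop:bcpp} to the derived paths of two accepted words $v,w$ with $v=_G w$ or $v=_G wx$: by part (ii) they are $(\lambda,c)$-quasigeodesics that are at most $1$-similar, so BCPP matches their parabolic components in $\ee$-similar pairs; within each matched pair the parabolic biautomatic structure supplies the fellow-traveller bound, while between matched components the $\alpha_j$-bridges on $v$ and $w$ remain at uniformly bounded $\Gamma$-distance by clause~(1) of the proposition.

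The main obstacle will be item (ii), and specifically the verification that the set $\Phi$ really is finite. One needs to enumerate all short $\Sigma$-patterns whose exclusion rules out simultaneously local failures of quasigeodesicity and vertex backtracking, and in the latter case a vertex-backtracking event can in principle be produced by several consecutive letters from $\Sigma \setminus \cH$ bridging two visits to the same coset of some $H_i$. Bounding the length of such a bridge uniformly is where the ability to control geodesics inside each $H_i$ --- supplied by the free-abelian (or at least biautomatic) hypothesis on the parabolics --- is used in an essential way.
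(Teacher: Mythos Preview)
The paper does not prove this proposition: it is stated with an explicit citation to \cite[Lemma 5.3, Theorems 7.6, 7.7]{AC} and no argument is given. It is quoted as a black box from Antol{\'\i}n--Ciobanu, so there is no ``paper's own proof'' to compare your attempt against.

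That said, your sketch has a genuine gap that would prevent it from going through as written. In your argument for (ii) you want to apply the local-to-global principle in $\widehat{\Gamma}$, which requires that every subpath of $\widehat{w}$ of $\widehat{\Sigma}$-length at most $L$ be a $(\lambda_0,c_0)$-quasigeodesic. You propose to guarantee this by forbidding non-geodesic $\Sigma$-subwords of length at most some $N$. But a subpath of $\widehat{w}$ of $\widehat{\Sigma}$-length at most $L$ can correspond to a subword of $w$ of \emph{arbitrary} $\Sigma$-length, since parabolic components are unbounded in $\Sigma$; your finite set $\Phi$ of short $\Sigma$-words cannot see such configurations. Conversely, even when a subword of $w$ is $\Sigma$-geodesic, that does not by itself make its derived word a $(\lambda_0,c_0)$-quasigeodesic in $\widehat{\Gamma}$: the inequality $d_{\widehat{\Gamma}}\le d_\Gamma$ goes the wrong way for this purpose. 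The actual argument in \cite{AC} handles exactly this mismatch between the two metrics and is where most of the work lies; it is not a routine consequence of Proposition~\ref{prop:bcpp} plus local-to-global.
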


For the remainder of this paper, given a relatively
hyperbolic group $G$ and its finite generating set $\Sigma$,
we shall say that $(G,\Sigma)$ is
{\em suitable for parabolic geodesic biautomaticity} if 
\begin{mylist}
\item[(i)] $H_i \cap H_j = \{1\}$ for all $i \ne j$;
\item[(ii)] each parabolic subgroup $H_i$ has a geodesic biautomatic structure
over $\Sigma \cap H_i$, and there exist $\lambda$ and $c$ such that
the conclusions (i) and (ii) of Proposition \ref{prop:genset} hold for
$G$ and $\Sigma$.
\end{mylist}

So this applies in particular when the parabolic subgroups $H_i$ are free
abelian, as in the hypothesis of our main result.

We are now in a position to state and prove a corollary to the above proposition.

\begin{corollary}\label{cor:genset}
Let $(G,\Sigma)$ be relatively hyperbolic and
suitable for parabolic geodesic biautomaticity.
If  $w \in \Sigma^*$ is a geodesic word
that represents an element of $H_i$ for some $i$, then
$w \in (\Sigma \cap H_i)^*$. In particular, we have
$H_i = \langle \Sigma \cap H_i \rangle$ for each $i$.
\end{corollary}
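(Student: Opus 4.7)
The plan is to read the conclusion off the ``no vertex backtracking'' clause of Proposition~\ref{prop:genset}(ii) applied to $\widehat{w}$. First I would verify that a $\Sigma$-geodesic $w$ satisfies the hypotheses of that clause: every subword of a geodesic word is itself geodesic, which simultaneously shows that $w$ has no parabolic shortenings (each component is a subword of $w$, hence $\Gamma$-geodesic) and that $w$ contains no subword in $\Phi$ (since $\Phi$ consists of non-geodesic words). Proposition~\ref{prop:genset}(ii) then gives that $\widehat{w}$ is a $(\lambda,c)$-quasigeodesic in $\widehat{\Gamma}$ without vertex backtracking.

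The key observation is that $\widehat{w}$ itself is a path in $\widehat{\Gamma}$ labelled by a word representing $h \in H_i$. If the $\widehat{\Gamma}$-length of $\widehat{w}$ were strictly greater than $1$, then $\widehat{w}$ would be a subpath of itself of length $>1$ whose label represents an element of $H_i$, which is precisely a vertex backtracking, contradicting Proposition~\ref{prop:genset}(ii). Hence $\widehat{w}$ has $\widehat{\Gamma}$-length at most~$1$.

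A short case analysis then concludes the proof. If $\widehat{w}$ is empty then $w = \emptyword \in (\Sigma\cap H_i)^*$. Otherwise $\widehat{w}$ consists of a single edge. If that edge is a non-parabolic $\Sigma$-letter $x$, then $w = x$, and $x =_G h \in H_i$ forces $x \in \Sigma\cap H_i$, contradicting the non-parabolicity of $x$. Hence $\widehat{w}$ is a single parabolic edge, so $w$ is itself a single component lying in $(\Sigma\cap H_{i_1})^*$ for some $i_1$, with $h \in H_i \cap H_{i_1}$. The only slightly delicate step is ruling out $i_1\neq i$: for that I would invoke the standard consequence of Osin's setup that distinct parabolic subgroups intersect trivially (immediate in the free-abelian setting of the main theorem, since finite subgroups of free abelian groups are trivial), which forces $h = 1$ and hence $w = \emptyword$, contradicting the non-emptiness of a component. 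Thus $w \in (\Sigma\cap H_i)^*$. The ``in particular'' statement follows at once by applying the first part to a $\Sigma$-geodesic representative of each element of $H_i$.
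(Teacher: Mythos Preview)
Your proof is correct and follows essentially the same route as the paper's: both verify that a $\Sigma$-geodesic $w$ satisfies the hypotheses of Proposition~\ref{prop:genset}(ii) and then use the no-vertex-backtracking clause to force $|\widehat{w}| \le 1$. You spell out the case analysis for $|\widehat{w}|\le 1$ more carefully than the paper does, in particular flagging the possibility that $w$ lands in $(\Sigma\cap H_{i_1})^*$ for some $i_1\neq i$; the paper passes over this silently, and your resolution via almost malnormality (trivial intersection when the parabolics are torsion-free) is the natural fix.
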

\begin{proof}
Since $w$ is geodesic, it cannot contain subwords in $\Phi$ or have parabolic
shortenings.  The fact that $\hat{w}$ is without vertex backtracking
implies that $|\hat{w}|=1$ and hence that $w$ is a word over $\Sigma \cap
H_i$.  Since every element of $H_i$ can be represented by some geodesic word $w$,
$H_i$ is generated by $\Sigma \cap H_i$.
\end{proof}

\begin{lemma} \label{lem:qgeo}
Let $(G,\Sigma)$ be relatively hyperbolic and
suitable for parabolic geodesic biautomaticity, and
let $w \in \Sigma^*$ be a geodesic word.
Then there exists a constant $\lambda \ge 1$ such that
$\hat{w}$ labels a $(\lambda,0)$--quasigeodesic path in $\widehat{\Gamma}$
that does not vertex backtrack.
\end{lemma}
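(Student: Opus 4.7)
The plan is to apply Proposition~\ref{prop:genset}(ii) to $w$, obtaining that $\widehat{w}$ labels a $(\lambda_0, c_0)$--quasigeodesic without vertex backtracking for some $\lambda_0 \ge 1$ and $c_0 \ge 0$, and then to absorb the additive constant into the multiplicative constant using the no-vertex-backtracking property to reach a $(\lambda, 0)$--quasigeodesic.

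First I would verify the two hypotheses of Proposition~\ref{prop:genset}(ii). Since $w$ is geodesic, it can have no parabolic shortenings: if some component of $w$ in $(\Sigma \cap H_i)^*$ were non-geodesic, then (using Corollary~\ref{cor:genset} to see that $H_i$ is generated by $\Sigma \cap H_i$) we could replace that component by a strictly shorter word in $(\Sigma \cap H_i)^*$ representing the same element of $H_i$, yielding a strictly shorter representative of the same element of $G$, a contradiction. Similarly $w$ contains no subword from the finite set $\Phi$, because every element of $\Phi$ is non-geodesic and every subword of a geodesic word is itself geodesic. So Proposition~\ref{prop:genset}(ii) applies and produces constants $\lambda_0, c_0$ with the claimed quasigeodesic and no-vertex-backtracking properties for $\widehat{w}$.

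To remove the additive constant, I would argue as follows. Consider any two vertices on the path labelled by $\widehat{w}$, separated by $L$ edges along the path and at distance $D$ in $\widehat{\Gamma}$. The $(\lambda_0, c_0)$--quasigeodesic property gives an inequality of the form $L \le \lambda_0 D + C$ for a constant $C$ depending only on $\lambda_0, c_0$. If $D \ge 1$ this yields $L \le (\lambda_0 + C) D$. If $D = 0$ the connecting subpath is a loop whose label represents the identity of $G$, and since the identity lies in every parabolic subgroup $H_i$, the no-vertex-backtracking property forces the length of this subpath to be at most $1$; length $1$ is excluded because the labels of edges of $\widehat{\Gamma}$ lie in $\widehat{\Sigma}$, all of whose elements are non-identity, so $L = 0$. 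Setting $\lambda := \lambda_0 + C$ then gives the required $(\lambda, 0)$--quasigeodesic property, and the no-vertex-backtracking property is inherited directly from Proposition~\ref{prop:genset}(ii).

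I do not anticipate any serious obstacle: both hypotheses of Proposition~\ref{prop:genset}(ii) are immediate from $w$ being geodesic, and the removal of the additive constant is a short calculation whose only nontrivial input is the no-vertex-backtracking property already supplied by the proposition.
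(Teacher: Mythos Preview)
Your proof is correct and follows the paper's approach: invoke Proposition~\ref{prop:genset}(ii) (whose hypotheses hold because subwords of a geodesic are geodesic) to obtain a $(\lambda_0,c_0)$--quasigeodesic without vertex backtracking, then absorb the additive constant into the multiplicative one. The only difference is in how the case $D=0$ is dispatched. The paper argues directly from geodesicity of $w$: every subword of $w$ is geodesic, so no nonempty subword of $\widehat w$ can represent $1_G$, and hence $D=0$ forces $L=0$. You instead route this through the no-vertex-backtracking clause together with $1_G\in H_i$, which tacitly needs at least one parabolic subgroup and that no letter of $\widehat w$ represents $1_G$; both are harmless (the latter follows anyway from $w$ being geodesic), but the paper's route is slightly cleaner and avoids these side conditions. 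Your invocation of Corollary~\ref{cor:genset} when checking ``no parabolic shortenings'' is also unnecessary: a non-geodesic component can be shortened over $\Sigma$, which already contradicts geodesicity of $w$.
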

\begin{proof}
It follows from Proposition~\ref{prop:genset} that $\hat{w}$ labels
a  $(\lambda,c)$--quasigeodesic path for some $\lambda \ge 1$ and $c \ge 0$
and, since $w$ is geodesic, it cannot represent $1_G$ unless $w =\emptyword$,
and so by increasing $\lambda$ if necessary we may assume that $c=0$.
\end{proof}

Unfortunately, the biautomatic structure for $G$ that is given by
Proposition \ref{prop:genset} does not appear to have all of the properties
that we need in the proofs of our main results. For that we need a structure in
which $\hat{w}$ is a geodesic for words $w$ in the language. Since no such
structure appears in the literature, we need to establish its existence
here. It turns out that this structure could be asynchronous,
but that will be adequate for our purposes.

\begin{proposition}\label{prop:ftqd}
Let $(G,\Sigma)$ be relatively hyperbolic and
suitable for parabolic geodesic biautomaticity, and
let $\Gamma:=\Gamma(G,\Sigma)$,
$\widehat{\Gamma}:=\widehat{\Gamma}(G,\widehat{\Sigma})$.
Let $u,v, w_1,w_2$ be words over $\Sigma$ satisfying $w_1u = _G vw_2$,
with $|w_1|, |w_2| \le k$ for some $k\geq 0$
and, in quadrilaterals in $\Gamma$ and $\widehat{\Gamma}$ whose sides
are labelled by the words in that equation,
let $p$ and $\hat{p}$ be paths (in $\Gamma$ and  $\widehat{\Gamma}$) labelled by $u$ and $\hat{u}$,
and let $q$ and $\hat{q}$ be the paths labelled by $v$ and $\hat{v}$.

Suppose that
\begin{mylist}
\item[(a)] for each parabolic subgroup $H_i$, all $H_i$-components of both
$u$ and $v$ lie in the specified geodesic biautomatic structure;
\item[(b)] for some $\lambda \ge 1$ and $c \ge 0$,
the paths $\hat{p}$ and $\hat{q}$
are $(\lambda,c)$-quasigeodesics that do not backtrack.
\end{mylist}
Then
\begin{mylist}
\item[(i)] there is a constant $\ee' = \ee'(\lambda,c,k)$ such
that the paths $p$ and $q$ $\ee'$-fellow travel in $\Gamma$, 
in such a way that those vertices of $p$ that are also vertices of $\hat{p}$
have at least one corresponding vertex on $q$ that is also a vertex of
$\hat{q}$, and vice versa;
\item[(ii)] there is a constant $k' = k'(\lambda,c,k)$ such that, whenever two
vertices $b_1$ and $b_2$ on $q$ (or $p$) are both at $\Gamma$-distance at most
$\ee'$ from the same vertex on $p$ (or $q$), then the distance in the path $q$
(or $p$) between $b_1$ and $b_2$ is at most $k'$;
\item[(iii)]  we have $|u| \le (k'+1)|v|$ and $|v| \le (k'+1)|u|$.
\end{mylist}
\end{proposition}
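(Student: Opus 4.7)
The plan is to lift information from $\widehat{\Gamma}$, where Proposition~\ref{prop:bcpp} (the Bounded Coset Penetration Property) controls the pair $\hat{p},\hat{q}$, back to $\Gamma$, using the biautomatic fellow-traveller property of the parabolic subgroups $H_i$ to handle the interior of the $H_i$-components. Since $|w_1|,|w_2|\le k$, the paths $\hat{p}$ and $\hat{q}$ are $k$-similar; together with hypothesis~(b) this invokes Proposition~\ref{prop:bcpp}, giving a constant $\ee=\ee(\lambda,c,k)$ for which (1)~vertices of $\hat{p}$ and $\hat{q}$ lie in the closed $\ee$-neighbourhoods of each other in $\Gamma$, (2)~each $H_i$-component of $\hat{p}$ of $\Gamma$-displacement exceeding $\ee$ is connected to an $H_i$-component of $\hat{q}$ (and vice versa), and (3)~connected components are $\ee$-similar.

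For part~(i), I would build the asynchronous correspondence between $p$ and $q$ piecewise along the spines $\hat{p}$ and $\hat{q}$. Each vertex of $p$ that also lies on $\hat{p}$ is matched via~(1) to a vertex of $q$ lying on $\hat{q}$ at $\Gamma$-distance $\le \ee$, and this matching can be chosen monotonic in the orders along $\hat{p}$ and $\hat{q}$ because both are non-backtracking quasigeodesics with $k$-similar endpoints. For each matched pair of long $H_i$-components $s\subseteq\hat{p}$ and $t\subseteq\hat{q}$, their labels $u_i,v_j\in(\Sigma\cap H_i)^*$ lie in the specified geodesic biautomatic structure on $H_i$ by hypothesis~(a); the $\ee$-similarity from~(3), together with Corollary~\ref{cor:genset}, gives words $x,y\in(\Sigma\cap H_i)^*$ of length at most $\ee$ with $xu_i=v_jy$ in $H_i$, and a bounded iteration of the biautomatic fellow-traveller property in $H_i$ forces $s$ and $t$ to synchronously $K$-fellow travel in $\Gamma$ for a constant $K$ depending only on $\ee$ and the biautomatic constant of $H_i$. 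Non-component $\Sigma$-edges of $\hat{p}$ and $\hat{q}$ are single $\Gamma$-edges, and short components (of $\hat{p}$- or $\hat{q}$-displacement $\le\ee$) have length at most $\ee$ in $p$ or $q$ by Corollary~\ref{cor:genset}; both are absorbed as bounded asynchronous pauses on one side or the other. Setting $\ee'$ to be the maximum of $\ee$, $K$ and the resulting absorption constants then yields~(i), with the shared-vertex property holding by construction.

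For~(ii), which I expect to be the main obstacle, I would proceed as follows. Given $b_1,b_2$ on $q$ with $d_\Gamma(a,b_i)\le\ee'$, let $b_i^*$ be a nearest-along-$q$ vertex of $\hat{q}$ to $b_i$; inside any long $H_i$-component of $q$ containing $b_i$ the synchronous $K$-fellow traveller from~(i) combined with the presence of $a$ bounds $d_\Gamma(b_i,b_i^*)$, while short components contribute at most $\ee$. Hence $d_{\widehat{\Gamma}}(b_1^*,b_2^*)$ is bounded in terms of $\ee'$, $\ee$ and $K$, so by the $(\lambda,c)$-quasigeodesic property of $\hat{q}$ there are only boundedly many $\hat{q}$-edges between $b_1^*$ and $b_2^*$. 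Each contributes $1$ to the $q$-length if it is a $\Sigma$-edge, at most $\ee$ if it is a short component, and for a long component of $\hat{q}$ BCPP~(2) supplies a matched $\hat{p}$-component which, by Gromov-hyperbolicity of $\widehat{\Gamma}$ combined with the closeness of $b_1^*,b_2^*$ to $a$, must itself lie in a bounded $\Gamma$-neighbourhood of $a$; the synchronous biautomatic fellow-traveller from~(i) then bounds its $q$-length contribution. Summing these bounded contributions yields the required $k'=k'(\lambda,c,k)$. The delicate point, and the main obstacle, is precisely this position argument forcing the $\hat{p}$-matches of long $\hat{q}$-components between $b_1$ and $b_2$ to sit $\Gamma$-near $a$.
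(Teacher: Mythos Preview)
Your outline for part~(i) is in the right spirit---decompose along long components, use BCPP for the spine and biautomaticity inside matched long components---but the step ``both are absorbed as bounded asynchronous pauses'' hides the real work. Between two consecutive long components of $u$ there can be arbitrarily many $\hat{p}$-edges (short components and $\Sigma$-edges), and BCPP(1) only tells you that each $\hat{p}$-vertex is $\Gamma$-close to \emph{some} $\hat{q}$-vertex, not that a monotone matching exists nor that consecutive matched $\hat{q}$-vertices are close along $q$. The paper closes this gap with a different mechanism: it observes that, once long components are stripped out, the remaining subpaths $p[j_{r-1}:i_r)$ and $q[l_{r-1}:k_r)$ have \emph{all} components of bounded $\Gamma$-length, hence are $(\lambda',c')$-quasigeodesics in $\Gamma$ itself (not just in $\widehat{\Gamma}$) with $\ee_1$-similar endpoints and lying in bounded $\Gamma$-neighbourhoods of one another; then the argument of \cite[Proposition~3.1]{HR} gives fellow travelling. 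You need some such ``these in-between segments are honest $\Gamma$-quasigeodesics'' observation; without it the pause-absorption picture is not justified.

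For part~(ii), your direct attack runs into exactly the obstacle you flag, and I do not see how to finish it: if $b_i$ lies deep inside a long $H_j$-component $t$ of $q$, there is no a priori bound on $d_q(b_i,b_i^*)=d_\Gamma(b_i,t_\pm)$, and the biautomatic fellow travelling with the matched $\hat{p}$-component does not supply one (it relates $b_i$ to a vertex on $p$, not to an endpoint of $t$). The paper sidesteps this entirely with a neat trick: take a $\Sigma$-geodesic word $v'$ with $v'=_G v$, apply part~(i) to the pair $(v,v')$ (this is legitimate since $\widehat{v'}$ is quasigeodesic by Lemma~\ref{lem:qgeo}), and transfer $b_1,b_2$ to vertices $b_1',b_2'$ on the geodesic path $q'$. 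Now $d_{q'}(b_1',b_2')=d_\Gamma(b_1',b_2')$ is bounded, and every vertex of $q$ between $b_1$ and $b_2$ is $L_3$-close to this short subpath of $q'$, which bounds the $\Gamma$-diameter of that portion of $q$; since components of $q$ are $\Sigma_i$-geodesics, this bounds their lengths, making the subpath of $q$ a $\Gamma$-quasigeodesic and yielding~$k'$. The comparison with a genuine $\Gamma$-geodesic is the missing idea.
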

\begin{proof}
Applying Proposition~\ref{prop:bcpp} to the paths $\hat{p}$ and $\hat{q}$, we 
choose $\ee_1 \geq \ee(\lambda,c,k)$ such that also $\ee_1 \ge k$, 
and then choose $\ee_2 \geq \ee(\lambda,c,\ee_1)$ such that $\ee_2 \ge \ee_1$.
For a subword $u[i:j)$ of $u$, we denote by $p[i:j)$ the subpath of
$p$ labelled by $u[i:j)$.  So, if $u[i:j)$ is a component of $u$, then
$p[i:j)$ is a component of $p$.

Let $u[i_1:j_1)$,\,$u[i_2:j_2),$\,$\ldots,$\,$ u[i_t:j_t)$ with
$i_1 < i_2 < \cdots < i_t$ be the components of $u$ of length greater that
$\ee_2$, and let the corresponding $\ee_1$-similar components of $v$
to which they are connected be
$v[k_1:l_1)$,\,$v[k_2:l_2)$,\,$\ldots$,\,$v[k_t:l_t)$.
(Since $\hat{v}$ does not backtrack, these components are uniquely defined.)

\begin{figure}
\begin{picture}(340,85)(-25,-20)
\put(0,15){\circle*{4}}
\put(0,35){\circle*{4}}
\put(25,50){\circle*{4}}
\put(25,0){\circle*{4}}
\put(85,50){\circle*{4}}
\put(85,0){\circle*{4}}
\put(105,50){\circle*{4}}
\put(105,0){\circle*{4}}
\put(165,50){\circle*{4}}
\put(165,0){\circle*{4}}
\put(185,50){\circle*{2}}
\put(195,50){\circle*{2}}
\put(205,50){\circle*{2}}
\put(185,0){\circle*{2}}
\put(195,0){\circle*{2}}
\put(205,0){\circle*{2}}
\put(225,50){\circle*{4}}
\put(225,0){\circle*{4}}
\put(285,50){\circle*{4}}
\put(285,0){\circle*{4}}
\put(310,35){\circle*{4}}
\put(310,15){\circle*{4}}

\put(20,-8){\vector(1,0){137}}
\put(20,27){\oval(40,70)[bl]}
\put(290,15){\oval(40,45)[br]}
\put(155,-8){\line(1,0){136}}
\put(155,-16){$u$}
\put(20,58){\vector(1,0){137}}
\put(20,28){\oval(40,60)[tl]}
\put(155,58){\line(1,0){136}}
\put(290,35){\oval(40,45)[tr]}
\put(155,63){$v$}

\put(0,15){\vector(5,-3){15}}
\put(13,7.1){\line(5,-3){12}}
\put(0,35){\vector(5,3){15}}
\put(13,42.9){\line(5,3){12}}
\put(25,50){\vector(1,0){32}}
\put(55,50){\line(1,0){30}}
\put(25,0){\vector(1,0){32}}
\put(55,0){\line(1,0){30}}
\put(85,50){\vector(1,0){12}}
\put(95,50){\line(1,0){10}}
\put(85,0){\vector(1,0){12}}
\put(95,0){\line(1,0){10}}
\put(105,50){\vector(1,0){32}}
\put(135,50){\line(1,0){30}}
\put(105,0){\vector(1,0){32}}
\put(135,0){\line(1,0){30}}
\put(165,50){\line(1,0){10}}
\put(165,0){\line(1,0){10}}
\put(215,50){\line(1,0){10}}
\put(215,0){\line(1,0){10}}
\put(225,50){\vector(1,0){32}}
\put(255,50){\line(1,0){30}}
\put(225,0){\vector(1,0){32}}
\put(255,0){\line(1,0){30}}
\put(285,50){\vector(5,-3){15}}
\put(298,42.1){\line(5,-3){12}}
\put(285,0){\vector(5,3){15}}
\put(298,7.9){\line(5,3){12}}

\put(35,40){$v[k_1:l_1)$}
\put(115,40){$v[k_2:l_2)$}
\put(235,40){$v[k_t:l_t)$}
\put(35,4){$u[i_1:j_1)$}
\put(115,4){$u[i_2:j_2)$}
\put(235,4){$u[i_t:j_t)$}

\put(0,35){\vector(0,-1){12}}
\put(0,25){\line(0,-1){10}}
\put(25,50){\vector(0,-1){27}}
\put(25,25){\line(0,-1){25}}
\put(85,50){\vector(0,-1){27}}
\put(85,25){\line(0,-1){25}}
\put(105,50){\vector(0,-1){27}}
\put(105,25){\line(0,-1){25}}
\put(165,50){\vector(0,-1){27}}
\put(165,25){\line(0,-1){25}}
\put(225,50){\vector(0,-1){27}}
\put(225,25){\line(0,-1){25}}
\put(285,50){\vector(0,-1){27}}
\put(285,25){\line(0,-1){25}}
\put(310,35){\vector(0,-1){12}}
\put(310,25){\line(0,-1){10}}

\put(-15,23){$w_1$}
\put(27,23){\footnotesize{$\leqslant\!\ee_1$}}
\put(69,23){\footnotesize{$\leqslant\!\ee_1$}}
\put(107,23){\footnotesize{$\leqslant\!\ee_1$}}
\put(149,23){\footnotesize{$\leqslant\!\ee_1$}}
\put(227,23){\footnotesize{$\leqslant\!\ee_1$}}
\put(269,23){\footnotesize{$\leqslant\!\ee_1$}}
\put(313,23){$w_2$}

\end{picture}
\caption{Fellow travelling quasigeodesics}
\label{fig:redshort}
\end{figure}
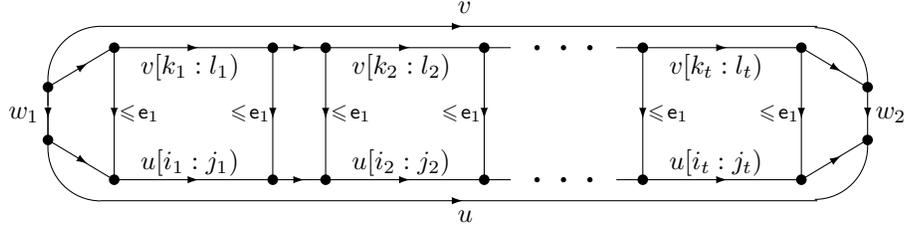

We claim that $k_1 < k_2 < \cdots < k_t$; that is, the connected components on
$v$ occur in the same order as their corresponding components on $u$.
We shall prove by induction on $r$ that $k_1 < k_2 <\cdots < k_r$ for all
$1 \le r \le t$. It is vacuously true for $i=1$, so suppose that it is true for
some value of $r$.  Since $\ee_1 \ge k$, the subpaths $\widehat{p[j_r:)}$
and $\widehat{q[l_r:)}$ of $\hat{p}$ and $\hat{q}$ are $\ee_1$-similar
$(\lambda,c)$-quasigeodesics that do not backtrack and, by choice of
$\ee_2$, the component $u[i_{r+1}:j_{r+1})$ of $u$ is connected to
a component of $v[l_r:)$. But since $\hat{v}$ does not backtrack, this
component must be $v[k_{r+1}:l_{r+1})$, so $k_r < l_r \le k_{r+1}$, which
completes the induction and establishes the claim.
See Figure \ref{fig:redshort}.

Since the component subwords of $u$ and $v$ lie in a geodesic biautomatic
structure, the paths $p[i_r:j_r)$ and $q[i_r:j_r)$ must $\ee_1L_1$-fellow
travel for $1 \le r \le t$, where $L_1$ is the maximum of the fellow
travelling constants for the biautomatic structures of the
parabolic subgroups $H_i$.

On the other hand, the subpaths $\widehat{p'}$ and $\widehat{q'}$ of
$\hat{p}$ and $\hat{q}$, where $p':=p[j_{r-1}:i_r)$ and
$q':=q[l_{r-1}:k_r)$, are
$\ee_1$-similar for $1 \le r \le t+1$ (where for convenience we define
$j_0=k_0=0$, $i_{t+1} = |u|$ and $k_{t+1} = |v|$),
so they are contained in closed $\ee_2$-neighborhoods of each other.
Now the components of $u[j_{r-1}:i_r)$ have length at most
$\ee_2$, and those of $v[l_{r-1}:k_r)$ have length at most
$\ee_2' :=\ee_2+ 2\ee_1$ (since otherwise they would be
connected to a component of $u[j_{r-1}:i_r)$, which would have length greater
than $\ee_2$). So the paths $p[j_{r-1}:i_r)$ and $q[l_{r-1}:k_r)$
are $(\lambda\ee_2',c\ee_2')$-quasigeodesics that
lie within $(\ee_2+\ee_2')$-neighborhoods of each other.
By the argument used in the proof of \cite[Proposition 3.1]{HR}, it follows
that these paths $L_2$-fellow travel for some constant $L_2$ that depends
only on $\lambda$, $k$ and $c$.
Furthermore, by increasing $L_2$ by at most $\ee_2$ we can ensure that
all vertices on either of these paths that are also vertices of $\hat{p}$
or $\hat{q}$ have at least one corresponding vertex with the same
property. So we have proved (i) with $\ee' = \max(\ee_1 L_1,L_2)$.

Now let $b_1$ and $b_2$ be vertices on $q$ as in (ii), and assume that
$b_2$ is further along $q$ than $b_1$.
Then $d_\Gamma(b_1,b_2) \le 2 \ee'$.
Let $v' \in \Sigma^*$ be a geodesic word with $v' =_G v$, and let
$q'$ and $\widehat{q'}$ be the paths in $\Gamma$ and $\widehat{\Gamma}$
labelled by $v'$ and $\widehat{v'}$, respectively.
Then $\widehat{q'}$ is quasigeodesic by Lemma~\ref{lem:qgeo},
and by applying (i) to $v$ and $v'$ with suitable constants, we
find that $v$ and $v'$ $L_3$-fellow travel for some constant
$L_3$. Let $b_1'$ and $b_2'$ be vertices of $q$ that
correspond to $b_1$ and $b_2$ in the fellow-travelling.  Then
$d_\Gamma(b_1',b_2') = d_{q'}(b_1',b_2') \le 2(\ee' + L_3)$.
Now any two vertices $c_1$ and $c_2$ of $q$ that lie between $b_1$ and
$b_2$ are each at distance at most $L_3$ from some vertex of
$q'$ that lies between $b_1'$ and $b_2'$, so we have
$d_\Gamma(c_1,c_2) \le 2\ee' + 4 L_3$. So, in particular, since the
components of $q$ are geodesics in $\Gamma$, the subpath of $q$ between $b_1$
and $b_2$ cannot have any components of length greater than $2\ee' + 4 L_3$.
So this subpath of $q$ is a quasigeodesic for suitable constants,
and (ii) now follows.

We get a corresponding result with $p$ and $q$ interchanged, and if this
results in a larger constant $k'$, then we replace $k'$ by this larger value.
Now (iii) follows directly from (ii).
\end{proof}

\begin{corollary}\label{cor:wdfsa}
Let $(G,\Sigma)$ be relatively hyperbolic and
suitable for parabolic geodesic biautomaticity.
Let $g_1,g_2 \in G$ and let $\lambda \ge 1$ and $c \ge 0$. Then there
is a deterministic 2-tape finite state automaton
$\cA = \cA(\lambda,c,g_1,g_2)$ such that:
\begin{mylist}
\item[(i)] for all $u,v$ with $(u,v) \in L(\cA)$, we have $g_1u =_G vg_2$;
\item[(ii)] for all $u, v \in \Sigma^*$ with  $g_1u =_G vg_2$ that satisfy
hypotheses (a) and (b) in the statement of Proposition~\ref{prop:ftqd},
we have $(u,v) \in L(\cA)$. 
\end{mylist}
\end{corollary}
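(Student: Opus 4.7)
My plan is to realise $\cA$ as a deterministic asynchronous $2$-tape finite-state automaton whose states record the geometric discrepancy between the endpoints of the two partially read words, following the template of the standard construction of a $2$-tape automaton for an (asynchronous) (bi)automatic structure. First I fix geodesic words $w_1, w_2 \in \Sigma^*$ representing $g_1, g_2$, and set $k := \max(|w_1|, |w_2|)$. Applying Proposition~\ref{prop:ftqd} with the given $\lambda, c$ and this $k$ yields constants $\ee' := \ee'(\lambda,c,k)$ and $k' := k'(\lambda,c,k)$ that will govern the size of the state space.

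The states of $\cA$ are pairs $(d, n)$, where $d$ ranges over the (finite) set of elements of $G$ of $\Gamma$-length at most $\ee' + k$ and $n$ is an integer of absolute value at most $k'$ recording the current lag between the two reading heads, together with a sink state. The initial state is $(g_1, 0)$, and the accept states are those of the form $(g_2, n)$ in which both tapes have been exhausted. Transitions are rendered deterministic by a canonical tape-advancement rule: at state $(d,n)$, if $n$ has reached the bound $k'$ we must advance the $u$-tape (so a symbol $x \in \Sigma$ is read, producing successor state $(dx, n+1)$, with a transition to the sink whenever the length bound on $d$ is exceeded); if $n$ has reached $-k'$ we must advance the $v$-tape (reading $y$ produces $(y^{-1}d,n-1)$); and otherwise a fixed tiebreak rule selects a unique tape to advance.

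For (i), an immediate induction on the length of a run shows that at every reachable state the component $d$ equals $v[:j)^{-1} g_1 u[:i)$, where $i,j$ are the current head positions; the acceptance condition then forces $g_1 u =_G v g_2$. For (ii), given $(u,v)$ satisfying hypotheses (a) and (b) of Proposition~\ref{prop:ftqd}, part (i) of that proposition furnishes an asynchronous $\ee'$-fellow-travelling between the paths $p$ and $q$ in $\Gamma$, while part (ii) bounds, by $k'$, the lag between any two corresponding positions. Together these ensure that the unique run of $\cA$ on $(u,v)$ stays in the bounded state space and reaches an accept state when both tapes are exhausted.

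The principal obstacle is showing that the deterministic tape-advancement rule is in fact compatible with every input pair satisfying the hypotheses of Proposition~\ref{prop:ftqd}. Without an a priori bound on how far apart the reading heads may need to be in a valid asynchronous pairing, no deterministic rule with finite memory could serve simultaneously for all such $(u,v)$. It is precisely Proposition~\ref{prop:ftqd}(ii) that overcomes this: the constant $k'$ bounds the feasible lag, so whenever the canonical rule forces us to advance a particular tape this choice is always consistent with some asynchronous fellow-travelling pairing, and the induction that verifies (ii) then goes through.
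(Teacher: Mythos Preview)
Your overall strategy---recording the word difference $d = v[:j)^{-1} g_1 u[:i)$ together with a bounded lag counter $n$---is close in spirit to the paper's, but the determinisation step contains a real gap. The paper first builds the standard \emph{non-deterministic} word-difference automaton $\cA'$ (states are elements of $G$ of $\Sigma$-length at most $\ee'$, with $\epsilon$-transitions on either tape), and then determinises it by having $\cA$, after reading a prefix $u_1$ of $u$, advance the $v$-head to the \emph{longest} prefix $v_1$ with $|v_1^{-1}g_1u_1| \le \ee'$ while \emph{remembering all shorter feasible prefixes} within the window of size $k'$ guaranteed by Proposition~\ref{prop:ftqd}(ii). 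This is a subset-style construction: the state encodes a bounded collection of candidate differences, not a single one.

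Your automaton, by contrast, tracks a single pair $(d,n)$ and commits to one tape at each step via ``a fixed tiebreak rule''. The assertion that ``whenever the canonical rule forces us to advance a particular tape this choice is always consistent with some asynchronous fellow-travelling pairing'' is exactly the point that needs proof, and Proposition~\ref{prop:ftqd}(ii) does not supply it. That proposition bounds the \emph{width} of the set of feasible $v$-positions for a given $u$-position, but it says nothing about how that set moves as $i$ increases; in particular it does not rule out that advancing $u$ now keeps $d$ bounded yet leads to a dead end later, while advancing $v$ now would have succeeded. Different admissible pairs $(u,v)$ can require genuinely different advancement schedules, and a single deterministic rule with one-difference memory cannot serve them all---this is precisely why the paper retains the whole feasible window. (There is also a sign slip: at $n=k'$ you should be forced to advance the $v$-tape, decreasing $n$, and dually at $n=-k'$; as written both transitions push $n$ outside its range.)
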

\begin{proof}
Let $w_1,w_2 \in \Sigma^*$ be geodesic words defining $g_1$ and $g_2$, and
let $k = \max(|w_1|,|w_2|)$, and let $\ee' = \ee'(\lambda,c,k)$
and $k' = k'(\lambda,c,k)$ be the constants defined in
Proposition~\ref{prop:ftqd}\,(i) and (ii). We define a 2-tape
asynchronous {\em word-difference} automaton $\cA'$, following the
recipe of \cite[Definition 2.3.3]{ECHLPT}. The states $\cA'$ are elements of
$G$ of $\Sigma$-length at most $\ee'$, with start state $g_1$ and
single accepting state $g_2$, and transitions $g^{(x,y)} = x^{-1}gy$
for all $x,y \in \Sigma \cup \{\emptyword\}$ such that
$g$ and $x^{-1}gy$ are both states of $\cA'$. Then $\cA'$ clearly
satisfies (i) and it satisfies (ii) by Proposition~\ref{prop:ftqd}\,(i). 
But since $\cA'$ contains $\emptyword$-transitions, it is non-deterministic.

Proposition~\ref{prop:ftqd}\,(ii) enables us to define a
deterministic 2-tape automaton $\cA$ with $L(\cA) = L(\cA')$.
After reading a prefix $u_1$ of $u$, we know that the lengths
of the prefixes $v_1$ of $v$ for which $v_1^{-1}g_1u_1$ has $\Sigma$-length
at most $\ee'$ can differ by at most $k'$. So $\cA$ can read the
longest such prefix $v_1$ and remember all shorter such prefixes.
After reading each letter of $u_1$, $\cA$ need never read forward more than
$k'$ letters of $v_2$ to maintain this information. If at any time
there are no eligible prefixes $v_1$ of $v$ then $\cA$ stops and rejects
$(u,v)$. 
\end{proof}

\begin{proposition}\label{prop:automatic}
Let $(G,\Sigma)$ be relatively hyperbolic, and
suitable for parabolic geodesic biautomaticity.
Define $\cL$ to be the set of all words $u \in \Sigma^*$ such that
the derived word $\hat{u} \in \widehat{\Sigma}^*$ is geodesic
and such that, for each parabolic subgroup $H_i$, all $H_i$-components of $u$
lie in the specified geodesic biautomatic structure.
Then $\cL$ is the language of an asynchronous biautomatic structure for $G$. 

Furthermore, for any ordering of $\Sigma$ with associated lexicographical
ordering $\le_\lex$ of $\Sigma^*$, the language
$\cL_0 = \{  u \in \cL : u =_G v, v \in \cL \Rightarrow u \le_\lex v \}$
is an asynchronous biautomatic structure for $G$ with uniqueness.
\end{proposition}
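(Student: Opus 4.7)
My plan is to verify that $\cL$ satisfies the three requirements for an asynchronous biautomatic structure on $(G,\Sigma)$: (a) each $g \in G$ has a representative in $\cL$; (b) any pair $u,v \in \cL$ with $g_1 u =_G v g_2$ for $g_1,g_2 \in \{\emptyword\} \cup \Sigma$ asynchronously fellow-travels in $\Gamma$; and (c) $\cL$ is regular. Then $\cL_0$ will be obtained as the lex-least $\cL$-representative in each $=_G$-class. For (a), given $g \in G$ I choose a word $\hat v \in \widehat\Sigma^*$ labelling a geodesic from $1$ to $g$ in $\widehat\Gamma$. Because $\hat v$ is geodesic, no two consecutive letters of $\hat v$ represent elements of the same parabolic $H_i$: otherwise their product lies in $H_i$ and the two-edge subpath shortens to one $H_i$-edge (or cancels, if the product is trivial). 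I then replace each $\cH$-letter $h \in H_i$ of $\hat v$ by a fixed geodesic biautomatic representative $u_h \in (\Sigma \cap H_i)^*$ of $h$. The resulting $u \in \Sigma^*$ has maximal $H_i$-blocks equal to these $u_h$'s, so $\widehat u = \hat v$ is geodesic and every component lies in the specified parabolic structure, giving $u \in \cL$ with $u =_G g$.

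For (b), I apply Corollary~\ref{cor:wdfsa} with $\lambda = 1$ and $c = 0$. Hypothesis (a) of the corollary is immediate from the definition of $\cL$. For hypothesis (b), $\widehat u$ and $\widehat v$ are $(1,0)$-quasigeodesics since they are geodesics, and neither backtracks: a backtracking decomposition $\widehat u = u' s r s' u''$, with $s,s'$ connected $H_i$-components and $r$ labelling an element of $H_i$, would make the subword $srs'$ label an element of $H_i$, and so the whole subpath could be replaced by a single $H_i$-edge, contradicting geodesicity of $\widehat u$. Hence $(u,v) \in L(\cA(1,0,g_1,g_2))$, and the deterministic 2-tape automaton $\cA$ directly witnesses asynchronous fellow travel in $\Gamma$ at a uniform constant $\ee'(1,0,1)$. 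This handles the three cases $(g_1,g_2) \in \{(1,1),(1,x),(x,1)\}$ for $x \in \Sigma$ needed for biautomaticity.

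For (c), the ``components in the parabolic biautomatic structures'' condition cuts out a regular language $P \subseteq \Sigma^*$, and for $u \in P$ the integer $|\widehat u|$ is finite-state computable from the block structure of $u$. A $u \in P$ lies in $\cL$ iff $|\widehat u|$ is minimal among $\{|\widehat v| : v \in P,\ v =_G u\}$ (since replacing a component of any $v =_G u$ by its biautomatic representative preserves $\widehat v$), equivalently iff $|\widehat u| = d_{\widehat\Gamma}(1,u)$. I plan to construct a regular $\cL_{\mathrm{rep}} \subseteq P$ containing one representative $v_g$ per $g \in G$ with $|\widehat{v_g}| = d_{\widehat\Gamma}(1,g)$, by canonicalising the construction of (a) against the biautomatic structure of Proposition~\ref{prop:genset}; then $\cL$ is the first projection of the regular 2-tape relation $\{(u,v) \in L(\cA(1,0,1,1)) : v \in \cL_{\mathrm{rep}},\ |\widehat u| = |\widehat v|\}$. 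For $\cL_0$, the asynchronous fellow-travel constant from (b) confines the set of $\cL$-rivals of any fixed $u$ to a bounded-state region, so ``$u$ is lex-least in its $=_G$-class within $\cL$'' is a regular condition, and $\cL_0$ contains exactly one representative per element.

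The principal obstacle I foresee is the regular construction of $\cL_{\mathrm{rep}}$ in (c): geodesics in $\widehat\Gamma$ live over the infinite alphabet $\widehat\Sigma$, so a canonical selection is not obviously recognisable by a finite automaton over $\Sigma$. Circumventing this requires combining the parabolic biautomatic structures with a length-minimising constraint---comparing against the (quasigeodesic) biautomatic representatives from Proposition~\ref{prop:genset} via the word-difference automaton---in a way that avoids circularity between the definitions of $\cL$ and $\cL_{\mathrm{rep}}$.
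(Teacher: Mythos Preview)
Your parts (a) and (b) are essentially correct and align with the paper's approach: the paper also reduces the biautomaticity claim to regularity of $\cL$ via Corollary~\ref{cor:wdfsa}, and the surjectivity argument is the natural one. The genuine gap is in (c), and you have correctly located it yourself. You need a regular set $\cL_{\mathrm{rep}}$ of canonical $\widehat\Sigma$-geodesic representatives over $\Sigma$, but you do not actually construct it; ``canonicalising (a) against the biautomatic structure of Proposition~\ref{prop:genset}'' is only a slogan. The biautomatic structure in Proposition~\ref{prop:genset} consists of $\Sigma$-geodesics, for which $\widehat u$ is merely $(\lambda,c)$-quasigeodesic, not $\widehat\Sigma$-geodesic, so it does not directly supply the representatives you need, and the circularity you flag is real for your approach.

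The paper sidesteps the construction of $\cL_{\mathrm{rep}}$ entirely by a different device. It uses that $\cL$ is closed under prefixes consisting of complete components: hence if $u \in P \setminus \cL$ there is a \emph{shortest} complete-component prefix $u_1$ with $\widehat{u_1}$ non-geodesic. Since the maximal proper prefix of $\widehat{u_1}$ \emph{is} geodesic, $\widehat{u_1}$ is a $(1,2)$-quasigeodesic without backtracking. One therefore works with $\cA := \cA(1,2,1_G,1_G)$---note the constants $(1,2)$, not $(1,0)$---and modifies $\cA$ to track, along accepted prefixes $(u_2,v_2)$, the running difference $|\widehat{u_2}|-|\widehat{v_2}|$; this is bounded because the $\Sigma$-length (hence $\widehat\Sigma$-length) of the word-difference state is bounded. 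Then $u \in \cL$ iff $u \in P$ and no complete-component prefix $u_1$ admits a companion $v_1$ with $(u_1,v_1) \in L(\cA)$ and $|\widehat{u_1}|>|\widehat{v_1}|$; regularity follows from \cite[Lemma~7.1.5]{ECHLPT}. No auxiliary $\cL_{\mathrm{rep}}$ is needed, and the circularity disappears because the test is against \emph{all} $v_1$ accepted by $\cA$, not just those in $\cL$. For $\cL_0$ the paper simply invokes \cite[Theorem~7.3.2]{ECHLPT}, which already packages the ``lex-least is regular'' argument in the asynchronous setting.
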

\begin{proof}
If we can prove that $\cL$ is a regular language, then the
first claim will follow immediately from Corollary~\ref{cor:wdfsa}.
(The hypothesis that $\hat{u}$ is geodesic for $u \in \cL$ implies
that $\hat{u}$ does not backtrack.)
The required property that all $H_i$-components of words in $\cL$ lie
in the specified geodesic biautomatic structure is certainly testable by
a finite state automaton, so we may restrict our attention to
words that satisfy it. 

For all $u \in \cL$, we have $u_1 \in \cL$ for all prefixes $\widehat{u_1}$ of
$\hat{u}$.
So if $u \in \Sigma^*$ with $u \not\in \cL$, then $\hat{u}$ has a shortest
prefix $u_1$ such that $\widehat{u_1}$ is a prefix of $\hat{u}$
(i.e. such that $u_1$ is a union of complete components of $u$ and
subwords in $(\Sigma \setminus \cH)^*$) and $u_1 \not\in \cL$.
Since the maximal proper prefix of $\widehat{u_1}$ is geodesic,
$\widehat{u_1}$ labels a
$(1,2)$-quasigeodesic in $\widehat{\Gamma}$, and it is not
hard to see that $\widehat{u_1}$ cannot backtrack.
Let $v_1 \in \cL$ with $v_1 =_G u_1$. Then $(u_1,v_1) \in L(\cA)$, where
$\cA := \cA(1,2,1_G,1_G)$ as defined in Corollary~\ref{cor:wdfsa}.
Furthermore, if $(u_2,v_2)$ is a prefix of $(u_1,v_1)$ in an accepting path of
$(u_1,v_1)$ through $\cA$, then the $\Sigma$-length and hence
also the $\widehat{\Sigma}$-length of the element of $G$ defined by
$u_2^{-1}v_2$ is bounded. So $\cA$ can be modified to keep track of
the difference between the $\widehat{\Sigma}$-lengths of these
pairs of prefixes $(u_2,v_2)$, and hence it can detect that
$|\widehat{u_1}| > |\widehat{v_1}|$.

So a word $u \in \Sigma^*$ lies in $\cL$ if and only if its components satisfy
the required condition, and if it has no prefix $u_1$ consisting of a union of
complete components of $u$ and subwords in $(\Sigma \setminus \cH)^*$
for which there exists
$v_1$ with $(u_1,v_1) \in L(\cA)$ and $|\widehat{u_1}| > |\widehat{v_1}|$.
So $\cL$  is regular by \cite[Lemma 7.1.5]{ECHLPT}.

The final statement follows immediately from \cite[Theorem 7.3.2]{ECHLPT}
(together with its proof, in which  $\cL_0$ is defined as we have done
so here).
\end{proof}

\begin{proposition}\label{prop:nfwdlenbd}
There is a constant $\DD$ (depending on $G$ and $\Sigma$) such that
$|\nf(w)| \le \DD |w|$ for any $w \in \Sigma^*$.
\end{proposition}
\begin{proof} It is sufficient to prove this when $w$ is a geodesic word.
Then by Lemma~\ref{lem:qgeo} $\hat{w}$ is a $(\lambda,0)$-quasigeodesic
for some $\lambda \ge 1$, and the result follows by applying
Proposition~\ref{prop:ftqd}\,(iii) with $u=w$ and $v=\nf(w)$ and
$w_1=w_2 = \emptyword$.
\end{proof}

\begin{remark}\label{rem:subclosure}
The languages $\cL$ and $\cL_0$ are not necessarily closed under subwords,
but if $u \in \cL$ or $u \in \cL_0$, and $u_1$ is a subword that contains only
complete components of $u$ together with subwords in $\Sigma \setminus \cH)^*$, then $u_1 \in \cL$ or $\cL_0$.
\end{remark}

\begin{remark} It is not difficult to show that the paths in $\Gamma$ labelled
by words in $\cL$ and $\cL_0$ are quasigeodesics, but we shall not need that
property.
\end{remark}

\section{Background on straight line programs}
\label{sec:slp_background}

Let $\cG=(V,S,\rho)$ be a straight line program over an alphabet $\Sigma$.
For a variable $A \in V$,
the word $\rho(A)$ is called the {\em right-hand side} of $A$.
We define the {\em size} of $\cG$ to be the total length
of all right-hand sides: $|\cG| := \sum_{A \in V} |\rho(A)|$.

It 
follows from the acyclicity condition that
each variable $A$ in $V$ has a well defined {\em  height}, $\Ht(A)$,
namely the smallest positive integer $r$ for which $\rho^r(A) \in \Sigma^*$. 
By removing from $\cG$  all variables that do not occur in any image
$\rho^k(S)$ for $k \in \N$, we obtain (in 
time that is a polynomial function of $|\cG|$) an \slp in which 
$S$ is the only variable of maximal height.
We call such an \slp\ {\em trimmed};
we will often want to assume this property for an \slp.

Suppose that $A$ is a variable of height $r$ in $\cG$.
We  define $\val_{\cG}(A):=\rho^r(A) \in \Sigma^*$,
and observe that $\val_{\cG}(S)= \val(\cG)$.
We also define a (trimmed) \slp $\cG_A := (V_A,A,\rho_A)$ over $\Sigma$,
the {\em restriction} of $\cG$ to $A$,
with start variable $A$, set of variables $V_A$ consisting of all $B \in V$
that appear within $\rho^k(A)$ for some $k \geq 0$,
and map $\rho_A$ defined to be the restriction of $\rho$ to $V_A$.
We note that for any $B \in V_A$, $\val_\cG(B)=\val_{\cG_A}(B)$,
and in particular $\val(\cG_A) = \val_{\cG}(A)$.

If every right-hand side (except that of $\rho(S)=\emptyword$, if it
occurs) has the form $a \in \Sigma$ or $BC$ with $B,C \in V$
(that is, the grammar associated with $\cG$ is in Chomsky normal form),
then we shall say that $\cG$ itself is in {\em Chomsky normal form}. 
We shall often want to assume this property.

Given \slps $\cG_1$ and $\cG_2$ over a single alphabet $\Sigma$,
we denote by $\cG_1\cG_2$ the
\slp with $\val(\cG_1\cG_2)$ equal to the concatenation
$\val(\cG_1)\val(\cG_2)$,
which we derive from $\cG_1$ and  $\cG_2$ by adding to their disjoint union a
single variable $S_{\cG_1\cG_2}$ and the single production
$S_{\cG_1\cG_2}\rightarrow S_{\cG_1}S_{\cG_2}$.
We extend this definition to a concatenation of any number of \slps,
and also to a concatenation of \slps and words over $\Sigma$,
so that a concatenation of the form 
$\cG = u_0\cG_1u_1\cdots \cG_ku_k$, where each $\cG_i$ is a \slp and each $u_i$
a possibly empty word over $\Sigma$ is formed by the addition 
of a single production 
\[ S_\cG \rightarrow u_0S_{\cG_1}u_1S_{\cG_2}u_2\cdots S_{\cG_k}u_k \]
to the disjoint union of the productions of the \cslps $\cG_i$.

We will make use of a number of results from the literature,
which we collect together here as a single proposition:
\begin{proposition}
\label{prop:slp_results}
Let $\cG=(V,S,\rho)$ be a straight line program over a finite alphabet $\Sigma$.
\begin{mylist}
\item[(i)] An  algorithm exists that transforms $\cG$ 
into an \slp $\cG'$ in Chomsky normal form with $|\cG'| \le 2|\cG|$ and
$\val(\cG) = \val(\cG')$, in time that is a linear function of $|\cG|$; see for example \cite[Proposition 3.8]{Loh14}.
\item[(ii)] We have $|\val(\cG)| \leq  3^{|\cG|/3}$
   \cite[proof of Lemma 1]{CLLLPPSS05}.
\item[(iii)] The length $|\val(\cG)|$ can be computed in 
time that is a polynomial function of $|\cG|$
\cite[Proposition 3.9]{Loh14}.
\item[(iv)] For $0 \leq i \leq j \leq  |\val(\cG)|$,
an \slp with value $\val(\cG)[i:j)$ can be computed in 
time that is a polynomial function of $|\cG|$
\cite[Proposition 3.9]{Loh14}.
\item[(v)] 
Given a deterministic finite state automaton $M$ over the alphabet
$\Sigma$ and an \slp $\cG$ over the alphabet $\Sigma$, it can be determined in
time that is a polynomial function of $|\cG|$
whether $\val(\cG) \in L(M)$  \cite[Theorem 3.11]{Loh14}.
\item[(vi)]
Given two \slps $\cG$ and $\cH$, it can be checked in  
time that is a polynomial function of $|\cG|+|\cH|$
whether $\val(\cG) = \val(\cH)$ \cite{Pla94}.
\end{mylist}
\end{proposition}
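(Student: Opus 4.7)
The plan is to treat each of the six items separately, since the proposition is a compilation of standard results about \slps, each accompanied by a reference. For most items I would follow the cited source; what I outline here is the high-level structure of the argument I would write if I had to reproduce the proofs.

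For (i), I would give an explicit procedure: for every variable $A$ with $|\rho(A)| > 2$, say $\rho(A) = x_1 x_2 \cdots x_k$, introduce fresh variables $A_2, \ldots, A_{k-1}$ and replace the single production by the chain $A \to x_1 A_2$, $A_i \to x_i A_{i+1}$, $A_{k-1} \to x_{k-1} x_k$ (treating terminals occurring on the right of a binary production in the obvious way by inserting unit variables). This produces at most $\sum_A (|\rho(A)|-2)$ new variables, so the total work and the size of $\cG'$ are linear in $|\cG|$. For (ii), I would do strong induction on the number of variables, using the fact that in Chomsky normal form $|\val(A)| = |\val(B)| + |\val(C)|$ whenever $\rho(A) = BC$; the constant $3^{|\cG|/3}$ arises from maximising a product under a sum constraint (AM--GM). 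For (iii), once (i) is applied, one processes the variables in order of increasing height and stores $|\val(B)|$ for each $B$, a routine bottom-up DAG computation.

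For (iv), I would first put $\cG$ in Chomsky normal form and precompute all $|\val(B)|$. To extract $\val(\cG)[i{:}j)$, descend from $S$ recursively: at a production $B \to CD$ decide whether the window lies inside $\val(C)$, inside $\val(D)$, or straddles the boundary; in the third case we split into a suffix query on $C$ and a prefix query on $D$ and introduce two new variables to concatenate the answers. Each recursive call creates at most $O(\Ht(S))$ variables, and $\Ht(S)$ is bounded by $|\cG|$. For (v), the key observation is that for each variable $B$, the action of $\val(B)$ on the state set $Q$ of $M$ is a function $Q \to Q$; in Chomsky normal form these functions compose along productions $B \to CD$, so they can all be computed in time polynomial in $|\cG|$ and $|Q|$, after which membership in $L(M)$ is a single table lookup.

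The main obstacle, and the one I would not attempt to reprove, is (vi): Plandowski's polynomial-time equality test for \slps. The natural first step, checking $|\val(\cG)| = |\val(\cH)|$ via (iii), is trivial; the difficulty is that a naive descent through the two parse DAGs blows up exponentially. The real work is a delicate analysis based on the combinatorics of periodicity (Fine and Wilf) to show that equality of two composed SLP outputs can be certified by a polynomial number of ``alignment'' constraints, which is what makes the algorithm run in polynomial time. I would simply cite \cite{Pla94} here and, if needed in the sequel, use (vi) as a black box; writing out a new proof is beyond what this proposition requires.
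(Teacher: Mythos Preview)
Your proposal is correct, but note that the paper itself provides no proof whatsoever for this proposition: it is presented purely as a compilation of background results, with each item justified solely by a citation to the literature. So there is nothing to compare against; your sketches for (i)--(v) are standard and accurate, and your decision to treat (vi) as a black box via \cite{Pla94} matches exactly what the paper does.
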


\begin{definition}\label{def:root}
Let $\cG=(V,S,\rho)$ be an \slp over an alphabet $\Sigma$, with value
$w_1uw_2$.  We say that $u$
has a {\em root}, $A$  in $V$, if for some $k$ and $\ell=\Ht(S)-k$,
we have $\rho^k(S)=\alpha A \beta$, 
where $\alpha,\beta \in (V \cup \Sigma)^*$, 
$\rho^\ell(\alpha)=w_1$, $\rho^\ell(\beta)=w_2$, and $\rho^\ell(A)=u$.
\end{definition}

\subsection{ Extensions of \slps}\label{subsec:extend_slps}
It will sometimes be convenient in our proofs
to make use of particular extensions of \slps, namely cut-\slps and
tethered-\slps.

Cut-\slps (which we shall abbreviate as \cslps) are
defined analogously to
\slps, but in addition a cut-\slp may contain right-hand sides that are written
in the form $B[:i)$ or $B[i:)$ for a variable $B$ and an integer $i \geq 0$
\cite{Loh14}; \cslps are used in
the construction of a polynomial
time algorithm for the compressed word problem of a free group in
\cite{Loh06siam}, where they are called {\em composition systems}.
We call $[:i)$ and $[i:)$ cut operators.
It is convenient to allow cut operators of the form $B[i:j)$ for
$0 \le i < j$, which we can achieve as the combination of two cut operators
$B[:j)[i:)$.
If $\rho(A) = B[i:j)$ in a \cslp $\cG$, then we define
$\val_{\cG}(A)$ to be the string $\val_{\cG}(B)[i:j)$.

Given a \cslp $\cG$, we denote by $\cG [i:j)$ the \cslp with value
$\val(\cG)[i:j)$ that we derive from $\cG$ by adding to $\cG$ a single
variable $S_{\cG[i:j)}$ (the start variable of $\cG [i:j)$)
and the single production
$S_{\cG[i:j)}\rightarrow S_{\cG}$ to the \cslp $\cG$.

In a \cslp, as in a \slp, we require that the associated binary relation
is acyclic.
We define concatenations of \cslps analogously to concatenations of \slps.
The following results can be found in the literature;
the second follows from the first together with Proposition~\ref{prop:slp_results}\,(vi).\\

\begin{proposition}\label{prop:cslps}

\begin{mylist}
\item[(i)]
From a given \cslp $\cG$ one can compute, in 
time that is a polynomial function of $|\cG|$,
an \slp $\cG'$
such that $\val(\cG) = \val(\cG')$
\cite{Hag00}; see also \cite[Theorem 3.14]{Loh14}.
\item[(ii)] Given {\cslp}s $\cG_1$ and $\cG_2$,
it can be checked in polynomial time whether $\val(\cG_1) = \val(\cG_2)$
\end{mylist}
\end{proposition}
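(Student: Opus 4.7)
The plan is to prove (i) by eliminating cut operators and then to deduce (ii) immediately from (i) together with Proposition~\ref{prop:slp_results}(vi). Since both parts are classical, I only sketch the arguments.

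For (i), first normalise $\cG$ so that every production has the form $A \to a$ with $a \in \Sigma$, $A \to BC$ with $B,C \in V$, or $A \to B[i:j)$; this is a straightforward analogue of the Chomsky-normal-form transformation of Proposition~\ref{prop:slp_results}(i). Next, use Proposition~\ref{prop:slp_results}(iii) to precompute $|\val_\cG(B)|$ for every variable $B$. Then push cut operators downward through the grammar: given a rule $\rho(A) = B[i:j)$ with $\rho(B) = CD$, either $[i,j)$ lies entirely inside $\val_\cG(C)$, and we replace the rule by $A \to C[i:j)$; or it lies entirely inside $\val_\cG(D)$, and we shift the indices and replace by $A \to D[i-|\val_\cG(C)|:j-|\val_\cG(C)|)$; or it straddles the boundary, in which case we introduce fresh variables $A_1, A_2$ with rules $A_1 \to C[i:|\val_\cG(C)|)$ and $A_2 \to D[0:j-|\val_\cG(C)|)$ and set $\rho(A) = A_1 A_2$. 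Iterating pushes every cut towards the leaves, where cuts of the form $a[0:1)$ with $a \in \Sigma$ may simply be deleted. With careful memoisation, the number of distinct prefix and suffix cuts arising at any variable $B$ of the original $\cG$ is polynomially bounded, which keeps $|\cG'|$ polynomial in $|\cG|$.

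For (ii), apply (i) to $\cG_1$ and $\cG_2$ to obtain, in polynomial time, \slps $\cG_1'$ and $\cG_2'$ with $\val(\cG_i') = \val(\cG_i)$ for $i=1,2$, and then invoke Proposition~\ref{prop:slp_results}(vi) to test $\val(\cG_1') = \val(\cG_2')$ in polynomial time.

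The main obstacle in (i) is controlling the size blow-up: a naive recursive elimination of cut operators can double the number of variables at each level of the grammar DAG and produce an \slp of exponential size. The key combinatorial fact that keeps the output polynomial is that, along the push-down process, each original variable $B$ only needs to support polynomially many distinct prefix or suffix cuts of $\val_\cG(B)$, so dynamic programming over the DAG yields a polynomial-size \slp $\cG'$.
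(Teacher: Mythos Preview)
Your proposal is correct and matches the paper's treatment: the paper does not prove (i) at all but simply cites \cite{Hag00} and \cite[Theorem~3.14]{Loh14}, and it derives (ii) from (i) together with Proposition~\ref{prop:slp_results}(vi) exactly as you do. Your sketch of the cut-elimination procedure for (i) is the standard Hagenah argument underlying those citations, so you have supplied more detail than the paper itself.
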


As in \cite{HLS}, the proof of the main theorem also involves extensions
to \slps and \cslps that we call \tslps and \tcslps,
respectively (T stands for ``tethered'').
These extensions only make sense when the alphabet $\Sigma$ is the
(inverse closed) generating set of a group $G$, and when
$G$ is equipped with a normal form $\nf(w)$ for words $w \in \Sigma^*$;
that is, $\nf(w) \in \Sigma^*$, $\nf(w) =_G w$, and, for $v,w \in \Sigma^*$,
$v=_Gw \Rightarrow \nf(v) = \nf(w)$.

We extend the definition of a \slp or \cslp to that of a
\tslp or \tcslp over such an alphabet $\Sigma$
by allowing additional right-hand sides
that are expressions of the form $B\langle \alpha,\beta \rangle$
with  a variable $B$, and strings $\alpha,\beta$ over $\Sigma$, each of length
at most $\JJ$,
where  $\JJ = \JJ_\cT \in \N$ is a parameter of the \tslp or \tcslp $\cT$.
Given a right-hand side $\rho(A)=B\langle \alpha,\beta \rangle$ in a
\tslp or \tcslp $\cT$, we define
\[\val_{\cT}(A) := \nf(\alpha \, \val_{\cT}(B) \, \beta^{-1}).\]
In \tslps and  \tcslps, as with \slps and \cslps, we require that the
associated binary
relations are acyclic, and we define concatenations of \tslps and \tcslps
analogously to concatenations of \slps.

We define the {\em size} of a variable $A$ in a \tcslp $\cT$ to be the total
number of occurrences of symbols from $\Sigma \cup V$ in $\rho(A)$, and
the size of $\cT$ is obtained by taking the sum over all variables.

As we did for \slps, for a \tcslp $\cU$ over $\Sigma$, we
define the height, $\Ht(A)$, of a variable $A$ of $\cU$
to be the smallest integer $k$ such that $\rho_{\cU}^k(A) \in \Sigma^*$.
Proposition~\ref{prop:slp_results}\,(i) extends to \tcslps: for a
given \tcslp $\cU$  over $\Sigma$ with $\val(\cU) \ne \epsilon$, we can in
linear time construct a \tcslp $\cU'$ with the same value in
which $|\cU'| \le 2 |\cU|$ and all right hand sides $\rho_{\cU'}(A)$ that lie in
$(V_{\cU'} \cup \Sigma)^*$ have the
form $a \in \Sigma$ or $BC$ with $B,C \in V_{\cU'}$.

We say that the \tcslp (or \tslp) $\cT$ 
is {\em $\nf$-reduced} if for every variable
$A$ the word $\val_{\cT}(A)$ is $\nf$-reduced; that is
$\nf(\val_{\cT}(A)) = \val_{\cT}(A)$.

In Section~\ref{sec:convert} we shall prove Theorem~\ref{thm:convert}
that, given a \tcslp $\cT$
over a suitably chosen generating set $\Sigma$ of a group that is hyperbolic
relative to a collection of free abelian subgroups
(and which satisfies suitable conditions relating to
the `splitting of components', explained in Definition~\ref{def:split}),
we can, in time that is a polynomial function of $|\cT|$
(depending on $\JJ_\cT$),  
compute an \slp $\cG$ with $\val(\cT) = \val(\cG)$.
This result will be a vital component of our main result. Its proof will
require the following generalisation of
Proposition~\ref{prop:slp_results}\,(ii) to $\nf$-reduced \tcslp\,s.

\begin{proposition}\label{prop:tcslplenbd} Let $\cT=(V,S,\rho)$ be an
$\nf$-reduced \tcslp. Then there is a constant $\JJ'$ (depending on
$\Sigma$ and $\JJ_\cT$) such that
$|\val(\cT)| \leq  (\JJ')^{|\cT|}$
\end{proposition}
\begin{proof}
As we saw above, there is a \tcslp $\cT' = (V',S,\rho')$ with $\val(\cT') =
\val(\cT)$, $|\cT'| \le 2|\cT|$ in which all right hand sides
$\rho'(A)$ that lie in $(V' \cup \Sigma)^*$ have the
form $a \in \Sigma$ or $BC$ with $B,C \in V'$.
If $\rho(A) = B\langle \alpha,\beta \rangle$ with
$|\alpha|,|\beta| \le J_\cT$ then $\rho'(A) = \rho(A)$ and,
by \ref{prop:ftqd}\,(iii), we have
$|\val(A)| \le  (k'+1)\val(B)|$ with $k' = k'(1,0,\JJ_\cT)$.

We claim that $\val_{\cT'}(A) \le \max(2,k'+1)^{\Ht(A)}$ for all $A \in V'$.
Since $\Ht(S) \le |\cT'| \le 2|\cT|$ this will prove the proposition with
$\JJ' = \max(2,k'+1)^2$.

The proof of the claim is by induction on $\Ht(A)$, and the base case $\Ht(A)=0$
is clear. Otherwise, if $\rho'(A) = BC$ then $|\val_{\cT'}(A)| =
|\val_{\cT'}(B)|+|\val_{\cT'}(C)|$ with $\Ht(B),\Ht(C) \le \Ht(A)$;
if $\rho'(A) = B[i:)$ or $B[:j)$, then $|\val_{\cT'}(A)| \le |\val_{\cT'}(B)|$;
and if $\rho'(A) = B\langle \alpha,\beta \rangle$ with
$|\alpha|,|\beta| \le J_\cT$ then, as saw above,
$|\val_{\cT'}(A)| \le  (k'+1)|\val_{\cT'}(B)|$. In all three cases the claim
follows immediately from the inductive hypothesis.
\end{proof}

\section{Some constructions of \slps for abelian  groups}
\label{sec:slps_ab}

We need a few results about \slps for finitely generated abelian groups,
which will be the parabolic subgroups of our relatively hyperbolic groups.

\begin{lemma}
\label{lem:ab_polytime}
Let $G$ be a finitely generated abelian group with generating set 
$X =\{x_1^{\pm 1},\ldots,x_k^{\pm 1}\}$, and let $\cG$ be an \slp over $X$.
Then, in time that is polynomial in $|\cG|$, we can
\begin{mylist}
\item[(i)]
compute the vector $(n_1,n_2,\cdots,n_k)$,
defined by $\val(\cG) =_G \prod_{i=1}^t x_i^{n_i}$ (where the integers
$n_i$ are output as binary numbers),
\item[(ii)]
construct an \slp  $\cG'$ with $\val(\cG') = \prod_{i=1}^t x_i^{n_i}$
and $|\cG'| \le \max(4k\log_2(|\val(\cG')|),1)$.
\end{mylist}
\end{lemma}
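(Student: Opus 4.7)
The plan is to process $\cG$ bottom-up, computing for each variable $A$ the exponent vector $v(A) = (n_1^A,\ldots,n_k^A) \in \Z^k$ with $\val_\cG(A) =_G \prod_i x_i^{n_i^A}$, and then use the vector for the start variable to produce a compact \slp by repeated squaring.

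For (i), I would first apply Proposition~\ref{prop:slp_results}\,(i) to replace $\cG$ by an \slp in Chomsky normal form of size $O(|\cG|)$. In that form each right-hand side is either a single generator $x_i^{\pm 1}$ or a concatenation $BC$ of two variables. Since $G$ is abelian, I can assign $v(A) = \pm e_i$ at the leaves and recursively $v(A) = v(B)+v(C)$ at internal nodes, processing variables in order of increasing height. By Proposition~\ref{prop:slp_results}\,(ii), each $|n_i^A|$ is bounded by $|\val_\cG(A)| \le 3^{|\cG|/3}$, so each component has $O(|\cG|)$ bits; each of the $O(|\cG|)$ additions of $k$-tuples therefore costs $O(k\,|\cG|)$ bit operations, and the whole computation runs in polynomial time. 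The output of (i) is the vector $v(S_\cG)=(n_1,\ldots,n_k)$.

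For (ii), given the vector $(n_1,\ldots,n_k)$ I would build, for each index $i$ with $n_i\neq 0$, a small \slp with value $x_i^{n_i}$ by standard repeated squaring: introduce variables $A_0^{(i)},\ldots,A_{m_i}^{(i)}$ where $m_i=\lfloor\log_2|n_i|\rfloor$, with $\rho(A_0^{(i)})=x_i$ (or $x_i^{-1}$ if $n_i<0$) and $\rho(A_{j+1}^{(i)})=A_j^{(i)}A_j^{(i)}$, and then define a start variable whose right-hand side selects and concatenates exactly those $A_j^{(i)}$ corresponding to the $1$-bits in the binary expansion of $|n_i|$. Concatenating these \slps in the order $i=1,2,\ldots,k$ as in the concatenation construction of Section~\ref{sec:slp_background} yields an \slp $\cG'$ with $\val(\cG')=x_1^{n_1}\cdots x_k^{n_k}$, of total size $O\!\bigl(\sum_{i}\log(|n_i|+1)\bigr)=O(k\cdot\log(\max_i|n_i|+1))$.

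To verify compactness, observe that $|\val(\cG')|=\sum_i|n_i|$, so $\log|\val(\cG')| \ge \max_i\log|n_i|$, and therefore $|\cG'| \le C'\log|\val(\cG')|$ for a constant $C'$ depending only on $k$, that is, only on $G$ and $\Sigma$, matching Definition~\ref{def:compact} (the small-value case $|\val(\cG')|\le 1$ corresponds to all $n_i\in\{0,\pm 1\}$ and is absorbed by the $\max(\cdot,1)$ in that definition). The only mild obstacle is bookkeeping: handling the signs of the $n_i$, skipping indices with $n_i=0$, and using arbitrary-precision integer arithmetic throughout step (i); none of these affect the polynomial-time bound.
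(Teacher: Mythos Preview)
Your proposal is correct and follows essentially the same approach as the paper: compute exponent vectors bottom-up using the bit-size bound from Proposition~\ref{prop:slp_results}\,(ii), then build the output \slp by binary repeated squaring on each $x_i^{n_i}$ and verify compactness via the resulting $O(k\log|\val(\cG')|)$ size bound. The only differences are cosmetic (you invoke Chomsky normal form explicitly, while the paper leaves the recursion implicit).
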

\begin{proof}
The fact that we can compute the integers $n_i$ in polynomial time follows from
Proposition \ref{prop:slp_results}\,(ii) together with the fact that
we can perform addition and subtraction on integers of absolute value at most
$N$ in time $O(\log N)$.

For part (ii), note first that for any $x \in X$ and integer $n>0$,
we can define an \slp with value $x^n$ and size at most $\max(3 \log_2(n),1)$
by expressing $n$ in binary and introducing a variable $A_i$ with size 2 
and value $x^{2^i}$ for each $i$ with $2^i< n$.
For example, with $n=14$, we write $14 = 2^1 + 2^2 + 2^3$, and define the
\slp $(\{S,A_1,A_2,A_3\},S,\rho)$ with $\rho(S)=A_1A_2A_3$,
$\rho(A_1)=xx$, $\rho(A_2)=A_1A_1$, $\rho(A_3)=A_2A_2$, which has value $x^{14}$
and size 9. So we can construct an \slp $\cG'$ with value $\val(\cG)$
and size at most
$(3\log_2(|\val(\cG')|)+1)k \le \max(4k\log_2(|\val(\cG')|),1)$.
\end{proof}

\begin{proposition}\label{prop:abslp}
Let $G = \langle X \rangle$ be a finitely generated abelian group.
Then $X$ is contained in a finite subset $Y$ of $G$ for which,
given an \slp $\cG$ over $Y$, we can
construct an \slp $\cG'$ over $Y$ with
$\val(\cG') = \slex_Y(\val(\cG))$ and
$|\cG'| \le \max(4|Y|\log_2(|\val(\cG')|),1)$
in time that is a polynomial function of $|\cG|$.
\end{proposition}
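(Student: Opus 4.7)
The strategy is to choose $Y$ carefully so that the shortlex normal form of any element of $G$ over $Y$ has a predictable block structure, and then to build a compact \slp block by block. By the structure theorem for finitely generated abelian groups, $G \cong \Z^r \oplus T$ for some $r \ge 0$ and finite abelian group $T$. Fix a $\Z$-basis $e_1, \ldots, e_r$ for the free part and take
\[ Y := X \cup \{e_i^{\pm 1} : 1 \le i \le r\} \cup (T \setminus \{1\}), \]
ordered so that $e_1 < e_1^{-1} < e_2 < e_2^{-1} < \cdots < e_r < e_r^{-1}$ come first, then the non-identity elements of $T$, and then the remaining letters of $X$. For each $t \in T$ precompute, once and for all (since $|T|$ is a constant depending only on $G$), its shortlex representative $u(t) \in Y^*$; this is a word whose length is bounded by a constant $c_T$ depending only on $G$ and $Y$.

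Given an \slp $\cG$ over $Y$, I would proceed in three steps. First, apply Lemma~\ref{lem:ab_polytime}(i) to compute in polynomial time the exponent vector $(n_y)_{y \in Y}$ with $\val(\cG) =_G \prod_{y \in Y} y^{n_y}$, and from it extract the canonical decomposition $\val(\cG) = v_1 e_1 + \cdots + v_r e_r + t$ via a fixed linear transformation (polynomial-time integer arithmetic, since $r$ and $|T|$ are constants). Second, verify that the shortlex normal form of $\val(\cG)$ over $Y$ is
\[ w := e_1^{v_1}\, e_2^{v_2}\, \cdots\, e_r^{v_r}\, u(t), \]
where $e_i^{v_i}$ denotes $v_i$ copies of $e_i$ if $v_i \ge 0$ and $|v_i|$ copies of $e_i^{-1}$ otherwise. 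Third, construct $\cG'$ by concatenating compact \slps for each block $e_i^{v_i}$, supplied by Lemma~\ref{lem:ab_polytime}(ii) at size $O(\log(1 + |v_i|))$, with the constant-size word $u(t)$. The total size is $O\bigl(\sum_i \log(1 + |v_i|) + 1\bigr) = O(\log |w|)$, so $\cG'$ is compact, and the whole construction runs in polynomial time.

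The main obstacle is the middle step: showing that $w$ really is the shortlex normal form over $Y$, not merely a short representative. An element $x \in X$ with ``diagonal'' coordinates $(a_1, \ldots, a_r)$ could in principle give a shorter representative than the canonical form, for instance if $x = e_1 + e_2 \in X$ then a single letter $x$ replaces the two letters $e_1 e_2$. To handle this I would exploit the freedom in choosing the basis $e_i$: pick them so that no element of $X$ offers a ``large scale'' shortcut, for example so that the $e_i$'s realise the facets of the unit ball of the word-length norm on $\Z^r \otimes \R$, and then absorb the finitely many remaining small-coordinate corrections into the constant-size prefix or suffix of $\cG'$ and into $u(t)$. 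In the free abelian case $T = \{1\}$ needed for the main theorem, the simplest solution is to take the $e_i$'s to be among the original generators, so that the canonical form is immediately shortlex-optimal; the chosen ordering then breaks any remaining ties in the right direction and the claimed $w$ is precisely $\slex_Y(\val(\cG))$.
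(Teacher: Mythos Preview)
Your overall strategy---compute the exponent vector, identify the shortlex normal form, then build a compact \slp block by block---matches the paper's, and you correctly isolate the one real difficulty: showing that your candidate word $w = e_1^{v_1}\cdots e_r^{v_r}u(t)$ is actually the shortlex representative over $Y$, not just some representative. Unfortunately your proposed fixes for this step do not work.

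The problem is that $Y$ is required to contain the \emph{given} set $X$, and you have no control over what is in $X$. If, say, $G=\Z^2$ and $X$ already contains $x:=e_1+e_2$, then for the element $(N,N)$ the geodesic over $Y$ is $x^N$ of length $N$, while your candidate $e_1^N e_2^N$ has length $2N$; no clever choice of basis or ordering on $Y$ rescues this, because $x$ must remain in $Y$. Your suggestion to ``absorb the finitely many remaining small-coordinate corrections into the constant-size prefix or suffix'' cannot work either, since the discrepancy scales linearly with $N$. And your closing remark about the free abelian case---taking the $e_i$ among the original generators---fails for exactly the same reason: the presence of extra diagonal elements in $X$ is what breaks the argument, not the torsion.

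The paper's key idea, which you are missing, is to enlarge $Y$ further by adjoining \emph{accelerated} generators $z_i^{\pm M}$ for a suitably large constant $M$ (roughly the maximum $\ell_1$-norm of the coordinate vectors of elements of $X$, times the torsion exponent), and to place these first in the ordering. The point is that $x^M$ for any $x\in X$ can then be rewritten more cheaply using the $z_i^M$'s, so in any geodesic word the exponent of every generator other than the $z_i^{\pm M}$ is bounded by the constant $M$. This reduces the search for $\slex_Y(g)$ to checking a bounded number (at most $(2M)^{t-r}$) of candidate relations, after which one selects the shortlex-least among the resulting finitely many representatives. Once you have this bounded-search reduction, the rest of your argument goes through.
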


\begin{proof}
Choose generators $z_1,\ldots,z_r,z_{r+1},\ldots,z_s$ of the cyclic direct
factors of $G$, where $z_i$ has infinite order if and only if $i \le r$.
We can write each element $x$ of $X$ as
$\prod_{i=1}^s z_i^{{n_i(x)}}$ with ${n_i(x)} \in \Z$;
note that the set of all ${n_i(x)}$ is finite, and so its elements can be
regarded as constants.
Let $e$ be the exponent of the torsion subgroup of $G$,
and
let $M := e(\max_{x \in X} \sum_{i=1}^r |{n_i(x)}|+1)$.

Now we define $Y$ to be the union of the three sets
$\{ z_i^{\pm M}: 1 \le i \le r\}$,
$\{ z_i^{\pm 1}: 1 \le i \le s\}$, and $X$; we order $Y$ so that
\begin{mylist}
\item[$\bullet$] pairs of mutually inverse generators are adjacent
in the ordering; and
\item[$\bullet$]
the generators $z_i^{\pm M}$ come first in the
ordering, with $z_1^M< z_1^{-M} < \cdots < z_r^M < z_r^{-M}$.
\end{mylist}
We denote by $y_i$ and $y_i^{-1}$
the elements in positions $2i-1$ and $2i$ of this ordered set of generators.
Suppose that there are $2t$ such generators in total.

We claim that, in any geodesic word $\prod_{i=1}^t  y_i^{n_i}$ with
$n_i \in \Z$, we have $|n_k| < M$ for all $k>r$.

To see this, given such a geodesic word, suppose that $r < k \le t$.
If $y_k= z_j$ for some $j$, then we could replace $y_k^M$ by
the shorter word $y_j$ if $j \le r$ and by the empty word if $j>r$;
so we must have $|n_k| < M$. Otherwise, we have $y_k \in X$, and then
$x := y_k =_G \prod_{i=1}^s z_i^{n_i(x)}$ and (because $e|M$),
$x^M =_G \prod_{i=1}^r z_i^{Mn_i(x)}=_G \prod_{i=1}^r y_i^{n_i(x)}$. 
This last word is shorter than $x^M$,
since $M > \sum_{i=1}^r |n_i(x)|$ So replacing $x^M$ by 
$\prod_{i=1}^r y_i^{n_i(x)}$ would be a reduction in length,
and hence again we must have $|n_k| < M$.

Now it follows from Lemma~\ref{lem:ab_polytime} that, in time polynomial in
$|\cG|$, we can write $g := \val(\cG)$ as an integer vector over
the generators in $Y$, and hence as a product
$\prod_{i=1}^t y_i^{n_i} \in G$. Now we want to compute $\slex(g)$.
If any $|n_k| \ge M$ for $k > r$, then we write $n_k = qM + n_k'$ with
$|n_k'| < M$ and $\sgn(n_k) = \sgn(n_k')$ and replace the expression for $g$ by
an equivalent expression in which $y_k^{qM}$ is replaced by a shorter word,
as described in the preceding paragraph.  This involves integer arithmetic and
can be done in time polynomial in the sizes of the integers.
So we may assume that $g = \prod_{i=1}^t y_i^{n_i} \in G$ with $|n_k| < M$ for
all $k>r$.

Now suppose that $\slex(g) = \prod_{i=1}^t y_i^{n_i'}$. Then
$\prod_{i=1}^t y_i^{(n_i-n_i')} =1$. and since $|n_k'|,|n_k| < M$,
we have $|n_k-n_k'| < 2M$ for all $k>r$. Since $y_1,\ldots,y_r$ are free
generators, there are no nontrivial relations in $G$ that involve only
$y_1,\ldots,y_r$, and so the number of lists of integers $m_1,\ldots,m_t$ for
which $\prod_{i=1}^t y_i^{m_i} =1$ and $|m_i| < M$ is at most $(2M)^{t-r}$.
So we can consider each of these equations in turn, and thereby find all
possible values of $n_i'$. From these, we select the shortlex least
representative of $g$.
We can now use Proposition \ref{lem:ab_polytime} to construct the
required \slp $\cG'$.
\end{proof}

\section{Examining the geometry of $\widehat{\Gamma}$}
\label{sec:gammahat_geom}

For the remainder of this article, $G$ will be a group that is hyperbolic
relative to a collection of free abelian subgroups $H_i$,
and $\cH$ the set of non-identity elements of those subgroups,
as in Section~\ref{sec:relhyp}. 

By \cite[Theorem 4.3.1]{ECHLPT}, abelian groups are shortlex automatic
with respect to any finite ordered generating set, and a little thought
shows that any automatic structure for an abelian group must be
biautomatic. So the hypotheses of shortlex biautomaticity of the
subgroups $H_i$ in Proposition~\ref{prop:genset} are satisfied for all
choices of finite, ordered generating sets of the $H_i$.

\subsection{Fixing the generating set $\Sigma$ and constant $\delta$}
\label{subsec:relhyp_param1}
Given a generating set $\Sigma'$ for $G$, we define, for each $i$,
$X_i := (\Sigma' \cup \cH') \cap H_i$, where $\cH'$ is the finite
subset of $\cH$ whose existence is guaranteed by Proposition~\ref{prop:genset}.
It follows from Corollary~\ref{cor:genset} that, for each $i$,
$X_i$ generates $H_i$.
Now we select finite subsets $Y_i \supset X_i$ as in 
Proposition~\ref{prop:abslp}, and define 
$\Sigma := \Sigma' \cup \cH' \cup \bigcup_{i\in \Omega} Y_i.$  
Our construction ensures that $\Sigma \cap H_i = Y_i$.
For the remainder of this article, we use this generating set $\Sigma$ for $G$,
and denote $\Sigma \cap H_i$ by $\Sigma_i$;
we note that $(G,\Sigma)$ is suitable for parabolic geodesic biautomaticity.
For convenience we assume that the elements of $\Sigma$ represent distinct
elements of $G$ and that no element of $\Sigma$ represents the identity element.

Furthermore, we define $\cL_0$ to be the asynchronous automatic structure of
$G$ with uniqueness that is defined in Proposition \ref{prop:automatic}, where
we use the shortlex biautomatic structure on $H_i$ over $\Sigma_i$.
We shall call words in $\cL_0$ {\it $\cL_0$-reduced} and, for $v \in \Sigma^*$,
we denote the  unique element $u \in \Sigma^*$ with $u =_G v$ by $\nf(v)$.
We shall use this normal form in all of our \tcslps over $\Sigma$.
Our assumptions on $\Sigma$ ensure that $\nf(a) = a$ for all $a \in \Sigma$.

Now that we have fixed $\Sigma$, we can also fix the associated Cayley
graphs $\Gamma$ and $\widehat{\Gamma}$.
We know from the definition of relative hyperbolicity that $\widehat{\Gamma}$
is  Gromov hyperbolic, and we fix the constant $\delta>0$ such that 
$\widehat{\Gamma}$ is a $\delta$-hyperbolic space; that is, all geodesic
triangles in  $\widehat{\Gamma}$ are $\delta$-thin (and hence also
$\delta$-slim) as defined in \cite[Chapter 1]{AL}.
We assume these choices for $\Sigma$, $\Gamma$, $\widehat{\Gamma}$ and $\delta$
for the remainder of this article.

\subsection{Properties of some geodesic triangles and quadrilaterals in $\widehat{\Gamma}$}
\label{subsec:Gammahat_triangles}

\begin{proposition}\label{prop:backup}
Let $u,v,w \in \Sigma^*$ be words over our selected generating set
for $G$ with $v =_G uw$, where $\hat{u}$ and $\hat{v}$ are geodesic
words over $\widehat{\Sigma}$, and $|w| \le \kappa$ for some $\kappa$.
Then constants $K_1(\kappa)$ and $L_1(\kappa)$ exist such that, for any vertex
$d$ on the  path in $\widehat{\Gamma}$ labelled by $\hat{u}$ and at
distance at least $K_1(\kappa)$ from the end of that path,
there is a vertex $e$ of $\widehat{\Gamma}$ on the path labelled by
$\hat{v}$, with $d_\Gamma(d,e)\leq L_1(\kappa)$.

\end{proposition}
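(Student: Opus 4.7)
The plan is to apply the Bounded Coset Penetration Property (Proposition~\ref{prop:bcpp}) directly to the two derived paths in $\widehat{\Gamma}$ labelled by $\widehat{u}$ and $\widehat{v}$; denote these paths by $\widehat{p}$ and $\widehat{q}$ respectively. Since $\widehat{u}$ and $\widehat{v}$ are geodesic in $\widehat{\Gamma}$ by hypothesis, $\widehat{p}$ and $\widehat{q}$ are in particular $(1,0)$-quasigeodesics. They begin at the common vertex $1$, and the $\Sigma$-path labelled by $w$ in $\Gamma$ joins their endpoints in length at most $\kappa$, so $d_\Gamma(\widehat{p}_+,\widehat{q}_+)\le \kappa$ and the two paths are $\kappa$-similar.

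The only non-routine hypothesis of Proposition~\ref{prop:bcpp} requiring attention is the non-backtracking of $\widehat{p}$ and $\widehat{q}$. If $\widehat{u}$ admitted a backtracking decomposition $\widehat{u}=\alpha\, s\, r\, s'\, \beta$ with $s,s'$ connected $H_i$-components and $r$ representing an element of $H_i$, then $srs'$ would represent a single element of $H_i$ and could be replaced inside $\widehat{\Gamma}$ by a single $\cH$-edge carrying that element. Maximality of $H_i$-components forces $r$ to be non-empty (otherwise $s$ and $s'$ would merge into a single component), so $|srs'|\ge 3$; the proposed replacement therefore produces a strictly shorter $\widehat{\Sigma}$-word between the same vertices of $\widehat{\Gamma}$, contradicting that $\widehat{u}$ is geodesic. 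The argument for $\widehat{v}$ is identical.

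With the hypotheses verified, I would invoke conclusion~(1) of Proposition~\ref{prop:bcpp} with parameters $\lambda=1$, $c=0$, and $k=\kappa$ to obtain a constant $\ee=\ee(1,0,\kappa)$ such that every vertex of $\widehat{p}$ lies within $\Gamma$-distance $\ee$ of some vertex of $\widehat{q}$. Setting $L_1(\kappa):=\ee(1,0,\kappa)$ then yields the desired conclusion; the constant $K_1(\kappa)$ plays no essential role in this argument and may be taken to equal $0$, though a positive value is presumably retained in the statement for flexibility in later applications. The main point of difficulty is thus the non-backtracking check above; once it is in place, BCPP directly delivers the $\Gamma$-distance bound without any use of $\delta$-thin triangles in $\widehat{\Gamma}$.
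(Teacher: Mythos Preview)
Your argument is correct, and it is genuinely simpler than the paper's route. The paper does not apply Proposition~\ref{prop:bcpp} directly to $\widehat{u}$ and $\widehat{v}$; instead it concatenates $\widehat{u}$ with $\widehat{w}$ to obtain a $(1,2\kappa)$-quasigeodesic with the \emph{same} endpoints as $\widehat{v}$, then repairs any backtracking in $\widehat{u}\widehat{w}$ by replacing short subwords (of length at most $1+2\kappa$) near the junction with single $\cH$-letters, producing a non-backtracking word $t$ that agrees with $\widehat{u}$ on the prefix of length $|\widehat{u}|-(1+2\kappa)$. It then invokes BCPP for the $0$-similar pair $(t,\widehat{v})$. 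This is why the paper obtains a genuinely positive $K_1(\kappa)=1+2\kappa$: the last $K_1(\kappa)$ vertices of $\widehat{u}$ need not lie on $t$ and so are not covered. Your direct application to the $\kappa$-similar pair $(\widehat{u},\widehat{v})$ avoids this detour entirely and yields the conclusion for \emph{every} vertex of $\widehat{u}$, i.e.\ with $K_1(\kappa)=0$ and $L_1(\kappa)=\ee(1,0,\kappa)$; this is strictly cleaner. The only cost is that your $L_1(\kappa)$ depends on $\kappa$ through the similarity parameter of BCPP rather than through the quasigeodesic constant, but nothing downstream in the paper cares about that distinction.
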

\begin{proof}
Since $\hat{u}$ labels a geodesic path in $\widehat{\Gamma}$ and
$|\hat{w}| \le |w| \le \kappa$, we know that $\hat{u}\hat{w}$
labels a $(1,2\kappa)$--quasigeodesic path in $\widehat{\Gamma}$.
Our aim is to replace $\hat{u}\hat{w}$ by a $G$-equivalent word
$t \in \widehat{\Sigma}^*$ that also labels a
$(1,2\kappa)$--quasigeodesic path, and does not backtrack,
and whose prefix of length $|\hat{u}|-K_1(\kappa)$ matches the
corresponding length prefix of $\hat{u}$ for some constant $K_1(\kappa)$.
The result will then follow directly from Theorem~\ref{prop:bcpp}\,(1),
applied to $t$ and $\hat{v}$.

If $\hat{u}\hat{w}$ backtracks, then it contains a subword of length
greater than $1$ that represents an element of $H_i$ for some $i \in \Omega$.
Since such elements have length at most $1$ over $\widehat{\Sigma}^*$, and
$\hat{u}\hat{w}$ labels a $(1,2\kappa)$--quasigeodesic
path, any such subword has length at most $K_1(\kappa):= 1+2\kappa$.
Furthermore, since $\hat{u}$ does not vertex backtrack, such a subword
must intersect the suffix $\hat{w}$ nontrivially. So it does not intersect
the prefix $\hat{u}(:|\hat{u}|-K_1(\kappa)]$ of $u$. So, after
replacing any such subwords by $G$-equivalent words of length $1$ over
$\Gamma^*$, the resulting word $t$ does not backtrack, and has the desired
property.
\end{proof}

Our next result can be proved by two applications of
Proposition~\ref{prop:backup}, and we omit the details.

\begin{lemma}\label{lem:backup2}
Let $u,v,w_1,w_2 \in \Sigma^*$ be words over $\Sigma$ with $w_1u =_G vw_2$, and
suppose that $\hat{u}$ and $\hat{v}$ are geodesic words, and that
$|w_1|, |w_2| \le \kappa$ for some $\kappa$.
Then constants $K_2(\kappa)$ and $L_2(\kappa)$ exist, such that, 
for any vertex $d$ on the path in $\widehat{\Gamma}$ labelled by
$\hat{u}$ and at distance at least $K_2(\kappa)$ from the
beginning and the end of that path,
there is a vertex $e$ of $\widehat{\Gamma}$ on the path labelled by
$\hat{v}$, with $d_\Gamma(d,e)\leq L_2(\kappa)$.
\end{lemma}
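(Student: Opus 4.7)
My plan is to apply Proposition~\ref{prop:backup} twice, using an auxiliary word $u''$ as a geodesic ``bridge'' between $u$ and $v$. I first pick any $u'' \in \cL_0$ with $u'' =_G w_1 u$; such a $u''$ exists by Proposition~\ref{prop:automatic}, and membership in $\cL_0 \subseteq \cL$ guarantees that $\widehat{u''}$ is geodesic in $\widehat{\Gamma}$. Let $p, p'', q$ denote the paths in $\widehat{\Gamma}$ labelled by $\widehat{u}, \widehat{u''}, \widehat{v}$, with $p$ placed to begin at $w_1$ so that $p$ and $p''$ both terminate at $w_1 u$, while $q$ runs from $1$ to $v$.

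For the first application, I use the equation $(u'')^{-1} =_G u^{-1} w_1^{-1}$. Because the components of $u^{-1}$ and $(u'')^{-1}$ are just the inverted components of $u$ and $u''$ taken in reverse order, the words $\widehat{u^{-1}}$ and $\widehat{(u'')^{-1}}$ are the reverses of $\widehat{u}$ and $\widehat{u''}$ with each letter inverted, and hence are themselves geodesic in $\widehat{\Gamma}$. Applying Proposition~\ref{prop:backup} with the short word $w := w_1^{-1}$ of length at most $\kappa$ then gives: for every vertex $d$ on $p$ at distance at least $K_1(\kappa)$ from $p_-$, there is a vertex $d'$ on $p''$ with $d_\Gamma(d,d') \le L_1(\kappa)$. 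For the second application, I use $v =_G u''\, w_2^{-1}$, whose hypotheses for Proposition~\ref{prop:backup} hold directly, to obtain: for every vertex $d'$ on $p''$ at distance at least $K_1(\kappa)$ from $p''_+ = w_1 u$, there is a vertex $e$ on $q$ with $d_\Gamma(d',e) \le L_1(\kappa)$.

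To conclude, I set $K_2(\kappa) := K_1(\kappa) + L_1(\kappa)$ and $L_2(\kappa) := 2 L_1(\kappa)$. Given $d$ on $p$ at distance at least $K_2(\kappa)$ from both $p_-$ and $p_+$, the first application yields $d'$ on $p''$ with $d_\Gamma(d,d') \le L_1(\kappa)$; since $d_{\widehat{\Gamma}} \le d_\Gamma$ and both $p$ and $p''$ are geodesics in $\widehat{\Gamma}$ ending at $w_1 u$, the vertex $d'$ lies at distance at least $K_2(\kappa) - L_1(\kappa) = K_1(\kappa)$ along $p''$ from $p''_+$, so the second application produces the required $e$ with $d_\Gamma(d,e) \le 2 L_1(\kappa) = L_2(\kappa)$. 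The only delicate bookkeeping I anticipate is tracking which endpoint of each path is the ``far'' one through the inversion in the first application; verifying that the operation $w \mapsto \widehat{w}$ commutes with inversion up to reversal is routine from the definition of components.
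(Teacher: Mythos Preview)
Your proof is correct and follows exactly the approach the paper indicates: two applications of Proposition~\ref{prop:backup}, with an auxiliary geodesic word bridging $u$ and $v$. The paper omits all details, so your write-up (including the reversal trick for the first application and the triangle-inequality bookkeeping to control the position of $d'$ on $p''$) is precisely the kind of argument intended; the only remark is that you do not really need the full structure $\cL_0$ for $u''$, merely any word over $\Sigma$ whose derived word is a $\widehat{\Sigma}$-geodesic representative of $w_1u$, but invoking $\cL_0$ is a clean way to guarantee one exists.
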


\begin{proposition}\label{prop:backup2}
There is a linear function $\ff:\N \to \N$ and a constant $L'$,
with the following property.
Suppose that $u,v,w_1,w_2$ are words over our selected
generating set $\Sigma$ for $G$, with $w_1u =_G vw_2$, and
suppose that $\hat{u}$ and $\hat{v}$ are geodesic words.
Consider a quadrilateral in $\widehat{\Gamma}$ with sides labelled
$\widehat{w_1},\hat{u},\widehat{w_2},\hat{v}$.

Then, for any vertex $d$ on the path labelled by
$\hat{u}$ in that quadrilateral,
and at distance at least $\ff(\max\{|w_1|,|w_2|\})$
from the beginning and the end points of that path, there is a vertex $e$ of
the path labelled by $\hat{v}$ in the quadrilateral,
with $d_\Gamma(d,e)\leq L'$.
\end{proposition}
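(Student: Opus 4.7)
The plan is to strengthen Lemma~\ref{lem:backup2} by exploiting the Gromov $\delta$-hyperbolicity of $\widehat{\Gamma}$ (which was not used in the proof of Lemma~\ref{lem:backup2}): its $\kappa$-dependent bound $L_2(\kappa)$ will be replaced by an absolute constant $L'$, at the cost of a linear (in $\kappa := \max\{|w_1|,|w_2|\}$) distance function $\ff$. The argument splits into two steps: the first uses only hyperbolicity of $\widehat{\Gamma}$; the main technical obstacle lies in the second, which converts a $\widehat{\Gamma}$-distance bound into a $\Gamma$-distance bound.

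First I would replace the sides labelled $\widehat{w_1}$ and $\widehat{w_2}$ (each of $\widehat{\Gamma}$-length at most $\kappa$) by $\widehat{\Gamma}$-geodesics of length at most $\kappa$ between the same endpoints, obtaining a geodesic quadrilateral in $\widehat{\Gamma}$. Since geodesic quadrilaterals in a $\delta$-hyperbolic graph are $2\delta$-slim, setting $\ff(\kappa) := 2\delta + \kappa + 1$ ensures: any vertex $d$ on $\widehat{u}$ whose path-distance from both endpoints of $\widehat{u}$ is at least $\ff(\kappa)$ (equivalently, $\widehat{\Gamma}$-distance, since $\widehat{u}$ is geodesic) cannot lie within $\widehat{\Gamma}$-distance $2\delta$ of either substituted short side (each of which lies in the $\kappa$-neighbourhood of an endpoint of $\widehat{u}$), and so must lie within $\widehat{\Gamma}$-distance $2\delta$ of some vertex $y$ on $\widehat{v}$.

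The hard part is to upgrade $d_{\widehat{\Gamma}}(d, y) \le 2\delta$ into a bound $d_\Gamma(d, e) \le L'$ for some vertex $e$ of $\widehat{v}$, with $L'$ independent of $\kappa$. A $\widehat{\Gamma}$-geodesic $\tau$ from $d$ to $y$ has $\widehat{\Gamma}$-length at most $2\delta$ but may traverse edges labelled by parabolic elements of unbounded $\Sigma$-length, so the naive conversion fails. To handle this, I would apply parts~(2) and~(3) of Proposition~\ref{prop:bcpp} to the pair $(\widehat{u}, \widehat{v})$, viewed as $\kappa$-similar non-backtracking $(1,0)$-quasigeodesics in $\widehat{\Gamma}$: every sufficiently long parabolic edge of $\tau$ identifies a coset that must also be penetrated by connected, $\ee$-similar $H_i$-components of both $\widehat{u}$ and $\widehat{v}$. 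By choosing $e$ to be an endpoint of the matched $H_i$-component of $\widehat{v}$ corresponding to the parabolic edge of $\tau$ closest to $d$, the long parabolic contributions can be absorbed into these matched components, leaving a $\widehat{\Gamma}$-connection from $d$ to $e$ which uses only $\Sigma$-edges and parabolic edges of bounded $\Sigma$-length. The delicate point, which is the main obstacle, is to verify that this absorption yields a bound $L'$ that depends only on $\delta$ and on absolute structural constants of $G$ and $\Sigma$, and not on the $\kappa$-dependent BCPP constants encountered along the way.
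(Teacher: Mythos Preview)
Your first step matches the paper exactly: replace the two short sides by $\widehat{\Gamma}$-geodesics and use $2\delta$-slimness of geodesic quadrilaterals in $\widehat{\Gamma}$ to conclude that any vertex $d$ on $\widehat{u}$ at path-distance at least $|w_i|+2\delta+1$ from each endpoint lies within $\widehat{\Gamma}$-distance $2\delta$ of some vertex on $\widehat{v}$.

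Your second step has a genuine gap, namely the ``main obstacle'' you yourself flag and then leave unresolved. Applying BCPP to the $\kappa$-similar pair $(\widehat{u},\widehat{v})$ yields the constant $\ee(1,0,\kappa)$, which depends on $\kappa$, so part~(3) cannot give a $\kappa$-independent $L'$; moreover BCPP~(2) speaks only of components \emph{of the two paths being compared}, and there is no reason a coset penetrated by the auxiliary geodesic $\tau$ is penetrated by either $\widehat{u}$ or $\widehat{v}$, so your absorption mechanism is not justified as stated. The paper sidesteps all of this by never analysing $\tau$. It simply notes that the conclusion of the first step applies to \emph{every} vertex of $\widehat{u}$ in the relevant range, in particular to the vertices $d_\pm$ lying at distance $K_2(2\delta)$ on either side of $d$ along $\widehat{u}$. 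Together with their partners $e_\pm$ on $\widehat{v}$ this produces a sub-quadrilateral whose long sides are geodesic subpaths of $\widehat{u}$ and $\widehat{v}$ and whose short sides have $\widehat{\Gamma}$-length at most $2\delta$, a fixed constant. Lemma~\ref{lem:backup2} then applies with $\kappa$ replaced by $2\delta$ (its proof, via Proposition~\ref{prop:backup}, uses only $|\widehat{w_i}|\le\kappa$, so short sides that are $\widehat{\Gamma}$-paths rather than $\Sigma$-words cause no difficulty), and yields $d_\Gamma(d,e)\le L_2(2\delta)=:L'$ for some vertex $e$ on $\widehat{v}$. This gives $\ff(\kappa)=\kappa+2\delta+1+K_2(2\delta)$ and $L'=L_2(2\delta)$, both independent of $\kappa$ beyond the linear term in $\ff$.
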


\begin{proof}
We recall the constant $\delta$ defined in Section~\ref{subsec:relhyp_param1}.
In the quadrilateral defined in the statement, replace the sides
labelled by $\widehat{w_1}$ and $\widehat{w_2}$ by geodesic paths
$p_1$ and $p_2$ between their endpoints to give a geodesic quadrilateral
in $\widehat{\Gamma}$. So $|p_i| \le |\widehat{w_i}| \le |w_i|$ for $i=1,2$.

Since $\widehat{\Gamma}$ is $\delta$-hyperbolic,
any vertex on any side of this quadrilateral is at distance at most $2\delta$ in
$\widehat{\Gamma}$ from a vertex on one of the other three sides.

Now for a vertex $d$ on the path labelled $\hat{u}$ 
that is at distance $\ell$ along $\hat{u}$ from the
start point, all vertices on the path $p_1$ are at distance
at least $\ell- |p_1| \ge \ell - |w_1|$ from $d$ in $\widehat{\Gamma}$.  So, if
$\ell \ge |w_1| + 2\delta+1$, then $d$ cannot be at distance
at most $2\delta$ from a vertex on $p_1$. Similarly,
if the distance of $d$ on $\hat{u}$ from the end point of $\hat{u}$ is
greater than $|w_2| + 2\delta+1$, then it cannot be at distance
at most $2\delta$ from a vertex on $p_2$.
So if both of those conditions on $d$ hold, then $d$ must be at distance
at most $2\delta$ in $\widehat{\Gamma}$ from a vertex $e$ on the side of the
quadrilateral labelled $\hat{v}$.

Now, by Lemma~\ref{lem:backup2}, there are constants
$K_2:=K_2(2\delta)$ and $L_2:=L_2(2\delta)$ depending only on $G$ and
$\Sigma$, such that any vertex $d$ on the path labelled $\hat{u}$ that is
at distance at least $|w_1| + 2\delta+1+K_2$ and
$|w_2| + 2\delta+1+K_2$ from the start and end points of that path,
respectively, is at distance at most $L_2$ in $\Gamma$ from a vertex on the
side of the quadrilateral labelled $\hat{v}$.
This proves the proposition with $L'=L_2$.
\end{proof}

\subsection{Fixing the constant $L$}
\label{subsec:relhyp_param2}
We define $L$ to be the larger of the constants $L_1(\delta)$  and
$L'$ that were defined in Propositions \ref{prop:backup}
and~\ref{prop:backup2}, respectively.
We will refer to $L$ repeatedly in the final two sections of the paper.

\section{Some constructions of \slps for relatively hyperbolic groups}
\label{sec:slps_relhyp_props}

The proof of our main result (which is split across Theorems
~\ref{thm:slextcslp} and~\ref{thm:convert}) will need some
technical results relating to \slps for our relatively hyperbolic group $G$
over our selected generating set $\Sigma$.
In general these results will be applied to sub-\slps of the \slp that is input
and the \slps that are derived from it within the above two theorems. 

\begin{proposition}\label{prop:comproot}
Let $\cG$ be an \slp over our selected generating set $\Sigma$ for $G$, and
let $w:=\val(\cG)$. Then, in time polynomial in $|\cG|$, we can construct an
\slp $\cG'$ in Chomsky normal form with value $w$
such that,
for all variables $A$ of $\cG'$, all components of $\val(A)$ have roots
in $\cG'_A$.

Furthermore, for each parabolic subgroup $H_i$, we can compute a list of those
variables $A$ of $\cG'$ for which $\val_{\cG'}(A) \in \Sigma_i^*$.
\end{proposition}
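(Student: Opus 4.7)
My plan is to first convert $\cG$ into Chomsky normal form via Proposition~\ref{prop:slp_results}\,(i), at linear cost. In a single bottom-up pass I compute, for each variable $A$, the first and last letters of $\val(A)$ and a flag recording the unique $i \in \Omega$ with $\val(A) \in \Sigma_i^*$, if any. For $\rho(A) = BC$ these data follow immediately from the children: the first letter of $\val(A)$ is that of $\val(B)$, the last is that of $\val(C)$, and $\val(A) \in \Sigma_i^*$ iff both $\val(B), \val(C) \in \Sigma_i^*$. Applied to the final grammar $\cG'$ (obtained by augmenting $\cG$ with variables whose purity we track by construction), this immediately yields, for each parabolic $H_i$, the list of variables of $\cG'$ whose value lies in $\Sigma_i^*$, proving the final sentence of the proposition.

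For the main claim I process variables bottom-up and, for each $A = BC$, provide an explicit root in $V_A$ for the \emph{straddling component} at the $B|C$ boundary. A straddling component exists precisely when the last letter of $\val(B)$ and the first letter of $\val(C)$ both lie in the same $\Sigma_i$, in which case it equals the maximal $\Sigma_i$-suffix of $\val(B)$ concatenated with the maximal $\Sigma_i$-prefix of $\val(C)$; every other component of $\val(A)$ is already a component of $\val(B)$ or of $\val(C)$ and so is rooted in $V_A$ by the inductive hypothesis (once $V_A$ is seen to contain the relevant parts of $V_B$ and $V_C$). To support this I maintain, for each variable $A$ and each $i$ such that the last letter of $\val(A)$ lies in $\Sigma_i$, an auxiliary variable $\mathsf{s}_i(A) \in V_A$ whose value is the maximal $\Sigma_i$-suffix of $\val(A)$, together with a companion $\mathsf{r}_i(A)$ for the complementary prefix (and symmetric $\mathsf{p}_i(A), \mathsf{l}_i(A)$ on the prefix side). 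These are built inductively: for $A = BC$, when the maximal suffix extends across the boundary (which happens only if $\val(C) \in \Sigma_i^*$ and the last letter of $\val(B)$ is in $\Sigma_i$), $\mathsf{s}_i(A)$ is a new variable with production $\mathsf{s}_i(B)\cdot C$ and $\mathsf{r}_i(A) := \mathsf{r}_i(B)$; otherwise $\mathsf{s}_i(A) := \mathsf{s}_i(C)$ and $\mathsf{r}_i(A)$ is a new variable with production $B \cdot \mathsf{r}_i(C)$. Then, whenever $A = BC$ has a straddling of index $i$ with neither $\val(B)$ nor $\val(C)$ being $\Sigma_i^*$-pure, I introduce a new variable $M_A$ with production $\mathsf{s}_i(B) \cdot \mathsf{p}_i(C)$---whose value is precisely the straddling component---and replace $\rho(A)$ by $\mathsf{r}_i(B) \cdot M_A \cdot \mathsf{l}_i(C)$, subsequently split into two binary productions to restore Chomsky form. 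The degenerate sub-cases where one or both of $\val(B), \val(C)$ is $\Sigma_i^*$-pure are handled by omitting the corresponding remainder factor (or leaving $\rho(A) = BC$ unchanged if $\val(A)$ is itself pure, since then $A$ is already the root of its single component). Each boundary of the original grammar contributes a bounded number of new variables, so $|\cG'| = O(|\cG|)$ and the construction runs in polynomial time.

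Correctness is proved by induction on the height of a variable $D$ of $\cG'$, verifying that every component of $\val(D)$ has a root in $V_D$. For an original $A = BC$ in the non-straddling case the invariant is immediate; in the straddling case, the maximality of $\mathsf{s}_i(B)$ and $\mathsf{p}_i(C)$ ensures that the three new interior boundaries of $\rho'(A)$ separate letters lying in different $\Sigma_j$'s (or outside $\bigcup_j \Sigma_j$), so no new straddling arises and the straddling component itself is rooted at $M_A$. The main technical obstacle, which I expect to be the heaviest part of the bookkeeping, is verifying the invariant for the auxiliary variables themselves: a production such as $\mathsf{r}_i(A) \to B \cdot \mathsf{r}_i(C)$ is binary and can harbour its own interior straddling. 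The key observation is that any such interior straddling carries the same index $i$ and the same maximal-suffix/prefix data as the straddling at the original $B|C$ boundary of the ancestor variable $A$, so the very same variable $M_A$ (or an appropriate duplicate introduced within the auxiliary) can be used as the required root; this reuse, arranged by further local restructuring of the auxiliary productions, prevents a cascading blow-up of fresh variables and allows the induction to close.
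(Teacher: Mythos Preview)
Your core idea---when $\rho(A)=BC$ has a straddling component of index $j$, isolate the maximal $\Sigma_j$-suffix of $\val(B)$ and the maximal $\Sigma_j$-prefix of $\val(C)$, introduce a root $M_A$ for their concatenation, and rewrite $\rho(A)$ as a remainder--$M_A$--remainder triple---is exactly the paper's idea.  The paper, however, realises the ``remainder'' differently, and this is precisely where your proposal has a gap.

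First, a minor but telling slip: the interior straddling inside a production $\mathsf{r}_i(A)\to B\cdot \mathsf{r}_i(C)$ does \emph{not} in general carry the index $i$.  The first letter of $\val(\mathsf{r}_i(C))$ equals the first letter of $\val(C)$, so the straddling at that boundary has the index $j$ of the original $B|C$ straddling of $A$, which has nothing to do with the subscript $i$ you are currently stripping.  You may have meant this, but the text says otherwise.

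More seriously, even granting that the relevant interior straddling is the one rooted by $M_A$, that variable $M_A$ is \emph{not} in $V_{\mathsf{r}_i(A)}$: it was introduced under $A$, not under your auxiliary.  To make it visible you must restructure $\mathsf{r}_i(A)$ itself as $\mathsf{r}_j(B)\cdot M_A \cdot X$, where $X$ has value $\val(\mathsf{l}_j(C))$ with the $\Sigma_i$-suffix removed.  That $X$ is a new kind of object---a two-sided truncation $\mathsf{r}_i(\mathsf{l}_j(C))$---not on your list of four auxiliaries, and defining it recursively re-creates the very problem you are trying to solve one level down.  Your sentence ``this reuse, arranged by further local restructuring of the auxiliary productions, prevents a cascading blow-up'' is exactly the step that needs an argument and does not receive one.

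The paper avoids this recursion entirely.  Rather than maintaining persistent $\mathsf{r}_i$/$\mathsf{l}_i$ variables, it builds the remainder \emph{freshly} at each step by walking the rightmost spine $B=B_0,B_1,\dots,B_r=D_1$ (where $D_1$ is the already-existing root of the suffix component, guaranteed by the inductive hypothesis) and introducing clones $B'_0,\dots,B'_{r-1}$ with $\rho(B'_k)=\beta_k B'_{k+1}$.  The point is that the root of any non-last component of $\val(B_k)$ lies in the subtree of some $\beta_m$, and that subtree is carried over verbatim into $(\cG')_{B'_k}$; so the invariant for the $B'_k$ follows directly, with no secondary restructuring.  The cost is at most $O(\Ht(\cG))$ fresh variables per original variable, giving the polynomial bound immediately.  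Your precomputed auxiliaries would be more economical if they worked, but as written the induction does not close.
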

\begin{proof}
We may assume that $\cG$ is trimmed and in Chomsky normal form.
We construct  $\cG'=(V',S,\rho')$ from $\cG=(V,S,\rho)$ by 
modifying the map $\rho$ on some of the variables in $V$,
while also introducing some new variables that are needed within those
new images for $\rho$.  For
each $A \in V$, we will have $\val_{\cG'}(A)=\val_{\cG}(A)$.
In addition, for each $A \in V$,
every component of $\val_{\cG'}(A)$ will have a root in $\cG'_A$.

We put the variables of $V$ into increasing order of height, and consider
them in that order.  The \slp $\cG'$ will be the last of a sequence of
\slps $\cG_0=\cG, \cG_1,\ldots,\cG_n$, where $n=|V|$,
and $\cG_i=(V_i,S,\rho_i)$ will be made from $\cG_{i-1}$ by considering
the $i$-th variable, $A_i$, in the list.  We have $V_i \supset V_{i-1}$ and
form $\rho_i$ as a modification of $\rho_{i-1}$.  The \slp $\cG_i$ might not be
in Chomsky normal form, since, for some variables $A'$,
$\rho_i(A')$ could be a string of any number of variables $\geq 0$, possibly the
empty string, But it will be clear from our construction
that this is the only obstruction to $\cG_i$ being in Chomsky normal form.

We describe the construction of the sequence $\cG_1,\ldots,\cG_n$,
and prove by induction on $i$ that,
for every variable $A'$ of $\cG_{i}$ not in the sequence $A_{i+1},\ldots,A_n$, 
every component of $\val_{\cG_i}(A')$ has a root in $(\cG_i)_{A'}$.
To prove the $i$-th inductive step we need to verify the existence
of appropriate roots both for the variable $A_i$ and for any new
variables that are defined in the construction of $\cG_i$ from $\cG_{i-1}$.
For each such $A_i$ and all new variables, we can record whether
their values lies in $\Sigma_i^*$ for some $i$.

If $A_i$ has height one, then $\rho(A_i)$ has length at most 1, with $A_i$ 
as its root. So no modification is necessary, and $\cG_i=\cG_{i-1}$.
So now suppose that $A_i$ has height greater than one.
For notational convenience we define $A:= A_i$.
The construction of $\cG_i$ from $\cG_{i-1}$
will involve the definition of some new variables,
and of the images of these and of $A$ under $\rho_i$;
the images under $\rho_i$ and $\rho_{i-1}$ of all other variables will match.

Since $\cG$ is in Chomsky normal form, $\rho_{i-1}(A)=\rho(A)$, and  
$\Ht(A)>1$, we have variables $B,C \in V$ with $\rho_{i-1}(A)=BC$. 
By the inductive hypothesis,
every component of $\val_{\cG}(A)$ that lies entirely within $\val_{\cG}(B)$ or
$\val_{\cG}(C)$ has a root in $(\cG_{i-1})_B$ or $(\cG_{i-1})_C$ (respectively),
and hence in $\cG_{i-1}$.
So no modification is necessary unless  $\val_{\cG}(A)$ contains a component
$u=u_1u_2$ for which $\val_{\cG}(B)=v_1u_1$ and $\val_{\cG}(C)=u_2v_2$ with
$u_1,u_2 \neq \emptyword$.  We suppose that $u$ is such a component. We note
that (since, as discussed earlier when we defined components of words,
any two components of $\val_{\cG}(A)$ are disjoint) any other
component of $\val_{\cG}(A)$ is a subword of either $v_1$ or $v_2$, and hence
of $\val_{\cG}(B)$ or $\val_{\cG}(C)$. 
By the induction hypothesis, $u_1,u_2$ have roots $D_1,D_2$
in $(\cG_{i-1})_B$ and $(\cG_{i-1})_C$ respectively.
By finding the rightmost variable in $\rho^k(B)$ for $k=1,2,\ldots$ and
using our records of which variables have values in $\Sigma_i^*$ for some $i$,
we can locate the variable $D_1$ in polynomial time, and similarly $D_2$.
(In fact we do that in any case to establish whether we are in the situation
where modification is required.)

\begin{figure}
\begin{picture}(300,200)(25,-30)
\put(100,200){\oval(16,12)}\put(96,197){\small $A$}
\put(95,195){\line(-4,-3){38}}
\put(105,195){\line(4,-3){38}}
\put(50,160){\oval(36,12)}\put(36,157){\small $B\!=\!B_0$}
\put(45,153){\line(-1,-2){13}}
\put(50,153){\line(0,-1){26}}
\put(55,153){\line(1,-2){13}}
\put(30,120){$\underbrace{\cdot\cdots\cdots}$}
\put(40,105){$\beta_0$}
\put(70,120){\oval(16,12)}\put(64,117){\small $B_1$}
\put(65,115){\line(-1,-1){32}}
\put(65,114){\line(-1,-2){13}}
\put(67,113){\line(0,-1){28}}
\put(30,80){$\underbrace{\cdot\cdots\cdots}$}
\put(40,65){$\beta_1$}
\put(70,80){\oval(16,12)}\put(64,77){\small $B_2$}
\put(70,62){$\vdots$}
\put(70,50){$\vdots$}
\put(75,40){\oval(24,12)}\put(64,37){\small $B_{r-1}$}
\put(65,34){\line(-1,-1){30}}
\put(67,34){\line(-1,-2){14}}
\put(70,34){\line(0,-1){28}}
\put(25,0){$\underbrace{\cdots\cdots}$}
\put(35,-15){$\beta_{r-1}$}
\put(75,0){\oval(36,12)}\put(58,-3){\small $B_r\!=\!D_1$}
\put(73,-6){\line(-1,-1){10}}
\put(75,-6){\line(0,-1){10}}
\put(77,-6){\line(1,-1){10}}
\put(150,160){\oval(36,12)}\put(135,157){\small $C_0\!=\!C$}
\put(145,153){\line(-1,-2){13}}
\put(150,153){\line(0,-1){28}}
\put(155,153){\line(1,-2){13}}
\put(140,120){$\underbrace{\cdot\cdots\cdots}$}
\put(150,105){$\gamma_0$}
\put(130,120){\oval(16,12)}\put(124,117){\small $C_1$}
\put(135,114){\line(1,-1){28}}
\put(133,114){\line(1,-2){14}}
\put(130,114){\line(0,-1){28}}
\put(140,80){$\underbrace{\cdot\cdots\cdots}$}
\put(150,65){$\gamma_1$}
\put(130,80){\oval(16,12)}\put(124,77){\small $C_2$}
\put(130,62){$\vdots$}
\put(130,50){$\vdots$}
\put(130,38){$\vdots$}
\put(135,30){\oval(24,12)}\put(124,27){\small $C_{s-1}$}
\put(135,24){\line(1,-1){30}}
\put(133,24){\line(1,-2){14}}
\put(130,24){\line(0,-1){26}}
\put(148,-12){$\underbrace{\cdots\cdots}$}
\put(158,-25){$\gamma_{s-1}$}
\put(128,-10){\oval(36,12)}\put(110,-13){\small $D_2\!=\!C_s$}
\put(126,-16){\line(-1,-1){10}}
\put(128,-16){\line(0,-1){10}}
\put(130,-16){\line(1,-1){10}}
\put(300,200){\oval(16,12)}\put(296,197){\small $A$}
\put(295,194){\line(-4,-3){38}}
\put(300,194){\line(0,-1){27}}
\put(305,194){\line(4,-3){38}}
\put(300,160){\oval(16,12)}\put(296,157){\small $D$}
\put(298,153){\line(-6,-79){12}}
\put(302,153){\line(6,-79){12}}
\put(250,160){\oval(38,12)}\put(234,157){\small $B'\!=\!B'_0$}
\put(245,153){\line(-1,-2){14}}
\put(250,153){\line(0,-1){28}}
\put(255,153){\line(1,-2){14}}
\put(230,120){$\underbrace{\cdot\cdots\cdots}$}
\put(240,105){$\beta_0$}
\put(270,120){\oval(16,12)}\put(264,117){\small $B'_1$}
\put(265,113){\line(-1,-1){32}}
\put(267,113){\line(-1,-2){14}}
\put(270,113){\line(0,-1){28}}
\put(230,80){$\underbrace{\cdot\cdots\cdots}$}
\put(240,65){$\beta_1$}
\put(270,80){\oval(18,12)}\put(264,77){\small $B'_2$}
\put(270,62){$\vdots$}
\put(270,50){$\vdots$}
\put(272,40){\oval(26,12)}\put(261,37){\small $B'_{r-1}$}
\put(263,34){\line(-1,-1){30}}
\put(267,34){\line(-1,-2){14}}
\put(224,-1){$\underbrace{\cdots\cdots}$}
\put(234,-16){$\beta_{r-1}$}
\put(350,160){\oval(38,12)}\put(335,157){\small $C'_0\!=\!C'$}
\put(345,153){\line(-1,-2){14}}
\put(350,153){\line(0,-1){28}}
\put(355,153){\line(1,-2){14}}
\put(340,120){$\underbrace{\cdot\cdots\cdots}$}
\put(350,105){$\gamma_0$}
\put(330,120){\oval(16,12)}\put(324,117){\small $C'_1$}
\put(335,113){\line(1,-1){28}}
\put(333,113){\line(1,-2){14}}
\put(330,113){\line(0,-1){28}}
\put(340,80){$\underbrace{\cdot\cdots\cdots}$}
\put(350,65){$\gamma_1$}
\put(330,80){\oval(18,12)}\put(324,77){\small $C'_2$}
\put(330,62){$\vdots$}
\put(330,50){$\vdots$}
\put(330,38){$\vdots$}
\put(335,30){\oval(26,12)}\put(324,28){\small $C'_{s-1}$}
\put(335,24){\line(1,-1){30}}
\put(333,24){\line(1,-2){14}}
\put(348,-10){$\underbrace{\cdots\cdots}$}
\put(358,-25){$\gamma_{s-1}$}
\put(286,-12){\oval(16,12)}\put(280,-15){\small $D_1$}
\put(284,-18){\line(-1,-1){10}}
\put(286,-18){\line(0,-1){10}}
\put(288,-18){\line(1,-1){10}}
	\put(313,-12){\oval(16,12)}\put(306,-15){\small $D_2$}
\put(311,-18){\line(-1,-1){10}}
\put(313,-18){\line(0,-1){10}}
\put(315,-18){\line(1,-1){10}}
\end{picture}
\caption{Adjusting the \slp below $A$}
\label{fig:adjustment}
\end{figure}
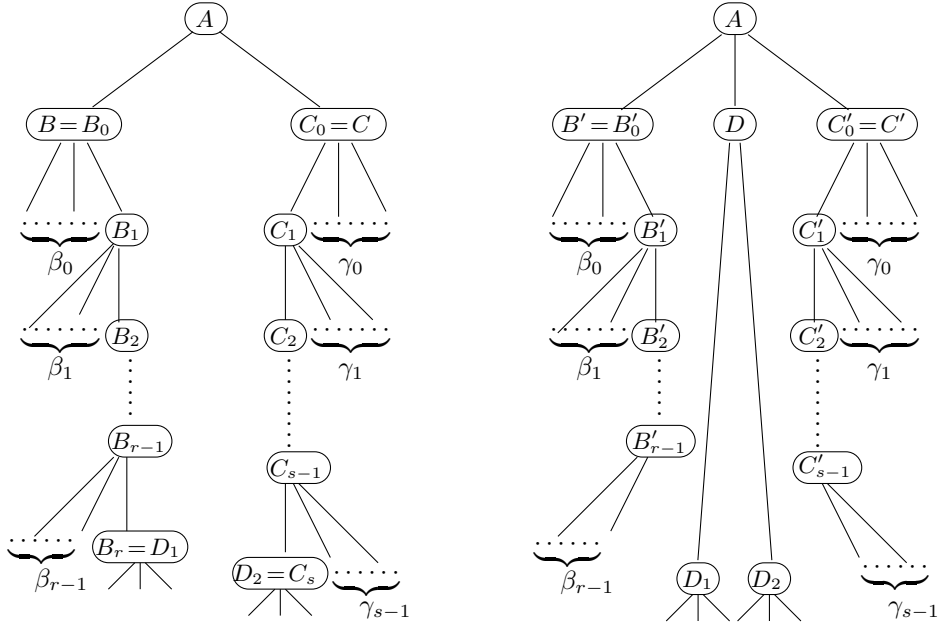

Now we introduce a new variable $D$, and define $\rho_i(D):=D_1D_2$.
We also introduce new variables $B',C'$ and set $\rho_i(A)=B'DC'$. 
We want to define $\rho_i(B')$ and $\rho_i(C')$ to ensure that 
$\val_{\cG_i}(B')=v_1$, and $\val_{\cG_i}(C')=v_2$,
and this motivates the definitions that now follow.

First we note the existence of a sequence of variables
$B_0=B,B_1,\ldots, B_r=D_1$ in $V_{i-1}$,
for which each $B_{j+1}$ is the last letter of $\rho_{i-1}(B_j)$;
that is $\rho_{i-1}(B_j)=\beta_j B_{j+1}$, for some
$\beta_j \in V_{i-1}^*$.

Similarly, we find a sequence of variables
$C_0=C,C_1,\ldots, C_s=D_2$ in $V_{i-1}$,
such that, for each $j$, $\rho_{i-1}(C_j)=C_{j+1}\gamma_j$, for some
$\gamma_j \in V_{i-1}^*$.
We define $B'_0:=B'$, $C'_0:=C'$ and introduce new variables $B'_j$
for each $j=1,\ldots,r-1$, and $C_j'$ for each $j=1,\ldots,s-1$.
We define 
\[
V_i := V_{i-1} \cup \{B'_0,B'_1,\ldots B'_{r-1},C_0',C'_1,\ldots C'_{s-1},D\}.\]
Then we define 
\[ \rho_i(B'_0):=\beta_0B'_1,\quad
\rho_i(B'_j):=\beta_jB'_{j+1}\ 
\hbox{\rm for}\,j=1,\ldots,r-2,\,\quad
\rho_i(B'_{r-1}):=\beta_{r-1},\]
and similarly
\[ \rho_i(C'_0):=\gamma_0C'_1,\quad
\rho_i(C'_j):=\gamma_jC'_{j+1}\ 
\hbox{\rm for}\,j=1,\ldots,s-2,\,\quad
\rho_i(C'_{s-1}):=\gamma_{s-1}.\]
and $\rho_i(E):= \rho_{i-1}(E)$ for all variables
$E \in V_{i-1}\setminus \{ A \}$.
Figure~\ref{fig:adjustment} illustrates this adjustment of
$\cG_{i-1}$ to $\cG_i$.

This completes our construction of $\cG_i$, which is certainly acyclic.
Our definition of new variables and of their images under $\rho_i$
was designed to ensure that $\val_{\cG_i}(A_i)=\val_{\cG}(A_i)$.
We note that $\Ht_{\cG_i}(B') = \Ht_{\cG'_{i-1}}(B)$,
$\Ht_{\cG_i}(C') = \Ht_{\cG_{i-1}}(C)$, and
$\Ht_{\cG_i}(D) \le \Ht_{\cG_{i-1}}(A_i)$ (equality can occur here if
$D_1=B$ or $D_2=C$).
So $\Ht_{\cG_i}(A_i) \le \Ht_{\cG_{i-1}}(A_i)+1$, for each $i$, and hence
$\Ht_{\cG_i}(S) \le \Ht_{\cG_{i-1}}(S)+1$.

We need to verify our claim that, for all variables $A' \in \cG_i$ that are not
in the sequence $A_{i+1},\ldots,A_n$, every component of $\val_{\cG_i}(A')$
has a root in $(\cG_i)_{A'}$. 
This is true for all $A' \in V_{i-1} \setminus \{A_i,A_{i+1},\ldots,A_n\}$,
because $\rho_i(A')=\rho_{i-1}(A')$ and hence
 $\val{_{\cG_i}}(A')=\val_{\cG}(A')$ for all such $A'$.
Our construction of $\cG_i$ was designed to ensure that our claim is true for
$A'=A_i$, and it is clearly true for $A'=D,D_1$ and $D_2$.
Finally, for the new variables $B_j'$ and $C_j'$, the components of
$\val_{\cG_i}(B_j')$ and $\val_{\cG_i}(C_j')$ are components of
$\val_{\cG_{i-1}}(B_j)$ and $\val_{\cG_{i-1}}(C_j)$ respectively and,
by the inductive hypothesis, they have roots in
$(\cG_{i-1})_{B_j}$ and $(\cG_{i-1})_{C_j}$.
It follows immediately from the definitions of $\rho_i(B_j')$ and
$\rho_i(C_j')$ that these components have roots in
$(\cG_{i})_{B_j'}$ and $(\cG_{i})_{C_j'}$.

After the process is complete, we have $\Ht_{\cG'}(S) \le n+\Ht_{\cG}(S)
\le 2n$. So, during each of the $n$ steps of this process the number of
variables we have added has been at most than $4n$, and hence the process is
bounded by a polynomial in $|V|$, so certainly by a polynomial in $|\cG|$.
We complete the proof by putting the final \slp into Chomsky normal
form. Since this involves only the addition of new variables, it does
not affect the property that all components of $w$ have roots.
\end{proof}

\begin{definition}\label{def:split}
Let $w$ be a word over $\Sigma$ and suppose that $u:=w[i:j)$ is a
subword of $w$.  
We say that $u$ {\em splits a component} of $w$ if $u$ starts or ends
part way through a component of $w$, but is not a subword of a single component;
otherwise we say that  $u$ {\em splits no components} of $w$.
\end{definition}

So if $u$ splits no component of $w$, then either $u$ is a proper subword of a
component of $w$, or $u$ is a concatenation of components of $w$
and of subwords in $(\Sigma \setminus \cH)^*$. 
In that second case, there exist integers $k,l$ with $k<l$ such that
$\hat{u} = \hat{w}[k:l)$; we shall sometimes choose to write
$w[[k:l))$ rather than $w[i:j)$ as a notation for this subword $u$ of $w$.

Now let $\cG=(V,S,\rho)$ be an \slp, \cslp, \tslp or \tcslp for the group $G$,
and let $w:= \val(\cG)$. 
We say that $\cG$ {\em splits no components} (or is {\em non-splitting}) if 
for any variable $A$ of $\cG$, whenever $\val(A)$ occurs as a subword of $w$
with root $A$, then that that subword splits no components of $w$. 

Now suppose that $A,B$ are variables in a \cslp $\cG$ and that
$\rho(A)=B[i:j)$ (so that $\val_\cG(A)=\val_\cG(B)[i:j)$). We say that 
the cut operator $B[i:j)$ {\em splits a component} if
the subword $w_B[i:j) = w_A$ splits a component of $w$, or if
it is a proper subword of a component of $w$;
otherwise $B[i:j)$ is called {\em non-splitting}.
Notice that for cut operators to be non-splitting,
we are also excluding the possibility of
$w_A$ being a proper subword of a component of $w_B$.
If $B[i:j)$ is a non-splitting cut-operator 
and $k,l$ are the integers defined by
$\val(B)[i:j) = \val(B)[[k:l))$, it is often 
convenient to specify the cut operator in terms of $k$ and 
$l$ rather than $i$ and $j$,  that is, as $B[[k:l))$.
In this case,
we say that the cut operator is {\em specified relative to compression}.

In general
the \slps (and \cslps, \tslps and \tcslps) that we shall construct 
in this article, as well as cut operators within the
\tcslps, will not split components,
and the cut operators 
will be specified relative to compression.

\begin{remark}\label{rem:subword}
Suppose that $\cG$ is an \slp as in the conclusion of
Proposition~\ref{prop:comproot} (where it is called $\cG'$); that is,
for each variable $A$ of $\cG$, every component of $\val(A)$
has a root in $\cG_A$. Then $\cG$ splits no components.
Moreover, if $\cG$ is trimmed and $\val(\cG)$ is $\nf$-reduced, then so is
$\val(A)$, for every variable $A$ of $\cG$.
\end{remark}
\begin{proof}
Suppose that the subword $u = \val(A)$ of $w := \val(\cG)$ splits a component
of $w$; so we have $u = u_1u_2$ with $u_1$ and $u_2$ nonempty, where either
$u_1$ is a proper suffix or $u_2$ is a proper prefix of a component $v$ of $w$.
But, by assumption, $v$ has a root $B$ in $\cG$, and these occurrences
of $\val(A)$ and $\val(B)$ in $w$ overlap without one being a subword
of the other, which is not possible. The final assertion now follows from
Remark~\ref{rem:subclosure}.
\end{proof}

In order to build \cslps that do not split components, we shall need the
following result.

\begin{corollary} \label{cor:gamlen}
Let $\cG$ be an \slp over our selected generating set $\Sigma$ for $G$, 
let $w := \val(\cG)$, and suppose that every component of $w$ has a root in
$\cG$ (and hence $\cG$ splits no components). 
Then given $k,l$ with $0\leq k<l\leq |\hat{w}|$, 
in time that is polynomial in
$|\cG|$, we can compute the integers $i,j$ with $w[i:j) = w[[k:l))$.
Conversely, given $i$ and $j$ such that the subword $w[i:j)$ of $w$ is a union
of complete components of $w$ and subwords in $(\Sigma \setminus \cH)^*$,
we can compute $k$ and $l$ with $w[[k:l))=w[i:j)$, in polynomial time.
\end{corollary}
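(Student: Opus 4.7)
The plan is to precompute, for each variable $A$ of $\cG$, both the uncompressed length $\ell(A) := |\val(A)|$ and a compressed length $\widehat{\ell}(A) := |\widehat{\val(A)}|$, and then descend through $\cG$ to translate between positions in $w$ and positions in $\widehat{w}$. By Proposition~\ref{prop:slp_results}(i) I may assume $\cG$ is in Chomsky normal form, which preserves the property that every component of $w$ has a root; and by Proposition~\ref{prop:comproot} I have a classification of which variables are \emph{parabolic} (meaning $\val(A)\in\Sigma_i^*$ for some $i$) and which are \emph{mixed}. I compute $\ell(A)$ using Proposition~\ref{prop:slp_results}(iii), and I set $\widehat{\ell}(A):=1$ when $A$ is parabolic with $\val(A)\ne\emptyword$ and $\widehat{\ell}(A):=\widehat{\ell}(B)+\widehat{\ell}(C)$ when $A$ is mixed with $\rho(A)=BC$; this bottom-up pass runs in polynomial time.

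The correctness of this recursion rests on the following structural observation, which I would verify first: whenever $A$ is mixed with $\rho(A)=BC$, the boundary between $\val(B)$ and $\val(C)$ inside $\val(A)$ is a component boundary of $\val(A)$. Otherwise some component $u$ of $\val(A)$ would straddle the $B/C$ split; by the non-splitting property of $\cG$ (Remark~\ref{rem:subword}), $u$ would be a complete component of $w$ and so, by hypothesis, have a root $D$. But the leaves of that occurrence of $D$ lie in both halves of $A$'s subtree in the derivation tree, forcing their nearest common ancestor to be $A$ itself and hence $\val(A)=u$, contradicting mixedness of $A$. Thus $\widehat{\val(A)}=\widehat{\val(B)}\widehat{\val(C)}$ as a plain concatenation, which justifies the additive recursion for $\widehat{\ell}$.

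Converting a clean $\widehat{w}$-index $k$ to its $w$-index is then accomplished by the following descent: maintain a current variable $A$ (initially $S$), a remaining offset $k \in [0,\widehat{\ell}(A)]$, and a running $w$-offset $i$ (initially $0$). Return $i$ if $k=0$, and return $i+\ell(A)$ if $k=\widehat{\ell}(A)$; otherwise $0<k<\widehat{\ell}(A)$ forces $A$ to be mixed with $\rho(A)=BC$, and I recurse into $B$ (keeping $k$ and $i$) if $k\leq\widehat{\ell}(B)$, and into $C$ with $k:=k-\widehat{\ell}(B)$ and $i:=i+\ell(B)$ otherwise. Running this on each of $k$ and $l$ yields the required pair $(i,j)$. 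The reverse conversion, taking $i,j$ bounding a union of complete components back to $k,l$, uses the symmetric descent that splits on whether $i\leq\ell(B)$ and accumulates $\widehat{\ell}(B)$ on the running $\widehat{w}$-offset when entering $C$.

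The main subtlety to check is that a descent never needs to split a parabolic variable at an interior offset. This is where the cleanness of the inputs is used: the structural observation above guarantees that whenever the descent passes into a parabolic child, the offset it carries is either $0$ or that child's full value, both of which are returned immediately at the top of the loop; and the hypothesis on $(i,j)$ in the reverse direction ensures the same alignment with component boundaries. Each descent traverses at most $\Ht_\cG(S)\le|\cG|$ variables and performs arithmetic on integers of bit-length $O(|\cG|)$ (since $|w|\leq 3^{|\cG|/3}$ by Proposition~\ref{prop:slp_results}(ii)), so the overall running time is polynomial in $|\cG|$.
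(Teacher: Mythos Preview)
Your proposal is correct. The paper takes a slicker but essentially equivalent route: rather than computing $\widehat{\ell}(A)$ explicitly and descending by hand, it observes that once one knows which variables are parabolic one can simply \emph{declare} each such variable to be a terminal symbol (namely the element of $\widehat{\Sigma}$ it represents), thereby reinterpreting $\cG$ as an \slp $\widehat{\cG}$ over a finite subset of $\widehat{\Sigma}$ with $\val(\widehat{\cG})=\widehat{w}$. The conversion between $(k,l)$ and $(i,j)$ then reduces to computing lengths and extracting prefixes/subwords of $\widehat{\cG}$ and of $\cG$, for which one cites Proposition~\ref{prop:slp_results}\,(iii) and (iv) directly.

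Both arguments rest on the same structural fact---that for a mixed variable $A$ with $\rho(A)=BC$ the $B/C$ boundary is a component boundary---which you prove carefully and the paper leaves implicit in the phrase ``regarding the roots of components of $w$ as new terminals''. The paper's approach buys brevity by reducing to black-box \slp algorithms; yours is more self-contained and makes the polynomial bound completely explicit. Either is acceptable.
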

\begin{proof}
By regarding the roots of components of $w$ as new terminals, we
can regard $\cG$ as an \slp $\widehat{\cG}$ over some finite subset of the
infinite alphabet $\widehat{\Sigma}$ with  $\val({\widehat{\cG}})=\hat{w}$.
Our lists of variables $A$ of $\cG$ for which $\val(A) \in \Sigma_i^*$ for
some $i$ enable us to identify such variables.
We can then use Proposition \ref{prop:slp_results}\,(iv) to
compute \slps over $\widehat{\Sigma}$ with values
$\hat{w}[:l)$ and $\hat{w}[k:l)$. Then by reinterpreting them as
\slps over $\Sigma$, we can (by Proposition \ref{prop:slp_results}\,(iii))
compute their lengths and thereby compute $i$ and $j$.

For the converse, we compute \slps for $w[:j)$ and $w[i:j)$, regard
then as \slps with values $\hat{w}[:l)$ and $\hat{w}[k:l)$
over a finite subset of  $\widehat{\Sigma}$, and then compute their lengths.
\end{proof}

\begin{lemma} \label{lem:ftwd}
Let $\cG$ be an \slp over our selected generating set $\Sigma$ for $G$, 
and let $w := \val(\cG)$.
Suppose that the components of $w$ are all shortlex reduced and that
$\hat{w}$ is a $(\lambda,c)$-quasigeodesic that does not backtrack for
some constants $\lambda \ge 1$ and $c \ge 0$. Then, in
time polynomial in $|\cG|$, we can construct an \slp with value $\nf(w)$.
\end{lemma}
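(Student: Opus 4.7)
My plan is to combine the asynchronous biautomatic structure $\cL_0$ supplied by Proposition~\ref{prop:automatic} with the two-tape word-difference automaton supplied by Corollary~\ref{cor:wdfsa}, and then symbolically simulate the resulting automaton on the \slp.

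First I would apply Proposition~\ref{prop:comproot} to replace $\cG$ by an \slp $\cG'$ in Chomsky normal form with the same value and with the property that every component of $w$ has a root in $\cG'$; by Remark~\ref{rem:subword} this \slp is non-splitting, and moreover we have an explicit list of the variables whose value lies in some $\Sigma_i^*$. Set $v := \nf(w)$. Since $w$ satisfies the hypotheses of the lemma by assumption, it satisfies hypotheses (a) and (b) of Proposition~\ref{prop:ftqd}; and since $v \in \cL_0$, the word $v$ also satisfies (a) (its parabolic components are in the specified shortlex biautomatic structure) and (b) with $(\lambda,c) = (1,0)$, because $\widehat{v}$ is geodesic in $\widehat{\Gamma}$ and hence cannot backtrack. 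So the pair $(w,v)$ lies in $L(\cA)$ where $\cA = \cA(\lambda,c,1_G,1_G)$ is the deterministic 2-tape automaton from Corollary~\ref{cor:wdfsa} (choosing $\lambda,c$ large enough to accommodate both $\widehat{w}$ and $\widehat{v}$). Intersecting $\cA$ with the 1-tape finite state automaton recognising $\cL_0$ from Proposition~\ref{prop:automatic} yields a deterministic 2-tape finite state automaton $\cB$ for which $v$ is the \emph{unique} word with $(w,v) \in L(\cB)$.

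The remaining task is therefore to compute, from the \slp for $w$, an \slp for the unique second-tape output $v$ of $\cB$. I would do this by divide-and-conquer on $\cG'$. Proposition~\ref{prop:ftqd}(ii) bounds the asynchronous offset between the two tape heads of $\cB$ by a constant $k'$ depending only on $G$ and $\Sigma$; so there are only finitely many ``local configurations'' (state of $\cB$ together with a buffered prefix of pending output of length at most $k'$). For each variable $A$ of $\cG'$ I would compute a transducer summary consisting, for each starting local configuration, of the ending local configuration and a variable of the \slp under construction whose value is the output emitted by $\cB$ while reading $\val(A)$. Summaries compose across a rule $\rho(A) = BC$ by chaining the $B$-summary into the $C$-summary and concatenating the associated output variables; since each summary has constant size and each composition adds only constantly many new variables, the total size remains polynomial in $|\cG'|$. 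Reading the summary entry at the start variable from the initial configuration of $\cB$ yields the required \slp with value $\nf(w)$.

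The main obstacle is justifying that the simulation really is polynomial, i.e.\ that the asynchronous two-tape machine $\cB$ admits a compressed simulation: this is exactly what the boundedness of the offset $k'$ from Proposition~\ref{prop:ftqd}(ii) is for, ensuring that at each step of the bottom-up computation only finitely many local configurations need to be tracked per variable. Uniqueness of $v$ under $\cB$ (from the uniqueness in $\cL_0$) ensures that the summaries are well-defined deterministic objects rather than branching nondeterministic ones, which is essential for the output \slp to be built by simple concatenation of existing \slp-variables rather than by any blow-up-inducing disjunction.
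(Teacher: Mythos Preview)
Your strategy is close in spirit to the paper's: both perform a bottom-up pass over the variables of a Chomsky-normal-form \slp, building output variables indexed by bounded ``boundary data'' coming from the fellow-travelling constant $\ee'$ of Proposition~\ref{prop:ftqd}. The paper makes this explicit: for each variable $A$ and each pair $\alpha,\beta\in\Sigma^*$ with $|\alpha|,|\beta|\le\ee'$ it attempts to define a variable $A_{\alpha,\beta}$ of the output \slp with value $\nf(\alpha\,\val_\cG(A)\,\beta^{-1})$; for a production $\rho_\cG(A)=BC$ it searches over all intermediate $\gamma$ with $|\gamma|\le\ee'$ and uses the word acceptor (Proposition~\ref{prop:slp_results}(v)) to test which concatenation $\val_\cS(B_{\alpha,\gamma})\val_\cS(C_{\gamma,\beta})$ lies in $\cL_0$.

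The gap in your version is the determinism claim. Global uniqueness of $v\in\cL_0$ does not make the induced one-input-one-output machine locally deterministic: a deterministic two-tape \emph{acceptor} on pairs $(w,v)$ need not become a deterministic \emph{transducer} when you fix tape~1 and try to generate tape~2. Concretely, after reading a prefix $w_1$ of $w$ the current word-difference state may admit several next output letters $y\in\Sigma$, each keeping the word-difference short; which of these lies on the true $v$ can depend on later letters of $w$ that have not yet been read. (Proposition~\ref{prop:ftqd}(ii) bounds only the \emph{spread} of admissible tape-2 positions for a fixed tape-1 position, not the offset between the heads, so it gives finitely many configurations but not a single-valued next-output function.) Hence for a subword $\val_\cG(B)$ and a given starting configuration there may be several candidate (ending configuration, output) pairs, and your ``summary'' is not the single-valued object you describe. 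The paper sidesteps exactly this issue by \emph{not} assuming determinism: it enumerates all boundary pairs $(\alpha,\beta)$ and all midpoints $\gamma$, and filters with the word acceptor to select the concatenations that are actually $\nf$-reduced. Your scheme can be repaired the same way---carry all finitely many branches per variable and prune using the word acceptor---but once you do that you have essentially reproduced the paper's construction.
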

\begin{proof}
We assume that $\cG$ is trimmed and in Chomsky normal form and that
all of its components have roots. So, in particular, for all variables $A$ of
$\cG$, $\val_\cG(A)$ arises (possibly multiple times) as a subword of $w$.
The hypotheses allow us to apply Proposition \ref{prop:ftqd} with $v = w$,
$u = \nf(w)$, and $w_1=w_2=1$. Let $p$ and $q$  be the paths in $\Gamma$ that 
are labelled by
$\nf(w)$ and $w$, and $\hat{p}$ and $\hat{q}$ the corresponding paths in
$\widehat{\Gamma}$. Then $p$ and $q$ $\ee'$-fellow travel, where $\ee'$ is a constant that
depends only on $G$, $\Sigma$, $\lambda$ and $c$, and 
all vertices on $\hat{q}$ have at least one corresponding vertex on
$\hat{p}$.  As in the proof of  Proposition \ref{prop:ftqd}, we define
$\ee_1 := \ee(\lambda,c,0)$ and $\ee_2 := \ee(\lambda,c,\ee_1)$,
where $\ee()$ is as defined in Proposition~\ref{prop:bcpp} 

Now consider some instance of $\val(A)$ as a subword $w_1$ of $w$ that
is derived from $A$ and labels a subpath $q_1$ of $q$. As we observed in
Remark~\ref{rem:subword}, either (this instance of) $w_1$ is a concatenation
of complete components of $w$ and subwords in $(\Sigma \setminus \cH)^*$,
or else $w_1 \in \Sigma_i^*$ for some $i$ and $w_1$ is a proper subword of a
component of $w$.

In the first of these cases (Case 1), the start and end vertices of subpath $q_1$ of $q$ are also vertices of
$\hat{q}$, and the fact that vertices on
$\hat{q}$ have corresponding vertices on $\hat{p}$ implies that there
exist $\alpha,\beta \in \Sigma^*$ with $|\alpha|,|\beta| \le \ee'$ such that
$\nf(\alpha\val_\cG(A)\beta^{-1})$ is a subword of $\nf(w)$ that labels
a subpath $p_1$ of $p$ whose start and end vertices are also vertices of $\hat{p}$.  (Note that we
are using Remark~\ref{rem:subclosure} here, which ensures that the subword of
$\nf(w)$ labelled by this subpath of $p$ is $\nf$-reduced.)

In the second case we see from the proof of Proposition \ref{prop:ftqd} that,
if (Case 2a) the component of $w$ of which $w_1$ is a proper subword has length
greater than $\ee_2$, then there exist $\alpha,\beta \in \Sigma_i^*$ with
$|\alpha|,|\beta| \le \ee'$ such that $\nf(\alpha\val_\cG(A)\beta^{-1})$
labels a subword of a connected component of $\nf(w)$.  (Here we using the
fact that the connected components are shortlex reduced words and so their
subwords are also shortlex reduced and hence $\nf$-reduced.)

In Cases 1 and 2a, we say that this instance of the subword $w_1=\val_\cG(A)$
of $w$ {\em corresponds to} the subpath of $p$ that is labelled by
$\nf(\alpha\val_\cG(A)\beta^{-1})$.
If $w_1$ is a proper subword of a component of $w$ of length at most $\ee_2$
(Case 2b), then we do not attempt to define a corresponding subpath of $p$. 

We shall construct an \slp $\cS$ with value $\nf(w)$. As usual, we do this
by processing the variables of $\cG$ in order of increasing height.
For each variable $A$ of $\cG$ and each pair of words
$\alpha,\beta \in \Sigma^*$ with $|\alpha|,|\beta| \le \ee'$, we attempt
to define a variable $A_{\alpha,\beta}$ of $\cS$ with
$\val_{\cS}(A_{\alpha,\beta}) = \nf(\alpha\val_\cG(A)\beta^{-1})$.
We shall not necessarily succeed in doing this for every such $\alpha,\beta$,
but we shall do so at least for each $\alpha,\beta$ for which
$\val(A)$ corresponds to $\nf(\alpha\val_\cG(A)\beta^{-1})$ as described above.
After carrying out this process for all variables $A$ of $\cG$, we complete the
proof by letting the start variable of $\cS$ be $S_{\emptyword,\emptyword}$
for $S = S_\cG$.

Suppose first that either $A$ has height $0$, or that $\val(A) \in \Sigma_i^*$
for some $i$ and $|\val(A)| \le \ee_2$.  Then we compute
$\nf(\alpha \val(A) \beta^{-1})$ for all $\alpha,\beta \in \Sigma^*$
with $|\alpha| ,|\beta|\leq \ee'$.
Since $|\alpha \val(A) \beta|$ is bounded above by the constant $\ee_2+2\ee'$,
we can do this in time bounded by a constant, by
using the word acceptor in the asynchronous automatic structure of $G$ to
enumerate the words in the accepted language in order of increasing length until
one is found that is equal in $G$ to $\alpha \val(A) \beta^{-1}$,
and then define $A_{\alpha,\beta}$ with
$\rho_{\cS}(A_{\alpha,\beta}) = \nf(\alpha \val(A) \beta^{-1})$.

Otherwise, we have $\rho_\cG(A) = BC$ for variables $B$ and $C$ of $\cG$ that
we have processed already. We proceed as follows for each pair
$\alpha,\beta \in \Sigma^*$ with $|\alpha|,|\beta| \le \ee'$.
For all words $\gamma \in \Sigma^*$ with $|\gamma| \le \ee'$ for which we have
defined variables $B_{\alpha,\gamma}$ and $C_{\gamma,\beta}$ for $\cS$,
we check whether the word
$\val_{\cS}(B_{\alpha,\gamma})\val_{\cS}(C_{\gamma,\beta})$
lies in  the language of the word acceptor, which we can do in polynomial time
by Proposition~\ref{prop:slp_results}\,(v).
If so, then we introduce the new variable $A_{\alpha,\beta}$ for $\cS$, define
$\rho_{\cS}(A_{\alpha,\beta}) = B_{\alpha,\gamma} C_{\gamma,\beta}$, and
move on to the next pair of words $\alpha,\beta$.

\begin{figure}
\begin{picture}(340,85)(0,-20)
\put(0,50){\circle*{4}}
\put(75,0){\circle*{4}}
\put(145,50){\circle*{4}}
\put(135,0){\circle*{4}}
\put(170,50){\circle*{4}}
\put(170,0){\circle*{4}}
\put(205,50){\circle*{4}}
\put(215,0){\circle*{4}}
\put(340,50){\circle*{4}}
\put(265,0){\circle*{4}}

\put(0,50){\line(1,0){340}}
\put(0,50){\line(3,-2){75}}
\put(75,0){\line(1,0){190}}
\put(265,0){\line(3,2){75}}

\put(154,0){\vector(1,0){0}}
\put(194,0){\vector(1,0){0}}
\put(159,50){\vector(1,0){0}}
\put(189,50){\vector(1,0){0}}
\put(147,5){$w_2$}
\put(186,5){$w_3$}

\put(4,-8){\vector(1,0){166}}
\put(170,-8){\line(1,0){166}}
\put(165,-16){$w$}
\put(4,58){\vector(1,0){166}}
\put(170,58){\line(1,0){166}}
\put(155,63){$\nf(w)$}

\put(145,50){\vector(-1,-5){6}}
\put(139,20){\line(-1,-5){4}}
\put(170,50){\vector(0,-1){28}}
\put(170,22){\line(0,-1){22}}
\put(205,50){\vector(1,-5){6}}
\put(211,20){\line(1,-5){4}}
\put(150,42){$u_2$}
\put(182,42){$u_3$}

\put(128,23){$\alpha$}
\put(173,23){$\gamma$}
\put(213,23){$\beta$}

\end{picture}
\caption{Subwords of fellow travelling words $w$ and $\nf(w)$}
\label{fig:ftsubwd}
\end{figure}
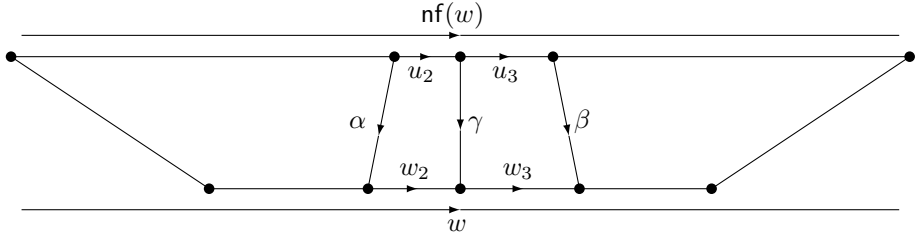

We need to show that we succeed in defining $A_{\alpha,\beta}$ whenever
$A$ corresponds to 
$\nf(\alpha\val_\cG(A)\beta^{-1})$ as described above.
Let $w_1 := \val_\cG(A)$, $w_2 := \val_\cG(B)$ and $w_3 := \val_\cG(C)$,
so $w_1=w_2w_3$.  Since we are assuming that $|\val_\cG(A)| > \ee_2$,
all instances of $w_1$ as subwords of $w$ must lie in Case 1 or Case 2a,
and (since all components of $\cG$ have roots)
we see then that the same applies to the instances of $w_2$ and $w_3$
that arise from the derivation $\rho(A) = BC$.  So the vertex of $\Gamma$
corresponding to the end of the subword $\val_\cG(B)$ is at
distance at most $\ee'$ from a vertex in the corresponding subword
$u_1 := \nf(\alpha\val_\cG(A)\beta^{-1})$ of $\nf(w)$ such that
the associated subwords $u_2$ and $u_3$ of $u_1$ correspond to $w_2$
and $w_3$, respectively.
See Figure \ref{fig:ftsubwd}.

In other words,
there exists $\gamma \in \Sigma^*$ with $|\gamma| \le \ee'$ such
that $\val_\cG(B)$ corresponds to $\nf(\alpha \val_\cG(B) \gamma^{-1})$ and
$\val_\cG(C)$ corresponds to $\nf(\gamma \val_\cG(C) \beta^{-1})$, in which
case the variables $B_{\alpha,\gamma}$ and $C_{\gamma,\beta}$ have been
defined, and so we successfully define $A_{\alpha,\beta}$
with $\rho_{\cS}(A_{\alpha,\beta}) = B_{\alpha,\gamma} C_{\gamma,\beta}$.
\end{proof}

Note that the \slp $\cS$ that is constructed in the above lemma can have size
up to $(\ee')^2$ times the size of $\cG$. The following result,
guarantees a size that is bounded as a function of the length
of the derived word $\hat{w}$.

\begin{proposition}\label{prop:slpshort}  
Let $\cG$ be an \slp over our selected generating set $\Sigma$ for $G$, and
let $w:=\val(\cG)$.  Then we can construct an \slp $\cS$ with value equal to
$\nf(w)$ and with size at most
$\max(\CC|\hat{w}|\log(|w|),1)$ in time that
is bounded by $\jj(|\hat{w}|,|\cG|)$, for some constant $\CC$ and some
(increasing) polynomial function $\jj()$.
\end{proposition}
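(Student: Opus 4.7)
The plan is to first produce some \slp $\cS_0$ with value $\nf(w)$ in polynomial time, and then rebuild it compactly by exploiting the decomposition $\nf(w) = \alpha_0 v_1 \alpha_1 \cdots v_m \alpha_m$ into parabolic components $v_j \in \Sigma_{i_j}^*$ and non-parabolic strings $\alpha_j$, available since $\nf(w) \in \cL_0$. Because $\widehat{\nf(w)}$ is geodesic in $\widehat{\Gamma}$, we have $m + \sum_j |\alpha_j| = |\widehat{\nf(w)}| \le |\widehat{w}|$; moreover, each $v_j$ is shortlex-reduced in the relevant $H_i$ over $\Sigma_i$, so Proposition~\ref{prop:abslp} will produce a compact \slp of size $O(\log|v_j|)$ for it.

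For the first stage I would apply Proposition~\ref{prop:comproot} to $\cG$, so that all components of $w$ have roots and parabolic-valued variables are flagged, and then use Proposition~\ref{prop:abslp} to replace each component-root by a compact \slp for its shortlex reduction in the corresponding $H_i$. After eliminating any remaining $\Phi$-subwords (the finite set $\Phi$ in Proposition~\ref{prop:genset} consists of bounded-length words, so they can be located and rewritten by standard pattern matching in \slps), the resulting word $w'$ satisfies the hypotheses of Proposition~\ref{prop:genset}(ii), so $\widehat{w'}$ is a $(\lambda,c)$-quasigeodesic without vertex backtracking. Lemma~\ref{lem:ftwd} then yields an \slp $\cS_0$ with $\val(\cS_0) = \nf(w)$ in time polynomial in $|\cG|$.

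For the rebuilding stage I apply Proposition~\ref{prop:comproot} again, this time to $\cS_0$, so that every component of $\nf(w)$ has a root in it, and I then iterate over the positions $k = 0, \ldots, |\widehat{\nf(w)}|-1$ of $\widehat{\nf(w)}$. Using Corollary~\ref{cor:gamlen}, each such position is identified either as a single non-parabolic letter contributing to some $\alpha_j$ or as a complete parabolic component $v_j$; for each component $v_j$, I extract an \slp via Proposition~\ref{prop:slp_results}(iv) and compress it via Proposition~\ref{prop:abslp} to a compact \slp $\mathcal{V}_j$ of size at most $C'\log|v_j|$. The final \slp $\cS$ has start production $S \to \alpha_0 S_{\mathcal{V}_1} \alpha_1 \cdots S_{\mathcal{V}_m} \alpha_m$, and so
\[ |\cS| \le \sum_{j=0}^m |\alpha_j| + m + \sum_{j=1}^m |\mathcal{V}_j| \le |\widehat{w}| + C'\sum_{j=1}^m \log|v_j| \le |\widehat{w}|\bigl(1 + C'\log(\max_j |v_j|)\bigr). \]

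The main obstacle is bounding $\log(\max_j |v_j|)$ by $O(\log|w|)$, i.e., showing that $|\nf(w)|$ is polynomially bounded in $|w|$. This follows from the quasigeodesicity of $\cL_0$-labelled paths in $\Gamma$ noted in the closing remark of Section~\ref{sec:relhyp}, which gives $|\nf(w)| \le \lambda|w|+c$ and hence $|v_j| \le \lambda|w|+c$, yielding the desired bound $|\cS| \le \CC|\widehat{w}|\log|w|$. Both stages require only polynomially many operations on \slps of size polynomial in $|\cG|$ and iteration over at most $|\widehat{w}|$ positions, so the total running time is bounded by a polynomial in $|\widehat{w}|$ and $|\cG|$, as required.
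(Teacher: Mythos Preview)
Your second stage—rebuilding $\nf(w)$ compactly by iterating over the at most $|\widehat{w}|$ positions of $\widehat{\nf(w)}$, extracting each parabolic component, and compressing it via Proposition~\ref{prop:abslp}—is essentially what the paper does at the end of its proof, and your reliance on the quasigeodesicity remark for $\cL_0$ to bound $|\nf(w)|$ polynomially in $|w|$ is a legitimate shortcut.

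The genuine gap is in your first stage. You assert that after shortlex-reducing each component and then eliminating $\Phi$-subwords, the resulting word $w'$ has no parabolic shortenings and no $\Phi$-subwords, so that Proposition~\ref{prop:genset}\,(ii) applies. But a single pass does not guarantee this: replacing a $\Phi$-subword by a shorter $\Sigma$-geodesic can introduce new parabolic shortenings (the replacement may contain parabolic letters that merge with an adjacent component, yielding a non-geodesic component) and can create new $\Phi$-subwords straddling the boundary of the rewritten region. So you must iterate the two cleaning steps, and the only invariant you have is that each rewrite strictly decreases $|w'|_\Sigma$. Since $|w|$ can be exponential in $|\cG|$, this gives no polynomial bound on the number of iterations in terms of $|\widehat{w}|$ and $|\cG|$, and ``standard pattern matching in \slps'' does not help here because the problem is the number of rewrites, not the cost of a single one.

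The paper avoids this difficulty by never attempting to massage $w$ itself into quasigeodesic form. Instead it argues by induction on $|\widehat{w}|$: write $w = vz$ with $|\widehat{z}| = 1$, obtain an \slp for $\nf(v)$ by induction, and observe that $\widehat{\nf(v)}\,\widehat{\nf(z)}$ is automatically a $(1,2)$-quasigeodesic (being a geodesic extended by one $\widehat{\Sigma}$-letter) and, after absorbing the last component of $\nf(v)$ into $z$ when they lie in the same $H_i$, does not backtrack. This gives exactly the hypotheses of Lemma~\ref{lem:ftwd} with no preprocessing, at the cost of $|\widehat{w}|$ invocations of that lemma. Your rebuilding idea then matches the paper's final paragraph.
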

\begin{proof}
The proof is by induction on $|\hat{w}|$.
We shall not attempt to specify the constant $\CC$ in the statement precisely,
but it would be possible to do so by following the calculation at the end of the
proof. This constant will depend on $|\Sigma|$
and on the constant $\DD$ from Proposition~\ref{prop:nfwdlenbd}.
Similarly we shall not attempt to specify
the polynomial $\jj()$ in the statement, but merely observe that all steps
in the proof can be done in time polynomial in $|\hat{w}|$ and
$\log(|w|)$, and that there are $|\hat{w}|$ steps in the inductive proof.
By Proposition \ref{prop:comproot} we may assume that all components of
$\cG$ have roots.  

There is nothing to prove if $|\hat{w}|=0$. Otherwise,
we can write $\hat{w} = \hat{v}\hat{z}$ with $|\hat{z}|=1$,
where also $w = vz$, and so $w=_G vz$,
and we can assume that we have found an \slp $\cS_v$ with
$\val(\cS_v)=\nf(v)$, where the size of  $\cS_v$ is bounded as required.
We denote by $\vnf$ the word $\nf(v) = \val(\cS_v)$ and by $\wnf$ the
word $\nf(w)$. Then $\wnf =_G \vnf z$.

Suppose first that $z$ represents an element of $H_i$ for some component
subgroup $H_i$ (so $z \in \Sigma_i^*$) and that $\vnf$ ends in  
a letter from $\Sigma_i$.
Then let $y$ be the $H_i$-component of $\vnf$ that contains this letter;
so we have $\vnf=uy$ and $\widehat{\vnf} = \hat{u}\hat{y}$,
and it follows from the description of $\cL$ in Proposition \ref{prop:automatic}
that $\nf(u) = u$.
The word $u$ might not be a prefix of $w$, but
$w=_G u(yz)$, and $\hat{u}$ does not end in a letter from $\Sigma_i$.
In this situation, we replace $\vnf$ by $u$, derive $\cS_u$
with value $u=\nf(u)$ from $\cS_v$, and replace $z$ by $yz$. 

So in any case, we may now assume that, if $z \in\Sigma_i^*$ for some $i$,
then $\vnf$ does not end in a letter from $\Sigma_i$.
Let $\znf := \nf(z)$. So $\znf = \slex(z)$ when $z \in\Sigma_i^*$; otherwise
$z$ consists of a generator that does not lie in any parabolic subgroup and,
since we are assuming that the elements of $\Sigma$ represent
distinct elements of $G$, we have $|z|=1$ and $\znf = z$.
In the former case we can compute
an \slp $\cS_z$ with $\val(\cS_z) = \znf$  and $|\cS_z| \le
\max(4|\Sigma_i|\log_2(\znf)),1)$ by Proposition \ref{prop:abslp}.
So we can in any case compute an \slp $\cG$ with value $\vnf \znf$.

Now the path $\hat{p}$ in $\widehat{\Gamma}$ labelled by
$\widehat{\vnf \znf} = \widehat{\vnf}\widehat{\znf}$,
as an extension of a geodesic by an element of
$\widehat{\Sigma}$, is a $(1,2)$--quasigeodesic.
We claim that $\hat{p}$ does not backtrack. For suppose that it does.
Then the final edge of $\hat{p}$ must penetrate the same $H_{i}$-coset
as one of the other edges of $\hat{p}$, for some index $i$,
and then we must have $z \in H_i$. So some suffix of
$\widehat{\vnf}$ represents an element of $H_i$ and, since the word
$\widehat{\vnf}$ is geodesic, this suffix must have length $1$. But then
$\widehat{\vnf}$ ends in a letter in $\Sigma_i$, contrary to assumption.

We can now apply Lemma \ref{lem:ftwd} to the \slp $\cG$ with value $\vnf \znf$
and compute an \slp $\cS_0$ with value $\wnf:=\nf(\vnf z)$.
By Proposition~\ref{prop:comproot}, we can modify $\cS_0$ as necessary (in
polynomial time), to ensure that all of its components have roots.

We now explain how to modify $\cS_0$ such that its size is bounded
by $\CC|\hat{w}|\log(|w|)$ for some constant $\CC$. 
Let $\wnf[i_r:j_r)$ for  $1 \le r \le t$ be the components of $\wnf$,
and let $A_r$ be the root of $\wnf[i_r:j_r)$ in $\cS_0$.
Then by Lemma~\ref{lem:ab_polytime} we can
construct an \slp $\cS_r$ with $\val(\cS_r) = \val(A_r)$ and
$|\cS_r| \le \max(4|\Sigma|\log_2(|\val(\cS_r)|)|,1)$, 
and define $B_r := S_{\cS_r}$.
Note that $|\cS_r|$ is bounded by a multiple of $\log(|\wnf|)$,
which is itself  bounded by a multiple of $\log(|w|)$ by
Proposition~\ref{prop:nfwdlenbd}.
We now define $\cS$, whose variables consist of $S_\cS$ together with all of
the variables of each $\cS_r$ with $r \geq 1$.
For each $A \in \cS_r$, we define $\rho_\cS(A) := \rho_{\cS_r}(A)$,
and further
\[ \rho(S_{\cS}) = \wnf[j_0:i_1)B_1\wnf[j_1:i_2)B_2 \cdots \wnf[j_{t-1}:i_t)
  B_t\wnf[j_t,i_{t+1}), \]
where  $j_0:=0$ and $i_{t+1} := |\wnf|$.
Then  the \slp $\cS$ satisfies the required bound on its size.
\end{proof}

\section{Converting \tcslps and \tslps to \slps}
\label{sec:convert}

The proof of our main result falls into two parts, the first part constructing,
from the input \slp $\cG$, a \tcslp that defines the $\nf$-reduced
representative $\nf(\val(\cG))$ of its value,
and then the second part converting that \tcslp into a \slp with the same value;
this is the same strategy as was applied to prove the corresponding result
\cite[Theorem 6.7]{HLS} for hyperbolic groups, and our proofs of the component
results are based on the proofs in \cite{HLS}.

The construction of a \tcslp accepting $\nf(\val(\cG))$ is described 
in the final section of the paper, Section~\ref{sec:slextcslp}.
This section is devoted to the proof of the following conversion theorem:

\begin{theorem} \label{thm:convert}
Let $G$ be a group hyperbolic relative to a collection
of free abelian subgroups, and suppose that a generating set $\Sigma$ for $G$,
and integer $L$ are selected as in
Sections ~\ref{subsec:relhyp_param1}, \ref{subsec:relhyp_param2}.
Let $\cT$ be an $\nf$-reduced non-splitting \tcslp for $G$ over $\Sigma$,
with $\JJ_\cT \le L$.
Suppose further that each cut operator of $\cT$ is non-splitting
and specified relative to compression.
Then we can construct, in time polynomial in $|\cT|$, an $\nf$-reduced \slp
$\cS$ over $\Sigma$ with $\val(\cS) = \val(\cT)$, whose size is bounded by a 
polynomial in $|\cT|$.
\end{theorem}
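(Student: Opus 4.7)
The plan is to process the variables of $\cT$ in order of increasing height, incrementally building a single \slp $\cS$. After processing each variable $A \in V_\cT$, $\cS$ will contain a variable $S_A$ with $\val_\cS(S_A)=\val_\cT(A)$, and, by invoking Proposition~\ref{prop:comproot} as needed, every component of $\val_\cS(S_A)$ will have a root in the corresponding sub-\slp (so $\cS$ remains non-splitting). The desired output is the restriction of $\cS$ to $S_{S_\cT}$. For a terminal production $\rho_\cT(A)=a$ or a concatenation $\rho_\cT(A)=BC$, we simply set $\rho_\cS(S_A)=a$ or $S_B S_C$; since $\cT$ is $\nf$-reduced, $\val_\cT(A)$ is automatically $\nf$-reduced, and the size of $\cS$ grows by $O(1)$.

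For a non-splitting cut operator $\rho_\cT(A)=B[[k:l))$ specified relative to compression, we use Corollary~\ref{cor:gamlen} to compute indices $(i,j)$ with $\val(\cS_B)[i:j)=\val_\cT(A)$ in polynomial time, and then apply Proposition~\ref{prop:slp_results}\,(iv) to add variables to $\cS$ realizing this substring. Since $\val_\cT(A)$ consists of complete components of $\val_\cT(B)$, Remarks~\ref{rem:subclosure} and~\ref{rem:subword} guarantee that $\val_\cS(S_A)$ is $\nf$-reduced.

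The main case is the tethered operator $\rho_\cT(A)=B\langle\alpha,\beta\rangle$ with $|\alpha|,|\beta|\le L$, where $\val_\cT(A)=\nf(\alpha\,\val_\cT(B)\,\beta^{-1})$. The direct approach is to form the \slp $\cG:=\alpha\,\cS_B\,\beta^{-1}$ and apply Proposition~\ref{prop:slpshort} to obtain an \slp for $\nf(\val(\cG))=\val_\cT(A)$; the hypotheses of its proof are satisfied because $\widehat{\val_\cT(B)}$ is geodesic and prepending/appending bounded words yields a $(1,2L)$-quasigeodesic. By Proposition~\ref{prop:ftqd}, the paths labelled by $\val_\cT(A)$ and $\val_\cT(B)$ fellow-travel in $\Gamma$ at bounded distance, so $\val_\cT(A)$ is structurally close to $\val_\cT(B)$, differing only by bounded-length adjustments at the endpoints and (with help from the abelian \slp machinery of Proposition~\ref{prop:abslp}) at any long parabolic components lying astride the boundary. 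This should allow us to realize $\cS_A$ by extracting a substring of $\cS_B$ via a cut operator and framing it with short adjustment words $\alpha',\beta'$ computed by a bounded local search using the automaton of Corollary~\ref{cor:wdfsa}, rather than rebuilding the interior from scratch.

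The crux of the proof is securing the polynomial bound on $|\cS|$. A naive iteration of Proposition~\ref{prop:slpshort} at each tethered variable produces an \slp of size $\CC|\widehat{\val_\cT(A)}|\log|\val_\cT(A)|$, and since $|\widehat{\val_\cT(A)}|$ can double at each concatenation and thereby reach exponential size in $|\cT|$, iteration would cause super-polynomial blow-up. The shared-structure strategy above mitigates this: each tethered operator contributes only constantly many new productions beyond reusing the existing $\cS_B$, each cut operator contributes only polynomially many, and concatenations contribute only $O(1)$, so after $|V_\cT|$ steps $|\cS|$ remains polynomial in $|\cT|$. Once all variables of $\cT$ are processed, any residual cut operators remaining in $\cS$ are eliminated in polynomial time via Proposition~\ref{prop:cslps}\,(i), yielding the required pure \slp.
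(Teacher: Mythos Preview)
Your proposal correctly identifies the central difficulty (polynomial size control under nested tethers) and has the right geometric intuition, but the key step contains a genuine gap.

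You claim that for a tethered production $\rho_\cT(A)=B\langle\alpha,\beta\rangle$ one can realise $\val_\cT(A)$ as a cut of $\cS_B$ framed by short adjustment words $\alpha',\beta'$, so that the tether contributes only $O(1)$ new productions. This is not true. Fellow-travelling (Proposition~\ref{prop:backup2}) gives you vertices on $\gamma_{\widehat{\val_\cT(A)}}$ and $\gamma_{\widehat{\val_\cT(B)}}$ that are close in $\Gamma$, but it does \emph{not} say those vertices coincide. Consequently the central subword of $\val_\cT(A)$ is not a literal substring $\val_\cT(B)[[i:j))$; it is $\nf(\eta\cdot\val_\cT(B)[[i:j))\cdot\theta^{-1})$ for some short $\eta,\theta$, which is itself a tethered expression of exactly the kind you are trying to eliminate. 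So your reduction is circular, and the ``constantly many new productions'' claim is unjustified. The same problem recurs when a tether sits above a concatenation $\rho_\cT(B)=CD$: to split $\nf(\alpha\,\val_\cT(C)\val_\cT(D)\,\beta^{-1})$ you again need tethered pieces of $\val_\cT(C)$ and $\val_\cT(D)$.

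The paper breaks this circularity with two devices you are missing. First, it separates the problem into two passes: eliminate cut operators first (Proposition~\ref{prop:tcslp-tslp}), producing a pure \tslp, and only then eliminate tethers (Proposition~\ref{prop:tslp-slp}). Second, and crucially, in the tether-elimination pass it does not compute just one \slp per variable $A$: it precomputes a whole family $A_{\alpha,\beta}$ indexed by \emph{all} pairs $\alpha,\beta\in\Sigma^*$ with $|\alpha|,|\beta|\le L$, with $\val_\cS(A_{\alpha,\beta})=\nf(\alpha w'\beta^{-1})$ for a fixed central subword $w'$ of $\val_\cT(A)$. The endpoints $\ell_A,r_A$ of the decomposition $\val_\cT(A)=\ell_A w' r_A$ have $\widehat{\Sigma}$-length controlled by a carefully tracked \emph{tether-depth} parameter. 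With this family in hand, when one later needs $\nf(\eta\cdot\text{(middle of }\val_\cT(A))\cdot\theta^{-1})$, it is already available as $A_{\eta,\theta}$, and the recursion terminates. Your local-search-plus-framing idea is the right picture for a single tether in isolation, but without the $A_{\alpha,\beta}$ precomputation and the tether-depth bookkeeping it does not compose.
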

We choose to stress the polynomial bound on the size of $\cS$, although it
follows from the polynomial bound on time.

The proof of Theorem~\ref{thm:convert} is split into the two results that
follow, Propositions~\ref{prop:tslp-slp} and~\ref{prop:tcslp-tslp}.
The first of these computes from a given
$\nf$-reduced \tslp $\cU$ an \slp $\cS$ with the same value, and the second
computes an $\nf$-reduced \tslp $\cU$ from an $\nf$-reduced \tcslp $\cT$.
This follows the strategy of the proof of the corresponding
result in \cite{HLS}, and our proofs adapt those of
the components of that proof.

There are complications in our proofs that do not arise in \cite{HLS}, 
resulting  partly from the fact that we are
using Proposition \ref{prop:backup2} rather than \cite[Lemma 4.4]{HLS}, and
partly because many of the upper bounds on lengths of words are bounds on
their lengths as words over $\widehat{\Sigma}$ rather than over $\Sigma$.

\begin{proposition}\label{prop:tslp-slp}
Let $G,\Sigma$ and $L$ be as in Theorem~\ref{thm:convert}. Then,
given an $\nf$-reduced non-splitting \tslp $\cU$ for $G$ over $\Sigma$ with
$\JJ_\cU \le L$,
we can construct, in time polynomial in $|\cU|$, an \slp
$\cS$ over $\Sigma$ with $\val(\cS) = \val(\cU)$,
whose size is bounded by a polynomial function of $|\cU|$.
\end{proposition}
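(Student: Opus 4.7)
The plan is to process the variables of $\cU$ in order of increasing height, constructing for each variable $A$ an \slp $\cS_A$ with $\val(\cS_A) = \val_\cU(A)$, so that at each step the size of $\cS$ grows by only a polynomial-in-$|\cU|$ amount. Variables $A$ with $\rho_\cU(A) \in \Sigma \cup \{\emptyword\}$ are handled trivially. For $\rho_\cU(A) = BC$ I take $\cS_A := \cS_B \cS_C$; since $\cU$ is $\nf$-reduced, $\val_\cU(B)\val_\cU(C)$ already lies in $\cL_0$, so no renormalisation is required.

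The substantial case is $\rho_\cU(A) = B\langle\alpha,\beta\rangle$, so $\val_\cU(A) = \nf(\alpha\,\val_\cU(B)\,\beta^{-1})$ with $|\alpha|,|\beta| \le L$. The key idea is to avoid rebuilding $\val_\cU(A)$ from scratch (which, via Proposition~\ref{prop:slpshort}, would cost an amount depending on $|\widehat{\val_\cU(A)}|$, potentially exponential in the height of $A$), and instead to re-use almost all of $\cS_B$ through a cut extraction. To that end I apply Proposition~\ref{prop:backup2} to the $\widehat{\Gamma}$-quadrilateral with sides $\alpha$, $\widehat{\val_\cU(B)}$, $\beta$, $\widehat{\val_\cU(A)}$, whose last two sides are geodesic because $\val_\cU(A), \val_\cU(B) \in \cL_0$. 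This yields a constant $L$ and a function $\ff$ such that vertices of $\widehat{\val_\cU(A)}$ at $\widehat{\Sigma}$-distance at least $\ff(L)$ from each endpoint have a $\Gamma$-close partner on $\widehat{\val_\cU(B)}$, and vice versa. Invoking the non-splitting hypothesis on $\cU$ together with Remark~\ref{rem:subword}, I choose compression-indexed cut positions $k_A<l_A$ in $\val_\cU(A)$ and $k_B<l_B$ in $\val_\cU(B)$ that fall at component boundaries and lie within bounded $\widehat{\Sigma}$-distance of the corresponding endpoints. I then construct $\cS_A$ as a three-part concatenation: a prefix \slp for $\val_\cU(A)[[0:k_A))$, an \slp for the middle, and a suffix \slp for $\val_\cU(A)[[l_A:))$. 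The prefix and suffix each have bounded $\widehat{\Sigma}$-length and thus only boundedly many components, each of which can be compactly encoded via Proposition~\ref{prop:abslp}. The middle \slp is obtained by extracting $\val_\cU(B)[[k_B:l_B))$ from $\cS_B$ using Corollary~\ref{cor:gamlen} together with Proposition~\ref{prop:slp_results}(iv), and, if necessary, post-adjusting at finitely many boundary components by short $\Sigma$-words so as to match $\val_\cU(A)[[k_A:l_A))$ exactly.

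The principal obstacle is the matching argument for the middles: I must show that $\val_\cU(A)[[k_A:l_A))$ and $\val_\cU(B)[[k_B:l_B))$ agree as words over $\Sigma$ outside of at most a bounded number of boundary components. This requires combining the $\widehat{\Sigma}$-metric fellow-travelling of Proposition~\ref{prop:backup2} with a careful component-by-component analysis, translating between $\Sigma$- and $\widehat{\Sigma}$-positions using Corollary~\ref{cor:gamlen} and the component-root availability from Proposition~\ref{prop:comproot}: the inter-component letters of the two middles must coincide by the $\cL_0$-uniqueness in Proposition~\ref{prop:automatic} together with the fellow-travel correspondence, while any discrepant parabolic components lie only near the boundary and can be absorbed into the prefix or suffix construction. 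With this matching in hand, each tethered step adds only a polynomial-in-$|\cU|$ number of variables and productions to $\cS$, and summing over the at most $|\cU|$ variables of $\cU$ yields the required polynomial bound on $|\cS|$ and on the total construction time.
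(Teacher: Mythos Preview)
Your proposal contains a genuine gap at precisely the point you identify as the ``principal obstacle'': the matching of the middles. You claim that $\val_\cU(A)[[k_A:l_A))$ and $\val_\cU(B)[[k_B:l_B))$ agree as words over $\Sigma$ away from a bounded number of boundary components, and you justify this by invoking $\cL_0$-uniqueness together with the fellow-travel correspondence. But $\cL_0$-uniqueness (Proposition~\ref{prop:automatic}) says only that each group element has a unique $\cL_0$-representative; it says nothing about two \emph{different} group elements having $\cL_0$-representatives that share letters. The words $\val_\cU(A)$ and $\val_\cU(B)$ represent different group elements, related by $\val_\cU(A)=_G\alpha\,\val_\cU(B)\,\beta^{-1}$, and although Proposition~\ref{prop:ftqd} guarantees they asynchronously fellow-travel in $\Gamma$, fellow-travelling at bounded distance in no way forces equality of the underlying $\Sigma$-words. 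Indeed, both the inter-component letters and the shortlex words labelling corresponding parabolic components can differ throughout the length of the word, not merely near the ends. So the step ``the inter-component letters of the two middles must coincide'' simply fails.

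The paper's proof confronts exactly this difficulty, and its solution is the reason for the auxiliary family $\{A_{\alpha,\beta}\}$. Rather than hoping to reuse the middle of $\cS_B$ directly, the paper precomputes, for every ``large'' variable $B$ and every pair $\alpha,\beta$ with $|\alpha|,|\beta|\le L$, a variable $B_{\alpha,\beta}$ whose value is $\nf(\alpha\cdot(\text{middle of }\val_\cU(B))\cdot\beta^{-1})$. Then when $\rho_\cU(A)=B\langle\sigma,\tau\rangle$ is processed, Proposition~\ref{prop:backup2} is used not to match words but only to locate short connecting words $\chi,\psi$, after which one of the precomputed $B_{\chi,\psi}$ provides the middle of $\val_\cU(A)$ exactly. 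The $\ell_A,r_A$ bookkeeping (with lengths controlled via tether-depth and height) ensures that only polynomially much new material is added at each step. Your approach would need either this precomputation or an appeal to Lemma~\ref{lem:ftwd} at every tether step; but the latter costs time depending on $|\widehat{\val_\cU(B)}|$, which can be exponential in $|\cU|$, so it does not give a polynomial bound either.
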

\begin{proof}
Let $\ff$ be the linear function in the conclusion of
Proposition \ref{prop:backup2}. Modifying $\ff$ as necessary, we can assume
that $\ff$ is an increasing function with $\ff(n) \ge n$ for all $n$.

The result is trivial if $\val(\cU) = \epsilon$, so suppose not.
We defined the height of a variable in $\cU$ in
Section~\ref{subsec:extend_slps}.
We also pointed out that the right hand sides $\rho(A)$
that lie in $(V_{\cU} \cup \Sigma)^*$ may be assumed to have the form
$a \in \Sigma$ or $BC$ with $B,C \in V_{\cU}$, and we shall assume that here.

We define the {\em tether-height} $\tHt(A)$
and {\em tether-depth} $\tD(A)$ of a variable $A$, via
\begin{eqnarray*}
\tHt(A)&:=&\left \{ 
\begin{array}{l} 0 \\ \max\{\tHt(B),\tHt(C)\}\\ \tHt(B)+1 \end{array} \right .
					  \, \hbox{\rm if} \,
	  \begin{array}{l} \rho_\cU(A)\in \Sigma^*\\ \rho_\cU(A)=BC\\
	  \rho_\cU(A) = B \langle \alpha,\beta \rangle, \end{array}  \\
\tD(A) &:=& \tHt(S) - \tHt(A) +1,\end{eqnarray*}
where $S=S_\cU$.
By removing unused variables, we may assume that $S$ has maximal height and
maximal tether-height, so that every variable has positive tether-depth.

In the proof, it is convenient to assume that $A$, $B$ and $C$ have the
same tether-depths in all productions of type $A \to BC$. To achieve this,
we can increase the tether-depth of any variable $A$, if necessary, by
introducing a new redundant variable $X$ together with a production
$X \to A\langle  \emptyword,\emptyword \rangle$. This will not affect the
maximality of the height and tether-height of $S$.

We process the variables of $\cU$ in order of increasing height.
Since the number of variables of $\cU$ is certainly bounded by $|\cU|$,
in order to get the bounds we need on time and space, it will
be sufficient to bound the time spent processing each variable $A$, 
together with the length of right-hand sides added during each such step.

As we process each variable $A$ of $\cU$ we shall
either define a copy of $A$ within $\cS$ or a
set of bounded size of new variables for $\cS$
that are associated with $A$, and for each new variable of $\cS$ that
we introduce we shall define its right-hand side in $\cS$.

Our construction will ensure the following,
for any variable $A$ of $\cU$, where $w:= \val_\cU(A)$.
(Recall that, since $\cU$ is $\nf$-reduced, $w=\nf(w)$.)
\begin{mylist}
\item[(i)]
If $|\hat{w}| \le (8\,\tD(A) + 1)\ff(2L)$, then
$\cS$ contains a copy of $A$, and the \slp $\cS_A$ 
is computed from $\cU_A$ in time bounded by $\jj(|\hat{w}|,|\cU|)$, and 
has size at most $\CC|\hat{w}|\log(|w|)$,
where $\CC$ and $\jj$ are the constant and polynomial
of Proposition ~\ref{prop:slpshort}.
Note that Proposition \ref{prop:tcslplenbd} implies that the size of
$\cS_A$ is bounded by a polynomial function of the input size $|\cU|$ in this
case.
\item[(ii)] If $|\hat{w}| > (8\,\tD(A) + 1)\ff(2L)$, then
$w$ decomposes as a concatenation $\ell_Aw'r_A$ with
$|\widehat{w'}| \ge \ff(2L)$, and  \[4\ff(2L)\tD(A) \le
|\widehat{\ell_A}|,|\widehat{r_A}| \le (4\tD(A) + \Ht(A))\ff(2L).\]
As $A$ is processed, new variables $A_\ell$ and $A_r$
are adjoined to $\cS$
as the roots of \slps with values $\ell_A,r_A$
(with size at most $\CC|\widehat{\ell_A}|\log(|\ell_A|)$ and
$\CC|\widehat{r_A}|\log(|r_A|)$),
as well as variables $A_{\alpha,\beta}$ as the roots of \slps with values
$\nf(\alpha w' \beta^{-1})$,
for each $\alpha,\beta \in  \Sigma^*$ with $|\alpha|,|\beta| \le L$.

Each of the subwords $\ell_A$, $w'$ and $r_A$ of $w$ is a union of complete
components of $w$ and subwords in $(\Sigma \setminus \cH)^*$,
and is in normal form.
\end{mylist}

Suppose that, while processing the variables of $\cU$ in increasing order
of height, we have reached the variable $A$ of $\cU$.
Let $w:= \val_\cU(A)$.

We consider three different possible cases.

{\bf Case 1.} $\rho_{\cU}(A) \in \Sigma$. We define
a variable $A$ within $\cS$, and define $\rho_\cS(A) := a$.

{\bf Case 2.} $\rho_\cU(A)=BC$ for variables $B,C$ of $\cU$.
Recall that we are assuming that
$\tD(A),\tD(B),\tD(C)$ are all equal; let $\td$ be their common value.
Let $u := \val_\cU(B)$, $v:=\val_\cU(C)$, so that $w=uv$.
Since we are assuming that $\cU$ is non-splitting, either $uv \in \Sigma_i^*$
for some $i$, in which case $|\hat{u}|,|\hat{v}|,|\hat{w}|\le 1$,
or $\hat{w} = \hat{u}\hat{v}$.

Suppose first (Case 2.1) that
$|\hat{u}|,|\hat{v}| > (8\td + 1) \ff(2L)$
(so we do not have $uv \in \Sigma_i^*$).
Then, when we processed the variables $B,C$,
we computed \slps with values $\ell_B, r_B, \ell_C, r_C$ such that:
\begin{mylist}
\item[(i)]  $4\td \ff(2L) \le |\widehat{\ell_B}|$, $|\widehat{r_B}| \le
   (4\td + \Ht(B))\ff(2L)$,
\item[(ii)]  $4\td \ff(2L) \le |\widehat{\ell_C}|$, $|\widehat{r_C}| \le
(4\td + \Ht(C))\ff(2L)$,
\item[(iii)] $u = \ell_B u' r_B$ and $v = \ell_C v' r_C$ where
$|\widehat{u'}|, |\widehat{v'}| \ge \ff(2L)$.
\end{mylist}
Moreover, for all words $\eta,\theta \in \Sigma^*$ with
$|\eta|,|\theta| \le L$, we defined variables
$B_{\eta,\theta}$ and $C_{\eta,\theta}$ in $\cS$ whose values in $\cS$ are
$\nf(\eta u' \theta^{-1})$ and $\nf(\eta v' \theta^{-1})$, respectively.

We now define $\ell_A := \ell_B$, $r_A := r_C$, and $w' := u' r_B \ell_C v'$;
so that $w = \ell_A w' r_A$.
Note that, since $\tD(A) = \tD(B) = \tD(C) = \td$ and
$\Ht(B),\Ht(C) \le \Ht(A)$, we have (from (i) and (ii)) the length constraints
$4\ff(2L) \tD(A) \le |\widehat{\ell_A}|$, $|\widehat{r_A}| \le
(4\,\tD(A) + \Ht(A))\ff(2L)$. And from the lower bounds on $|\widehat{u'}|,
|\widehat{v'}|$ in (iii) we can deduce the required bound
$|\widehat{w'}|  \geq \ff(2L)$.

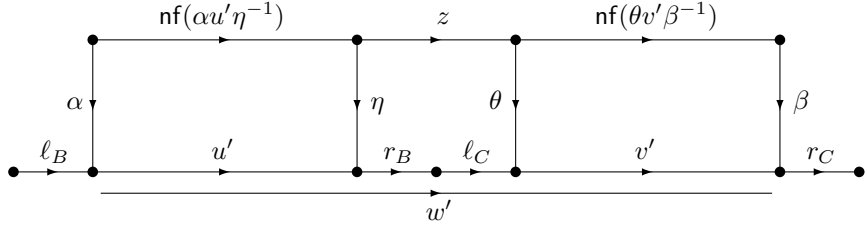
\begin{figure}
\begin{picture}(340,80)(0,-12)
\put(43,-1){\vector(1,0){129}}
\put(170,-1){\line(1,0){127}}
\put(166,-11){$w'$}

\put(10,7){\circle*{4}}
\put(40,7){\circle*{4}}
\put(140,7){\circle*{4}}
\put(170,7){\circle*{4}}
\put(200,7){\circle*{4}}
\put(300,7){\circle*{4}}
\put(330,7){\circle*{4}}

\put(10,7){\vector(1,0){17}}
\put(25,7){\line(1,0){15}}
\put(40,7){\vector(1,0){52}}
\put(90,7){\line(1,0){50}}
\put(140,7){\vector(1,0){17}}
\put(155,7){\line(1,0){15}}
\put(170,7){\vector(1,0){17}}
\put(185,7){\line(1,0){15}}
\put(200,7){\vector(1,0){52}}
\put(250,7){\line(1,0){50}}
\put(300,7){\vector(1,0){17}}
\put(315,7){\line(1,0){15}}
\put(20,12){$\ell_B$}
\put(85,12){$u'$}
\put(150,12){$r_B$}
\put(180,12){$\ell_C$}
\put(245,12){$v'$}
\put(310,12){$r_C$}

\put(40,57){\circle*{4}}
\put(140,57){\circle*{4}}
\put(200,57){\circle*{4}}
\put(300,57){\circle*{4}}

\put(40,57){\vector(1,0){52}}
\put(90,57){\line(1,0){50}}
\put(140,57){\vector(1,0){32}}
\put(170,57){\line(1,0){30}}
\put(200,57){\vector(1,0){52}}
\put(250,57){\line(1,0){50}}
\put(65,63){$\nf(\alpha u' \eta^{-1})$}
\put(170,63){$z$}
\put(230,63){$\nf(\theta v' \beta^{-1})$}

\put(40,57){\vector(0,-1){27}}
\put(40,32){\line(0,-1){25}}
\put(140,57){\vector(0,-1){27}}
\put(140,32){\line(0,-1){25}}
\put(200,57){\vector(0,-1){27}}
\put(200,32){\line(0,-1){25}}
\put(300,57){\vector(0,-1){27}}
\put(300,32){\line(0,-1){25}}
\put(30,31){$\alpha$}
\put(145,31){$\eta$}
\put(190,31){$\theta$}
\put(305,31){$\beta$}
\end{picture}
\caption{Case 2.1}
\label{fig:case2.1}
\end{figure}

We already have \slps with values $\ell_A$ and $r_A$.
It remains to define the right-hand sides for the variables $A_{\alpha,\beta}$
for all $\alpha,\beta \in \Sigma^*$ with $|\alpha|,|\beta| \le L$.  

For each such $\alpha,\beta$, and
for all $\eta,\theta \in \Sigma^*$ with $|\eta|, |\theta| \le L$, we compute
(using Proposition \ref{prop:slpshort}) an \slp $\cZ$ with value
$z := \nf( \eta r_B \ell_C \theta^{-1} )$;
since $\hat{z}$ has length bounded by a constant multiple of $|V_\cU|$,
this computation takes time bounded by a polynomial in $|\cU|$,
and $|\cZ|$ is similarly bounded.
Then we check (in polynomial time)
using Proposition~\ref{prop:slp_results}(v) whether the word
\begin{eqnarray*}
\val_{\cS}(B_{\alpha,\eta}) \, z \, \val_{\cS}(C_{\theta,\beta}) &=&
   \nf(\alpha u' \eta^{-1}) \, \nf(\eta r_B \ell_C \theta^{-1}) \, 
\nf(\theta v' \beta^{-1})
\end{eqnarray*}
is $\nf$-reduced, in which case it is the word
$\nf(\alpha u' r_B \ell_C v' \beta^{-1})= \nf(\alpha w'\beta^{-1})$;
see Figure \ref{fig:case2.1}.

We claim that there must be at least one pair $\eta,\theta$ for which this
holds. To see this, we apply Proposition \ref{prop:backup2} twice. First
we apply it to the quadrilateral with sides labelled $\alpha$, $w'$, $\beta$,
$\nf(\alpha w' \beta^{-1})$, using $|\widehat{u'}|,
|\widehat{v'}| \ge \ff(2L) \ge \ff(L)$, to define $\eta$, and then to the
quadrilateral with sides labelled $\eta$, $r_B\ell_Cv'$, $\beta$,
$\nf (\eta r_B\ell_Cv'\beta^{-1})$ using $|\widehat{r_B\ell_C}|,
|\widehat{v'}| \ge \ff(L)$ to define $\theta$. 
Proposition~\ref{prop:backup2} ensures that $|\theta|, |\eta|\leq L'$, and since
in Section~\ref{subsec:relhyp_param2} we chose $L\geq L'$, we certainly
have $|\theta|,|\eta| \leq L$.
We then include the variables of the \slp $\cZ$ within $\cS$ and 
define $\rho_\cS(A_{\alpha,\beta}) := B_{\alpha,\eta} \, S_\cZ \,
C_{\theta,\beta}$.

Suppose next (Case 2.2) that
$|\hat{u}| > (8\td + 1)\ff(2L)$ and $|\hat{v}| \le (8\td + 1)\ff(2L)$
(so again we do not have $uv \in \Sigma_i^*$).
(Case 2.3 where
$|\hat{u}| \le (8\td + 1)\ff(2L)$ and $|\hat{v}| > (8\td + 1)\ff(2L)$
is similar, and we shall omit the details.)
Then we have already computed an \slp with value $v$, and
\slps for words $\ell_B$ and $r_B$ where:
\begin{mylist}
\item[(i)]  $4\td \ff(2L) \le |\widehat{\ell_B}|, |\widehat{r_B}| \le
  (4\td + \Ht(B))\ff(2L)$,
\item[(ii)] $u = \ell_B u' r_B$ for a word $u'$ with
$|\widehat{u'}| \ge \ff(2L)$.
\end{mylist}
Moreover, for all words $\eta,\theta \in \Sigma^*$ with
$|\eta|,|\theta| \le L$, we have defined variables
$B_{\eta,\theta}$ with value $\nf(\eta u' \theta^{-1})$.  

If $|\hat{v}| \leq \ff(2L)$, then we set $\ell_A := \ell_B$, $w':=u'$, and
$r_A := r_B v$, 
so that $A_\ell=B_\ell$ and $A_r$ is the root of an \slp with value $r_Bv$,
and we define $\rho_\cS(A_{\alpha,\beta}) := B_{\alpha,\beta}$ for all
$\alpha,\beta \in \Sigma^*$ with $|\alpha|,|\beta| \le L$; so
the step takes polynomial time and adds right-hand sides of bounded total
length.  In this case, we have
$4\td \ff(2L) \le |\widehat{\ell_A}| \le (4\td + \Ht(B))\ff(2L) \le
(4\td + \Ht(A))\ff(2L)$ and
$4\td \ff(2L) \le |\widehat{r_A}| \le (4\td + \Ht(B) + 1)\ff(2L) \le
(4\td + \Ht(A))\ff(2L)$, as required.

So now assume that $|\hat{v}| > \ff(2L)$. Again, we set $\ell_A := \ell_B$.
Since we are assuming that $\cU$ is non-splitting and $\rho(A) = BC$,
an occurrence of $\val_\cU(B)$ as a prefix of an occurrence of $\val_\cU(A)$
cannot split a component, and so we have
$\widehat{r_Bv}= \widehat{r_B}\hat{v}$. 
Since $|\widehat{r_B v}| \ge (4\td +1)\ff(2L)$ we can define $r_A$ as the
suffix of $r_B v$ with $|\widehat{r_A}| = 4\td \ff(2L)$;
that is, $r_B v = y r_A$ for some word $y$.

Since $\widehat{r_Bv}=\widehat{r_B}\hat{v}$ and $|\widehat{r_B}| \ge
4\td \ff(2L) = |\widehat{r_A}|$, we have
$|\hat{y}| = |\widehat{r_B}|+|\hat{v}|-|\widehat{r_A}| \ge
|\hat{v}| > \ff(2L)$. 
(Recall that $r_B$ and $v$ are in normal form, and hence $\widehat{r_B}$ and $\hat{v}$ are geodesic.)
Then $w = \ell_Aw'r_A$ with $w' = u'y$.
This satisfies the required bounds
on the lengths of $\widehat{\ell_A}$, $\widehat{r_A}$ and $\widehat{w'}$.
We can use Proposition \ref{prop:comproot} and Corollary \ref{cor:gamlen}
to define \slps with values $y$ and $r_A$,
in time polynomial in $|\cU|$; we set $A_r$ to be the root of the second of these. 

\begin{figure}
\begin{picture}(240,80)(-10,-12)
\put(173,-1){\vector(1,0){29}}
\put(200,-1){\line(1,0){27}}
\put(196,-11){$v$}

\put(10,7){\circle*{4}}
\put(40,7){\circle*{4}}
\put(140,7){\circle*{4}}
\put(170,7){\circle*{4}}
\put(200,7){\circle*{4}}
\put(230,7){\circle*{4}}

\put(10,7){\vector(1,0){17}}
\put(25,7){\line(1,0){15}}
\put(40,7){\vector(1,0){52}}
\put(90,7){\line(1,0){50}}
\put(140,7){\vector(1,0){17}}
\put(155,7){\line(1,0){15}}
\put(170,7){\vector(1,0){17}}
\put(185,7){\line(1,0){15}}
\put(200,7){\vector(1,0){17}}
\put(215,7){\line(1,0){15}}
\put(20,-3){$\ell_B$}
\put(85,-3){$u'$}
\put(150,-3){$r_B$}

\put(143,15){\vector(1,0){29}}
\put(170,15){\line(1,0){27}}
\put(166,20){$y$}

\put(40,57){\circle*{4}}
\put(140,57){\circle*{4}}
\put(200,57){\circle*{4}}

\put(40,57){\vector(1,0){52}}
\put(90,57){\line(1,0){50}}
\put(140,57){\vector(1,0){32}}
\put(170,57){\line(1,0){30}}
\put(65,63){$\nf(\alpha u' \eta^{-1})$}
\put(165,63){$z$}

\put(40,57){\vector(0,-1){27}}
\put(40,32){\line(0,-1){25}}
\put(140,57){\vector(0,-1){27}}
\put(140,32){\line(0,-1){25}}
\put(200,57){\vector(0,-1){27}}
\put(200,32){\line(0,-1){25}}
\put(30,31){$\alpha$}
\put(145,31){$\eta$}
\put(205,31){$\beta$}
\end{picture}
\caption{Case 2.2}
\label{fig:case2.2}
\end{figure}
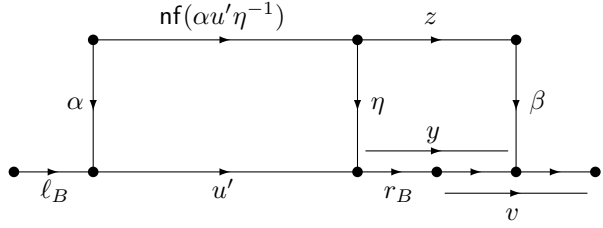

It remains to define the right-hand sides for the variables $A_{\alpha,\beta}$
for all words $\alpha,\beta \in \Sigma^*$ with $|\alpha|, |\beta| \le L$.
Now, for all $\eta \in \Sigma^*$ with $|\eta| \le L$, we compute
(using Proposition \ref{prop:slpshort}) an \slp $\cZ$ with value
$z := \nf( \eta y \beta^{-1})$; 
this is done in polynomial time and $\cZ$ has polynomially bounded size.
Then we check whether the word
\[ \val_{\cS}(B_{\alpha,\eta}) \, z =
\nf(\alpha u' \eta^{-1}) \, \nf(\eta y \beta^{-1})
\]
is $\nf$-reduced, in which case it is
$\nf(\alpha u' y \beta^{-1}) = \nf(\alpha w'\beta^{-1})$;
see Figure \ref{fig:case2.2}.
Since $|\hat{u}|>(8\td+1)\ff(2L)>\ff(L)$ and 
$|\hat{y}| > \ff(2L) \ge \ff(L)$, Proposition~\ref{prop:backup2} implies
that there must be at least one such $\eta$ 
with $|\eta| \leq L'\leq L$. 
We include the variables of the
\slp $\cZ$ within $\cS$ and define
$\rho_\cS(A_{\alpha,\beta}) := B_{\alpha,\eta} \, S_\cZ$.

Finally (Case 2.4), suppose that
$|\hat{u}|, |\hat{v}| \le (8\td + 1)\ff(2L)$
(and hence $|\hat{w}| \le 2(8\td+1)\ff(2L)$).
Note that we could now have $w \in \Sigma_i^*$ for some $i$.
In this case, we have already computed \slps with values $u$ and $v$
when we processed $B,C$.
If $|\hat{w}| \le (8\td+ 1)\ff(2L)$ (which is certainly true when
$w \in \Sigma_i^*$) then we can define an \slp
for $w$ as the concatenation of those for $u$ and $v$
and attach it to $\cS$ with $A$ as its root.
We can then use Proposition \ref{prop:slpshort} to ensure that this
\slp satisfies the required bound on its size.

Otherwise we factorise $w$ as $w = \ell_A w' r_A$ with
$|\widehat{\ell_A}| = |\widehat{r_A}| = 4\td \ff(2L) $, and thus
$(8\td+ 2)\ff(2L) \ge |\widehat{w'}| \ge \ff(2L)$.
We use Proposition \ref{prop:comproot} and Corollary \ref{cor:gamlen}
to define \slps with values $\ell_A$ and $r_A$, and attach those to $\cS$. 
For $\alpha,\beta \in \Sigma^*$ with $|\alpha|,|\beta| \le L$ we compute
(using Proposition \ref{prop:slpshort}), an \slp $\cZ$ with value
$\nf(\alpha w' \beta^{-1})$;
Then we attach $\cZ$ within $\cS$ with $A_{\alpha,\beta}$ as its root,
so that $\val_\cS(A_{\alpha,\beta}) = \nf(\alpha w' \beta^{-1})$.
This computation takes time bounded by a polynomial function of its
input size which, as a result of our size restriction on the \slps
$\cS_B$ and $\cS_C$ is in turn bounded by a polynomial function of $|\cU|$;
so the same applies to the size of $\cZ$.

{\bf Case 3.} $\rho_\cU(A) = B\langle \sigma,\tau \rangle$ for a variable
$B$ of $\cU$ and words $\sigma,\tau \in \Sigma^*$ with $|\sigma|,|\tau| \le L$.
Let $u := \val_{\cU}(B)$ and $w := \val_{\cU}(A) = \nf(\sigma u \tau^{-1})$.
Let $\td := \tD(B)$. Then $\tD(A) = \td-1 \ge 1$, and so $\td \geq 2$.

If $|\hat{u}| \le (8\td +1) \ff(2L)$ (Case 3.1) ,
then we have already computed an \slp with value $u$. 
In that case, we proceed much as we did in Case~2.4, as follows.
Using  Proposition \ref{prop:slpshort},
we construct an \slp for $\sigma u \tau^{-1}$ as the concatenation of
three \slps (the word $\sigma$, our \slp for $u$, and the word $\tau$),
and then construct from this an \slp $\cZ$ for $w$.
This computation takes time bounded by $\jj((8\td+1)\ff(2L)+2L)$ and $\cZ$
has size bounded by
$\CC|\hat{w}|\log(|w|)$.
If $|\hat{w}| \le (8 \td + 1)\ff(2L)$,
then we just attach $\cZ$ to $\cS$ with $A$ as its root.

Otherwise, we factorise $w$ as $w=\ell_Aw'r_A$ and, just as in the second part
of Case~2.4, we define \slps with values $\ell_A$ and $r_A$, and attach those
to $\cS$ and, for each $\alpha,\beta \in \Sigma^*$ with
$|\alpha|,|\beta| \leq L$, we compute an \slp with value
$\nf(\alpha w'\beta^{-1})$, and attach it to $\cS$ with the new variable
$A_{\alpha,\beta}$ as its root, so that
$\val_{\cS}(A_{\alpha,\beta}) = \nf(\alpha w' \beta^{-1})$.

Otherwise (Case 3.2), we have  $|\hat{u}| > (8\td + 1) \ff(2L)$.
We have computed \slps for words $\ell_B, r_B$ such that
$4\td \ff(2L) \le |\widehat{\ell_B}|, |\widehat{r_B}| \le
(4\td + \Ht(B))\ff(2L)$
and $u = \ell_B u' r_B$ for a word $u'$ with $|\widehat{u'}| \ge \ff(2L)$.
Moreover, for all words $\eta,\theta \in \Sigma^*$ with
$|\eta|,|\theta| \le L$, we have defined variables $B_{\eta,\theta}$
with value $\nf(\eta u' \theta^{-1})$.

We check for all $\eta,\theta \in \Sigma^*$ with $|\eta|,|\theta| \le L$
whether
\[
\nf(\sigma \ell_B \eta^{-1}) \val_{\cS}(B_{\eta,\theta})
\nf(\theta r_B \tau^{-1}) = \nf(\sigma \ell_B \eta^{-1})
  \nf(\eta u' \theta^{-1}) \nf(\theta r_B \tau^{-1})
\]
is $\nf$-reduced, in which case it is
$\nf(\sigma \ell_B u' r_B \tau^{-1})  = \nf(\sigma u \tau^{-1}) =w$.
Since $|\widehat{\ell_B}|,|\widehat{r_B}|,|\widehat{u'}| \ge \ff(L)$,
we can show that such words $\eta,\theta$ exist by two applications of
Proposition~\ref{prop:backup2} as we did in Case 2.1.

Let $s := \nf(\sigma \ell_B \eta^{-1})$ and
$t := \nf(\theta r_B \tau^{-1})$,
so that $w=s\,\nf(\eta u'\theta^{-1}) t$.  
We deduce from the triangle inequality
that $|\widehat{\ell_B}| \leq |\widehat{\sigma}|+|\hat{s}|+|\hat{\eta}| \leq |\hat{s}|+2L$
and similarly
that $|\widehat{r_B}| \leq |\hat{t}|+2L$,
and hence
we have $|\hat{s}|,\, |\hat{t}| \ge 4\td \ff(2L) -2L \ge
(4 \td-1) \ff(2L)\geq 7\ff(2L)$.  (Recall that $\td\geq 2$.)
Hence we can factorise these words as $s = vx$ and $t = yz$ with
$|\hat{v}| = |\hat{z}| = 4(\td-1) \ff(2L) =
4\tD(A) \ff(2L) \ge 4\ff(2L)$,
and $|\hat{x}|, \,|\hat{y}| \ge 3\ff(2L)$.

We define $\ell_A := v$ and $r_A := z$;
these words satisfy the required bounds on their lengths.  Note that
$\val_{\cU}(A) = \nf(\sigma u \tau^{-1}) = \ell_A w' r_A$ with
$w' := x \, \nf(\eta u' \theta^{-1}) y$,
and $|\widehat{w'}| \ge 6\ff(2L) \ge \ff(2L)$.
As in  earlier cases we can apply Proposition \ref{prop:comproot} and
Corollary \ref{cor:gamlen} to compute \slps with values $v=\ell_A$,
$x$, $y$, $z=r_A$, and $w'$.

\begin{figure}
\begin{picture}(320,120)(-20,0)
\put(10,7){\vector(1,0){42}}
\put(50,7){\line(1,0){40}}
\put(230,7){\vector(1,0){42}}
\put(270,7){\line(1,0){40}}
\put(45,-3){$\ell_B$}
\put(265,-3){$r_B$}

\put(10,15){\circle*{4}}
\put(50,15){\circle*{4}}
\put(90,15){\circle*{4}}
\put(230,15){\circle*{4}}
\put(270,15){\circle*{4}}
\put(310,15){\circle*{4}}

\put(10,15){\vector(1,0){22}}
\put(30,15){\line(1,0){20}}
\put(50,15){\vector(1,0){22}}
\put(70,15){\line(1,0){20}}
\put(230,15){\vector(1,0){22}}
\put(250,15){\line(1,0){20}}
\put(270,15){\vector(1,0){22}}
\put(290,15){\line(1,0){20}}
\put(90,15){\vector(1,0){72}}
\put(160,15){\line(1,0){70}}
\put(23,20){$v'$}
\put(65,20){$x'$}
\put(243,20){$y'$}
\put(285,20){$z'$}
\put(155,20){$u'$}

\put(10,65){\circle*{4}}
\put(50,65){\circle*{4}}
\put(90,65){\circle*{4}}
\put(230,65){\circle*{4}}
\put(270,65){\circle*{4}}
\put(310,65){\circle*{4}}

\put(10,65){\vector(1,0){22}}
\put(30,65){\line(1,0){20}}
\put(50,65){\vector(1,0){22}}
\put(70,65){\line(1,0){20}}
\put(230,65){\vector(1,0){22}}
\put(250,65){\line(1,0){20}}
\put(270,65){\vector(1,0){22}}
\put(290,65){\line(1,0){20}}
\put(90,65){\vector(1,0){72}}
\put(160,65){\line(1,0){70}}
\put(23,55){$v$}
\put(65,55){$x$}
\put(243,55){$y$}
\put(285,55){$z$}

\put(53,73){\vector(1,0){109}}
\put(160,73){\line(1,0){107}}
\put(155,78){$w'$}

\put(50,115){\circle*{4}}
\put(90,115){\circle*{4}}
\put(230,115){\circle*{4}}
\put(270,115){\circle*{4}}

\put(50,115){\vector(1,0){22}}
\put(70,115){\line(1,0){20}}
\put(230,115){\vector(1,0){22}}
\put(250,115){\line(1,0){20}}
\put(90,115){\vector(1,0){72}}
\put(160,115){\line(1,0){70}}

\put(10,65){\vector(0,-1){27}}
\put(10,40){\line(0,-1){25}}
\put(0,37){$\sigma$}
\put(50,65){\vector(0,-1){27}}
\put(50,40){\line(0,-1){25}}
\put(40,37){$\mu$}
\put(90,65){\vector(0,-1){27}}
\put(90,40){\line(0,-1){25}}
\put(80,37){$\eta$}

\put(230,65){\vector(0,-1){27}}
\put(230,40){\line(0,-1){25}}
\put(235,37){$\theta$}
\put(270,65){\vector(0,-1){27}}
\put(270,40){\line(0,-1){25}}
\put(275,37){$\nu$}
\put(310,65){\vector(0,-1){27}}
\put(310,40){\line(0,-1){25}}
\put(315,37){$\tau$}

\put(50,115){\vector(0,-1){27}}
\put(50,90){\line(0,-1){25}}
\put(40,87){$\alpha$}
\put(270,115){\vector(0,-1){27}}
\put(270,90){\line(0,-1){25}}
\put(275,87){$\beta$}

\qbezier(90,115)(120,65)(90,15)
\put(104.5,55){\vector(0,-1){0}}
\put(108,53){$\chi$}
\qbezier(230,115)(200,65)(230,15)
\put(215.3,55){\vector(0,-1){0}}
\put(204,53){$\psi$}

\end{picture}
\caption{Case 3}
\label{fig:case3}
\end{figure}

It remains to define the right-hand sides of the variables $A_{\alpha,\beta}$
with values $\nf(\alpha w' \beta^{-1})$
for all words $\alpha,\beta \in  \Sigma^*$ with $|\alpha|,|\beta| \le L$.
The lower bounds on the lengths of
$\hat{v},\hat{x},\hat{y},\hat{z}$ allow us to apply
Proposition \ref{prop:backup2} to the quadrilaterals with sides
labelled $\sigma, \ell_B, \eta, vx$ and $\theta, r_B, \tau, yz$, respectively.
We can compute in polynomial time  words $\mu,\nu \in  \Sigma^*$ with
$|\mu|,|\nu| \le L$ and factorisations $\ell_B = v'x'$, $r_B = y'z'$ such
that $\sigma v' =_G v \mu$, $x \eta =_G \mu x'$, $\theta y' =_G y\nu$, and
$\nu z' =_G z \tau$. 
By the triangle inequality, the words $x'$ and $y'$
satisfy  $|\widehat{x'}|,\,|\widehat{y'}| \ge \ff(2L)$.

Now consider the quadrilateral with sides labelled
$x'u'y'$, $\nf(\alpha\mu)$, $\nf(\beta\nu)$, and
$\nf(\alpha\mu x'u'y'\nu^{-1}\beta^{-1})$.
Since  $|\widehat{x'}|,\,|\widehat{y'}|, |\widehat{u'}| \ge \ff(2L)$  and
$|\nf(\alpha\mu)|, |\nf(\nu\beta)| \leq 2L$, we can again make two
applications of Proposition \ref{prop:backup2} to show that
there exist words $\chi,\psi \in  \Sigma^*$ with $|\chi|,\,|\psi| \le L' \le L$
such that the word
\begin{eqnarray*}
\lefteqn{\nf(\alpha\mu x' \chi^{-1})\, \val_{\cS}(B_{\chi,\psi}) \,
\nf(\psi y' \nu^{-1}\beta^{-1}) =}\\
&& \nf(\alpha\mu x' \chi^{-1})\,
\nf(\chi u' \psi^{-1}) \,\nf(\psi y' \nu^{-1}\beta^{-1})
\end{eqnarray*}
is $\nf$-reduced, in which case the above word is
$\nf(\alpha\mu x'  u'  y' \nu^{-1}\beta^{-1}) = \nf(\alpha w' \beta^{-1})$;
see Figure \ref{fig:case3}.
As before, we can find these words $\chi,\psi$ in polynomial time.
Finally (using Proposition \ref{prop:slpshort}), we define
\slps $\cY$ and $\cZ$ with values $\nf(\alpha\mu x' \chi^{-1})$ and
$\nf(\psi y' \nu^{-1}\beta^{-1})$, include their variables within $\cU$, and then
define $\rho_\cS(A_{\alpha,\beta}) := S_\cY\, B_{\chi,\psi} \, S_\cZ.$
This concludes the definition of the right-hand sides for the variables
$A_{\alpha,\beta}$.

We complete the definition of the \slp $\cS$ by adding a start variable
$S_\cS$ to $\cS$ and setting
$\rho_\cS(S_\cS) := S_\ell S_{\emptyword,\emptyword} S_r$,
where $S := S_\cU$ is the start variable of $\cU$ and $S_\ell$ and $S_r$
are the variables with values $\ell_S$ and $r_S$.
This ensures $\val(\cS) = \ell_S \, \nf(s')\, r_S$, where $s'$ is such that
$\ell_S s' r_S = \val(\cU)$.
But we are assuming that $\cU$ is $\nf$-reduced, so
$s'$ is also $\nf$-reduced and we get
$\val(\cS) = \ell_S \, \nf(s') \, r_S = \ell_S s' r_S = \val(\cU)$.
\end{proof}

\begin{proposition}\label{prop:tcslp-tslp}
Let $G,\Sigma$ and $L$ be as in Theorem~\ref{thm:convert}. Then,
given an $\nf$-reduced non-splitting \tcslp $\cT$ for $G$ 
over $\Sigma$ with $\JJ_\cT \le L$,
such that each of its cut operators is non-splitting and
specified relative to compression,
we can compute in polynomial time an $\nf$-reduced non-splitting \tslp $\cU$
with $\JJ_{\cU} \le L$ and $\val(\cU) = \val(\cT)$,
whose size is bounded by a polynomial function of $|\cT|$.
\end{proposition}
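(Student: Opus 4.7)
The plan is to process the variables of $\cT$ in order of increasing height, eliminating cut operators by pushing them down through the grammar tree. I maintain the invariant that by the time variable $A$ is processed, all variables of lower height in $\cT$ are already represented in $\cU$ in cut-free form. I apply Proposition~\ref{prop:comproot} at the outset so that components of values have roots, and use Corollary~\ref{cor:gamlen} to translate freely between the $[i:j)$ and compressed $[[k:l))$ specifications of cut operators. The non-splitting hypothesis together with the relative-to-compression specification ensures that every cut endpoint lands on a component boundary, and this property is preserved as cuts are pushed downward.

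When a cut $\rho_\cT(A) = B[[k:l))$ is encountered, I push it downward according to the form of $\rho_\cT(B)$. If $\rho_\cT(B) = a \in \Sigma$, then $\val_\cT(A) \in \{a, \emptyword\}$, handled directly. If $\rho_\cT(B) = CD$, I compute $m := |\widehat{\val_\cT(C)}|$ via Proposition~\ref{prop:slp_results}\,(iii) and replace the cut by either a suffix cut on $C$ alone, a prefix cut on $D$ alone, or their concatenation $\rho_\cU(A) := A_1 A_2$ with $\rho_\cU(A_1) := C[[k:m))$ and $\rho_\cU(A_2) := D[[0:l-m))$, recursing. Following the approach from the \cslp-to-\slp conversion (Proposition~\ref{prop:cslps}\,(i)), the pushdown only ever produces prefix cuts $V[[0:l'))$ or suffix cuts $V[[k':|\widehat{\val_\cT(V)}|))$ of sub-variables, so each variable $V$ of $\cT$ gives rise to at most polynomially many cut instances in $\cU$. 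Nested cuts $\rho_\cT(B) = B'[[k'':l''))$ are handled by first processing $B$, so that when we return to $A$ its argument is cut-free.

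The main obstacle is the tether case $\rho_\cT(B) = D\langle\alpha,\beta\rangle$, where $\val_\cT(B) = \nf(\alpha\val_\cT(D)\beta^{-1})$ and a cut of $\val_\cT(B)$ does not directly correspond to a cut of $\val_\cT(D)$. Here I appeal to Proposition~\ref{prop:ftqd} applied to the words $u := \val_\cT(B)$ and $v := \alpha\val_\cT(D)\beta^{-1}$, whose compressed forms over $\widehat{\Sigma}$ are quasigeodesics without backtracking (by Lemma~\ref{lem:qgeo} for $u$, and directly for $v$ since $\cT$ is $\nf$-reduced and non-splitting). The asynchronous fellow-travelling, together with the observation that both paths meet at component-boundary vertices, yields words $\alpha', \beta' \in \Sigma^*$ of length at most $L$ and indices $k', l'$ such that $\val_\cT(A) = \nf(\alpha' \val_\cT(D)[[k':l')) (\beta')^{-1})$, provided $k$ and $l$ are sufficiently far from the endpoints of $u$. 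I then introduce a new cut variable $D'$ with $\rho_\cU(D') := D[[k':l'))$ and set $\rho_\cU(A) := D'\langle\alpha', \beta'\rangle$, recursing on $D'$. When $k$ or $l$ lies within the fellow-travelling constant of an endpoint of $u$, the near-endpoint portion of $\val_\cT(A)$ is absorbed into a modified tether string of bounded length, again reducing to an inner cut on $D$.

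Finally, I verify that $\cU$ inherits the required properties. The values of all variables are preserved, so $\cU$ is $\nf$-reduced; the new cut operators introduced are non-splitting by construction (they cut at component boundaries); and any new tether strings have length at most $L$, so $\JJ_\cU \le L$. The number of new variables introduced per pushdown step is bounded, and the total number of prefix and suffix cuts needed per $\cT$-variable is bounded polynomially, yielding an $\nf$-reduced non-splitting \tslp $\cU$ of polynomial size, constructed in polynomial time.
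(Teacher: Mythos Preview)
Your overall strategy of pushing cut operators downward through the grammar tree matches the paper's, and the handling of the terminal and concatenation cases is essentially correct. However, there are two genuine gaps in the tether case.

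First, your near-endpoint handling is incorrect. You write that when the cut index $k$ (or $l$) lies within the fellow-travelling constant of an endpoint, ``the near-endpoint portion of $\val_\cT(A)$ is absorbed into a modified tether string of bounded length''. But a subword of $\val_\cT(B)$ that is short in \emph{compressed} length over $\widehat{\Sigma}$ can be arbitrarily long over $\Sigma$, because a single component may have unbounded $\Sigma$-length; it therefore cannot be absorbed into a tether string, which must have $\Sigma$-length at most $L$. The paper handles this differently: when $|\widehat{v_1}| < \ff(L)$ (with $v_1 = \val_\cT(A)$), it uses Proposition~\ref{prop:slpshort} to compute an explicit \slp for $v_1$ and splices that \slp into $\cU$; in the mixed case $|\widehat{v_1}| \ge \ff(L)$, $|\widehat{v_2}| < \ff(L)$, it cuts at a safe distance $\ff(L)$ from the end and attaches a short \slp for the remaining piece. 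Both of these require having an actual \slp for $\val_\cT(B)$ on hand.

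Second, and related, you never invoke Proposition~\ref{prop:tslp-slp}, which the paper uses as an essential subroutine: once the variables below $A$ are cut-free (hence form a \tslp), that proposition produces an \slp with the same value, and only then can you legitimately compute compressed lengths, apply Proposition~\ref{prop:slpshort}, and verify candidate factorisations via Proposition~\ref{prop:slp_results}\,(vi). Your appeal to Proposition~\ref{prop:slp_results}\,(iii) for lengths and to Proposition~\ref{prop:comproot} ``at the outset'' presupposes \slp input, which you do not have. A smaller but related point: you rely on Proposition~\ref{prop:ftqd}, whose fellow-travelling constant $\ee'(\lambda,c,L)$ is not known to be bounded by $L$; the paper instead uses Proposition~\ref{prop:backup2}, whose constant $L'$ was deliberately built into the definition of $L$ in Section~\ref{subsec:relhyp_param2}, and it locates the correct cut index $j$ on $u$ and tether $\eta$ by an exhaustive search over $|i-j|\le 2L$ and $|\eta|\le L$, verifying each candidate by \slp equality.
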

\begin{proof}
We follow the proof of \cite[Lemma 6.5]{HLS}.
The idea of the proof is taken from \cite{Hag00}, where it is shown that a
\cslp can be transformed in polynomial time into an \slp with the same value.
Let $\cT = (V_\cT,S_\cT,\rho_\cT)$ be the input TCSLP.
As discussed in Section~\ref{subsec:extend_slps},
we can assume that all right-hand sides from $(V_\cT \cup \Sigma)^*$ are
of the form $a \in \Sigma$ or $BC$ with $B,C \in V_\cT$.

Consider a variable $A$ such that $\rho_\cT(A) = B[[:i))$; the case that
$\rho_\cT(A) = B[[i:))$ can be dealt with analogously.
By considering the variables in order of increasing height,
we can assume that no cut operator occurs in the right-hand side of any
variable $A'$ with $\Ht(A') < \Ht(A)$.
Using Proposition \ref{prop:tslp-slp} we can compute
an \slp with value $\val_{\cT}(B)$ and then use
Corollary~\ref{cor:gamlen}
to compute $n_B:= |\widehat{\val_{\cT}(B)}|$ in polynomial time.

Now we show how to eliminate the cut operator in $\rho_\cT(A)$. This involves
adding at most $\Ht(\cT)$ new variables to the TCSLP. Moreover
the height of the \tcslp after the cut elimination will still be bounded by
$\Ht(\cT)$. Hence, the final \tslp has at most $\Ht(\cT) \cdot |V|$
variables, and its size is polynomially bounded.

The idea of the cut elimination is to push the cut operator towards
variables of lesser height.
For this, we need to consider the various possibilities
for the right-hand side of $B$.

{\bf Case 1.} $\rho_\cT(B) = a \in \Sigma$. If $i = 1$ we define
$\rho_\cU(A) := a$, and if $i = 0$ we define $\rho_\cU(A) := \emptyword$.

{\bf Case 2.} $\rho_\cT(B) = CD$ with $C,D \in V$.
Since we are assuming that the cut operator in $\rho_\cT(A) = B[[i:))$
is non-splitting, $\val_\cU(B)$ cannot consist of a single 
component, and so $\val_\cU(C)$ and $\val_\cU(D)$ must consist of complete
components of $\val_\cU(B)$ together with subwords in $(\Sigma \setminus \cH)^*$.
Define $n_C := |\widehat{\val_\cT(C)}|$.
If $i \leq n_C$ then we define $\rho_\cU(A) := C[[:i))$.
If $i > n_C$ then we define $\rho_\cU(A) := CX$ for a new variable $X$  and set
$\rho_\cU(X) := D[[:i-n_C))$.
We then continue with the elimination of the remaining cut operator in $C[[:i))$
or in $D[[:i-n_C))$.

{\bf Case 3.} $\rho_\cT(B) = C\langle \alpha,\beta \rangle$ with $C \in V$ and
$\alpha,\beta \in \Sigma^*$ with $|\alpha|,|\beta| \le \JJ_{\cT} \le L$.
Let $u := \val_{\cT}(C)$, $v := \val_{\cT}(B)=_G \alpha u\beta^{-1}$, and
$v_1=\val_{\cT}(A)$. So $|\widehat{v_1}| = i$
and, where we write $v=v_1v_2$, our condition
on the non-splitting of cut operators ensures that not only $v_1$ but also
$v_2$ must consist of complete components of $v$ together with subwords in
$(\Sigma \setminus \cH)^*$; so $|\widehat{v_2}|=n_B-i$.
Thus, we have $\val_{\cT}(A) = v_1$ and $v = \nf(\alpha u \beta^{-1})$.
By Proposition~\ref{prop:tslp-slp} we can assume that we have computed \slps
with values  $u$ and $v$ and we may assume by Proposition \ref{prop:comproot}
that all components of $u$ and $v$ have roots in these \slps.

Suppose first that $|\widehat{v_1}|,|\widehat{v_2}| \ge \ff(L)$.
(This corresponds to Case 3.3 of the proof of \cite[Lemma 6.5]{HLS}.)
Then by Proposition \ref{prop:backup2} there exists a factorisation
$u = u_1 u_2$ and $\eta \in \Sigma^*$ with $|\eta| \le L$ such that
$v_1 =_G \alpha u_1 \eta^{-1}$ and $v_2 =_G \eta u_2 \beta^{-1}$.
We note that $|\hat{\alpha}|$, $|\hat{\beta}|$, $|\hat{\eta}|\le L$,
and applying the triangle inequality in a quadrilateral
within $\widehat{\Gamma}$ with geodesic sides labelled by
$\hat{\alpha}$, $\widehat{v_1}$, $\hat{\eta}$, $\widehat{u_1}$,
we see that $i-2L \leq |\widehat{u_1}| \leq i + 2L$,
and so we can find such a factorisation of $u$ in polynomial time
by computing \slps for the words $u[[:j))$ and $u[[j:))$
for every integer $j$ with $|i-j|\leq 2L$.  Then we apply
Proposition~\ref{prop:tslp-slp} and compute \slps for the words
$w_1 := \nf(\alpha u[[:j)) \eta^{-1})$ and
$w_2 := \nf(\eta u[[j:)) \beta^{-1})$
for every $\eta \in \Sigma^*$ with $|\eta| \le L$.
Finally we can, by using Proposition \ref{prop:slp_results}\,(vi),
check whether $v_1 = w_1$ and $v_2 = w_2$
(we are guaranteed to find at least one such $j$ and $\eta$),
and we define $\rho_\cU(A) := X \langle \alpha,\eta\rangle$
for a new variable $X$ and set $\rho_\cU(X) := C[[:j))$.
We then continue with the elimination of the cut operator in $C[[:j))$.

The other two cases are a little more complicated than in \cite{HLS}. Suppose
first that  $|\widehat{v_1}| < \ff(L)$. Then by Proposition \ref{prop:slpshort},
in polynomial time we can compute an \slp $\cT_A$ with
$\val(\cT_A) = \val(A) = v_1$. We include the variables and productions of
$\cT_A$ as part of the \tslp $\cU$ that we are constructing, and
define $\rho_\cU(A) := S_{\cT_A}$.

Otherwise we have $|\widehat{v_1}| \ge \ff(L)$ and $|\widehat{v_2}| < \ff(L)$.
Let $k:= |\hat{v}| - \ff(L)$. Then, as above, we can
use Proposition \ref{prop:tcslp-tslp} together with
Proposition \ref{prop:backup2} to find $l$ and a word $\eta \in \Sigma^*$ with
$|\eta| \le L$ such that $v[[:k)) =_G \alpha u[[:l)) \eta^{-1}$.
Now we introduce
a new variable $X$ with $\rho_\cU(X) = C[[:l))$.
But in addition, using  Proposition \ref{prop:slpshort}
we compute, in polynomial time,
an \slp $\cT_A$ with $\val(\cT_A) = v[[k:i))$, include
the variables and productions of $\cT_A$ in $\cU$, and
and define $\rho_\cU(A) := X \langle \alpha,\eta \rangle S_{\cT_A}$.
Then $\val(A) = v[[:k))v[[k:i)) = v[[:i)) = v_1$ as required.
As before, we continue with the elimination of the cut operators below 
$C[[:l))$.

Since each elimination of a cut operator in $\rho(A)$ can lead to further
such eliminations in the variables below $A$, the total number of such
eliminations in the processing of $\cT$ is bounded by $\Ht(\cT)^2$.
\end{proof}

With the completion of the proofs of Propositions~\ref{prop:tslp-slp} and 
~\ref{prop:tcslp-tslp} we have now completed the proof of 
Theorem~\ref{thm:convert}.
The following corollary of that theorem will be used in the final section.
\begin{corollary}\label{cor:addtail}
Let $\cG$ be an \slp over $\Sigma$ for which $w := \val(\cG)$ is $\nf$-reduced,
and let $v \in \Sigma^*$ have length at most $L$.  Then we can, in polynomial
time, compute an \slp $\cS$ over $\Sigma$ with $\val(\cS) = \nf(wv)$.
\end{corollary}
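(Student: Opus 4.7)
The plan is to reduce the corollary to an application of Theorem~\ref{thm:convert} on a tiny \tslp built from $\cG$ and $v$, treated as a \tcslp with no cut operators. First I would apply Proposition~\ref{prop:comproot} to $\cG$ in polynomial time to obtain an equivalent \slp $\cG'$ in Chomsky normal form with the property that every component of $\val_{\cG'}(A)$ has a root in $\cG'_A$ for every variable $A$. Remark~\ref{rem:subword} then guarantees that $\cG'$ is non-splitting and that each $\val_{\cG'}(A)$ is $\nf$-reduced, using the hypothesis that $w = \val(\cG)$ is $\nf$-reduced.

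Next I would form a \tslp $\cU$ by adjoining to $\cG'$ a single new start variable $S$ with the one new production $\rho_\cU(S) := S_{\cG'}\langle \emptyword, v^{-1}\rangle$, and setting $\JJ_\cU := L$. This is legal because $|v^{-1}| = |v| \le L$, and by the definition of tether evaluation we have $\val_\cU(S) = \nf(\emptyword \cdot w \cdot (v^{-1})^{-1}) = \nf(wv)$. All variables of $\cU$ other than $S$ have the same values as in $\cG'$, so they remain $\nf$-reduced, and $\val(S) = \nf(wv)$ is $\nf$-reduced by definition of $\nf$. The non-splitting property is inherited from $\cG'$, since it constrains only the concatenation productions $A \to BC$, which are all carried over unchanged; the newly added tethered production involves no concatenation of sub-values, so it cannot split a component.

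Theorem~\ref{thm:convert} then applies to $\cU$ (the hypotheses on cut operators are vacuous as $\cU$ has none), producing in time polynomial in $|\cU| = O(|\cG| + L)$ an \slp $\cS$ with $\val(\cS) = \val(\cU) = \nf(wv)$, which is exactly what is required. The main conceptual step is recognising that the single-tether construction fits cleanly into the framework of Theorem~\ref{thm:convert}; once that is set up, verification of the hypotheses reduces to standard applications of Proposition~\ref{prop:comproot} and Remark~\ref{rem:subword}, and no further work is needed.
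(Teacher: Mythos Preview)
Your proposal is correct and follows essentially the same route as the paper: adjoin a single tethered start variable $S$ with $\rho(S)=S_{\cG}\langle \emptyword,v^{-1}\rangle$ and then invoke Theorem~\ref{thm:convert}. The only difference is that you explicitly preprocess with Proposition~\ref{prop:comproot} and appeal to Remark~\ref{rem:subword} to justify the $\nf$-reduced and non-splitting hypotheses, whereas the paper's proof leaves this implicit; your extra care is harmless and arguably an improvement.
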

\begin{proof} From $\cG$, we can immediately define a \tslp $\cT$ 
with $\val(\cT) = \nf(wv)$ and $\JJ_\cT = L$ by adjoining a new start
variable $S_\cT$ together with the production
$\rho(S_\cT) = S_\cG \langle \emptyword,v^{-1} \rangle$. We
can then construct $\cS$ with $\val(\cS) = \nf(wv)$ in polynomial time,
by Theorem~\ref{thm:convert}.
\end{proof}

\section{The final step.}
\label{sec:slextcslp}

Since a word represents the identity element if and only if its $\nf$-reduction
is the empty word, the main theorem, Theorem~\ref{thm:main},
follows immediately from the combination of the following
Theorem~\ref{thm:slextcslp} with Theorem~\ref{thm:convert}.
Hence the proof of Theorem~\ref{thm:slextcslp} is our final step.

\begin{theorem}\label{thm:slextcslp}
Let $G$ be a group hyperbolic relative to a collection
of free abelian subgroups, and suppose that a generating set $\Sigma$ for $G$,
and integer $L$ are selected as in
Sections ~\ref{subsec:relhyp_param1}, \ref{subsec:relhyp_param2}.
Let $\cG$ be an \slp for $G$ over $\Sigma$.
Then we can construct, in polynomial time, a non-splitting
$\nf$-reduced \tcslp $\cT$ with $\val(\cT) = \nf(\val(\cG))$ and
$\JJ_\cT \le L$, where each cut operator of $\cT$ is non-splitting and
specified relative to compression.
\end{theorem}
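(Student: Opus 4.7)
The plan is to process the variables of $\cG$ by induction on height, constructing for each $A$ a \tcslp $\cT_A$ with $\val(\cT_A)=\nf(\val_\cG(A))$ that satisfies all the stated structural requirements, and taking $\cT:=\cT_{S_\cG}$. First I would apply Proposition~\ref{prop:comproot} and assume that $\cG$ is in Chomsky normal form with every component of $\val(\cG)$ having a root in $\cG$, so that cut operators in the \tcslps we build may be specified relative to compression via Corollary~\ref{cor:gamlen}. The base case $\rho_\cG(A)=a$ is immediate: $\cT_A$ has a single start variable with right-hand side $a$, and $\nf(a)=a$ by our choice of $\Sigma$.

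The heart of the proof is the inductive step $\rho_\cG(A)=BC$. Writing $u:=\val(\cT_B)$, $v:=\val(\cT_C)$, both of which lie in $\cL_0$, and $w:=\nf(uv)$, I would first merge at the junction, via the shortlex construction of Proposition~\ref{prop:abslp}, any terminal component of $u$ with an initial component of $v$ that lies in a common parabolic subgroup $H_i$; after this preprocessing $\widehat{uv}$ (with the merged parabolic letter replacing the abutting boundary components) is a $(1,2)$-quasigeodesic between the endpoints of the geodesic $\widehat{w}$, and all its $H_i$-components lie in the biautomatic structure. Proposition~\ref{prop:ftqd} (equivalently Proposition~\ref{prop:backup2}) then yields $L$-fellow-travelling in $\Gamma$ between the paths labelled by $uv$ and $w$, confining the reduction zone to a neighbourhood of bounded compressed radius around the junction.

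Using this fellow-travelling I would locate compressed indices $k$ in $\widehat{u}$ (safely to the left of its right end) and $k'$ in $\widehat{v}$ (safely to the right of its left end), together with compressed indices $\ell<\ell'$ in $\widehat{w}$, and short words $\alpha,\beta\in\Sigma^*$ of length at most $L$, such that $u[[:k))\,\alpha=_G w[[:\ell))$ and $\beta^{-1}\,v[[k':))=_G w[[\ell':))$. Then $\cT_A$ is defined as the disjoint union of $\cT_B$, $\cT_C$ together with: cut variables $A_1$ with $\rho(A_1)=S_{\cT_B}[[:k))$ and $A_2$ with $\rho(A_2)=S_{\cT_C}[[k':))$; tethered variables $A_1'=A_1\langle\emptyword,\alpha^{-1}\rangle$ and $A_2'=A_2\langle\beta,\emptyword\rangle$ with values $w[[:\ell))$ and $w[[\ell':))$ respectively; an \slp $\mathcal{M}$ for the middle piece $m:=w[[\ell:\ell'))=\nf(\alpha^{-1}\,u[[k:))\,v[[:k'))\,\beta^{-1})$, computable by invoking Theorem~\ref{thm:convert} on $\cT_B,\cT_C$ to obtain \slps for the bounded-compressed-length tails $u[[k:))$ and $v[[:k'))$ and then applying Proposition~\ref{prop:slpshort}, since $|\widehat{m}|$ is bounded by a constant; and a start variable $S_{\cT_A}$ with right-hand side $A_1'\,S_{\mathcal{M}}\,A_2'$. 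All cuts sit at component boundaries (hence are non-splitting and specified relative to compression), all tethers have length at most $L$, $m$ is $\nf$-reduced by Remark~\ref{rem:subclosure} because it is a union of complete components of $w$, and $\val(\cT_A)=w$.

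The main obstacle will be the geometric analysis in the presence of long parabolic components: ensuring that the fellow-travelling vertices produced by Proposition~\ref{prop:backup2} can always be chosen at compressed positions that align coherently as component boundaries on $\widehat{u}$, $\widehat{v}$ and $\widehat{w}$ simultaneously, and that the component-merging preprocessing at the junction does not trigger cancellation beyond the bounded zone we allocate for $\mathcal{M}$. A secondary issue is the polynomial size bound on $\cT$: per inductive step we adjoin only $O(1)$ new \tcslp variables plus the \slp $\mathcal{M}$, whose size is controlled by Proposition~\ref{prop:slpshort}, so the total size of $\cT$ is polynomial in $|\cG|$ provided we extract the bounded-compressed-length subwords needed for $\mathcal{M}$ using Corollary~\ref{cor:gamlen} and Proposition~\ref{prop:slp_results}\,(iv) rather than rebuilding large ancillary \slps from scratch at each recursion level.
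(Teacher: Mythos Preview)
Your overall inductive architecture and the shape of the output \tcslp (a left tethered cut of $\cT_B$, a short middle \slp, a right tethered cut of $\cT_C$) match the paper exactly. The genuine gap is the geometric step. You assert that after merging abutting parabolic components at the junction, $\widehat{uv}$ is a $(1,2)$--quasigeodesic and hence $L$--fellow-travels $\widehat{w}$, with the reduction zone confined to a bounded compressed neighbourhood of the junction $b$. This is false: take $v =_G u^{-1}$ with $|\widehat{u}|$ arbitrarily large and no parabolic letter at the boundary. Then $\widehat{uv}$ has compressed length $2|\widehat{u}|$ while $\widehat{w}$ is empty, so $\widehat{uv}$ is not a $(\lambda,c)$--quasigeodesic for any fixed constants, neither Proposition~\ref{prop:ftqd} nor Proposition~\ref{prop:backup2} applies to the pair $(uv,w)$, and the ``reduction zone'' is all of $u$ and $v$, not a bounded neighbourhood of the junction. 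More generally the amount of cancellation between $u$ and $v$ is unbounded, and your compressed indices $k$, $k'$ cannot be chosen a priori ``safely to the left/right'' of the junction.

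What the paper does instead is treat the three sides $\widehat{v_1}$, $\widehat{v_2}$, $\widehat{v_3}$ as a geodesic triangle in the $\delta$--hyperbolic space $\widehat{\Gamma}$, use thinness to get meeting vertices $d_1,d_2,d_3$, and then \emph{binary search} along $\gamma_{\widehat{v_1}}$ (and, if needed, along $\gamma_{\widehat{v_2}}$) to locate, in $O(\log|\widehat{v_1}|)$ probes, a vertex $b_1$ near $d_1$ past which $\gamma_{\widehat{v_1}}$ stops having corresponding vertices on $\gamma_{\widehat{v_2}}$ and starts having them on $\gamma_{\widehat{v_3}}$. Only then does Proposition~\ref{prop:backup} give corresponding vertices $a_1\leftrightarrow a_3$ and $c_2\leftrightarrow c_3$ at bounded compressed distance from the (unknown in advance) meeting points, and the tethers $\zeta,\eta,\theta$ are found by exhaustive search over words of length at most $L$. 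The binary search and the meeting-vertex analysis are precisely the missing ingredients in your plan; once you insert them, the rest of your argument (cuts specified relative to compression, Proposition~\ref{prop:slpshort} for the bounded middle piece, polynomial size accounting) goes through as in the paper.
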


The following lemma, which is a special case of Theorem ~\ref{thm:slextcslp},
will be used within its proof, and applied to sub-\slps of the \slp $\cG$ within
the statement of the theorem.  Since the proof of the lemma involves similar but
simpler arguments to that of the theorem, it is convenient to defer its proof.  
\begin{lemma}\label{lem:geo2slextcslp}
Let $G$, $\Sigma$ and $\cG$ be as in Theorem ~\ref{thm:slextcslp},
and assume also that $\widehat{\val(\cG)}$ is a geodesic word.
Then in polynomial time we can construct a non-splitting
$\nf$-reduced \tcslp $\cT$ with $\val(\cT) = \nf(\val(\cT))$  and
$\JJ_\cT \le L$, where each cut operator of $\cT$ is non-splitting and
specified relative to compression.
\end{lemma}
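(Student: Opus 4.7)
My plan adapts the strategy of Proposition~\ref{prop:tslp-slp} by working bottom-up on $\cG$ rather than on a \tslp. After preprocessing I introduce, for each variable $A$ of $\cG$ and each pair of boundary words $\alpha,\beta \in \Sigma^*$ with $|\alpha|,|\beta| \le L$, a variable $A_{\alpha,\beta}$ whose value will be $\nf(\alpha\val(A)\beta^{-1})$; since this value lies in $\cL_0$, every such variable is automatically $\nf$-reduced.

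I would first preprocess $\cG$: use Proposition~\ref{prop:comproot} to put $\cG$ into Chomsky normal form with every component rooted (so that by Remark~\ref{rem:subword} it is non-splitting), and then for every variable $A$ with $\val(A) \in \Sigma_i^*$ replace the sub-\slp $\cG_A$ by the compact shortlex \slp given by Proposition~\ref{prop:abslp}. The resulting \slp $\cG^*$ has value $w^* \in \cL$ by Proposition~\ref{prop:automatic}, since $\widehat{w^*} = \widehat{\val(\cG)}$ is still geodesic and every $H_i$-component of $w^*$ is now shortlex reduced.

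I then process the variables of $\cG^*$ in increasing order of height. For a leaf $\rho(A) = a \in \Sigma$, the word $\alpha a \beta^{-1}$ has length at most $2L+1$, so $\nf(\alpha a \beta^{-1})$ can be computed in bounded time using the $\cL_0$ word acceptor and stored as a constant-size terminal right-hand side of $A_{\alpha,\beta}$. For $\rho(A) = BC$ in the generic case where $|\widehat{\val(B)}|,|\widehat{\val(C)}| \ge \ff(L)$ (with $\ff$ the function from Proposition~\ref{prop:backup2}), two applications of Proposition~\ref{prop:backup2} to the geodesic quadrilateral in $\widehat\Gamma$ with sides $\widehat{\alpha}, \widehat{\val(A)}, \widehat{\beta}, \widehat{\nf(\alpha\val(A)\beta^{-1})}$ produce a word $\gamma \in \Sigma^*$ with $|\gamma| \le L$ such that
\[
\nf(\alpha\val(B)\gamma^{-1})\,\nf(\gamma\val(C)\beta^{-1}) \;=\; \nf(\alpha\val(A)\beta^{-1})
\]
is already $\nf$-reduced. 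A suitable $\gamma$ is located by enumerating the constantly many candidates of length at most $L$ and checking, via Proposition~\ref{prop:slp_results}\,(v)--(vi), whether the candidate makes the concatenation $\nf$-reduced; I then set $\rho_\cT(A_{\alpha,\beta}) := B_{\alpha,\gamma}\,C_{\gamma,\beta}$. The degenerate cases where $\val(B)$ or $\val(C)$ is short are handled by direct computation using Proposition~\ref{prop:slpshort}, mirroring Cases~2.2--2.4 of Proposition~\ref{prop:tslp-slp}. The start variable of $\cT$ is $S_{\emptyword,\emptyword}$ for $S := S_{\cG^*}$, so $\val(\cT) = \nf(w^*) = \nf(\val(\cG))$.

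The main obstacle I expect is verifying that the resulting $\cT$ is non-splitting: for each root occurrence of $A_{\alpha,\beta}$ in $\nf(w)$ the corresponding subword $\nf(\alpha\val(A)\beta^{-1})$ must consist of complete components of $\nf(w)$. This will follow from the non-splitting of $\cG^*$ together with the Bounded Coset Penetration Property, which transports component boundaries between $\widehat{w^*}$ and $\widehat{\nf(w)}$; the $\nf$-reducedness of the chosen concatenation then forces the split to align with a component boundary of $\nf(w)$. The total number of variables in $\cT$ is bounded by $|V_{\cG^*}| \cdot |\Sigma|^{2L}$ with $L$ and $|\Sigma|$ depending only on $G$, and each variable is constructed in polynomial time, so the whole construction is polynomial in $|\cG|$. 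Since the final $\cT$ uses no cut or tether operators, the constraints $\JJ_\cT \le L$ and that each cut operator is non-splitting and specified relative to compression are trivially satisfied.
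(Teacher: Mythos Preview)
Your approach is valid and takes a genuinely different route from the paper's. The paper proves the lemma by reusing the hyperbolic-triangle machinery of Theorem~\ref{thm:slextcslp}: since $\widehat{\val_\cG(B)}\,\widehat{\val_\cG(C)}$ is geodesic, the triangle in $\widehat\Gamma$ degenerates (the meeting point sits at $b$), so one takes $a_1,c_2$ at distance $K$ from $b$, sets $\eta=\emptyword$, and proceeds exactly as in the main theorem, producing a genuine \tcslp with cut and tether operators. You instead follow the pattern of Lemma~\ref{lem:ftwd}: after shortlex-reducing the components so that $w^*\in\cL$, you build variables $A_{\alpha,\beta}$ bottom-up via Proposition~\ref{prop:backup2}, ending with a pure \slp. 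In fact, once you have $w^*\in\cL$ you could simply invoke Lemma~\ref{lem:ftwd} directly (a geodesic is a $(1,0)$-quasigeodesic without backtracking) and then apply Proposition~\ref{prop:comproot} to the output; this would replace most of your argument and also dissolve the two loose ends below. Your route has the pleasant feature of yielding an \slp outright, so the subsequent appeal to Theorem~\ref{thm:convert} in the proof of Theorem~\ref{thm:slextcslp} becomes unnecessary at that point.

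Two points need tightening. First, the degenerate cases: ``mirroring Cases 2.2--2.4 of Proposition~\ref{prop:tslp-slp}'' does not transfer cleanly, because those cases rely on the tether-depth parameter $\td$ to bound the short side, and you have no such parameter. If $|\widehat{\val(B)}|<\ff(L)$ while $|\widehat{\val(C)}|$ is large, you cannot ``compute directly'', nor can you absorb $\val(B)$ into $\alpha$ (its $\Sigma$-length may be unbounded). The correct model is the case analysis in the proof of Lemma~\ref{lem:ftwd}, which does not attempt to define $A_{\alpha,\beta}$ for \emph{every} pair $(\alpha,\beta)$ but only for those that actually arise from the fellow-travelling; this is enough, and it is what makes the short cases go through. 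Second, your non-splitting argument via BCP is more work than needed: since your output is an \slp, applying Proposition~\ref{prop:comproot} and invoking Remark~\ref{rem:subword} gives non-splitting immediately, and the cut-operator conditions in the lemma statement are then vacuous.
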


\begin{proofof}{Theorem~\ref{thm:slextcslp}}
We know from Proposition \ref{prop:genset}
that $G$ is asynchronously automatic over $\Sigma$ so, by
Proposition~\ref{prop:slp_results}\,(v), we can test in polynomial time
whether the words
defined by \slps over $\Sigma$ are $\nf$-reduced.
We may assume by Proposition~\ref{prop:slp_results} that the given
\slp $\cG= (V_\cG,S_\cG,\rho_\cG)$ is trimmed and in Chomsky normal form,
and by Proposition \ref{prop:comproot} that all components of
$\val(\cG)$ have roots.

The proof follows the same strategy as that of \cite[Theorem 6.7]{HLS},
but the presence of the parabolic subgroups gives rise to complications.
Our aim is to construct a \tcslp $\cT=(V_\cT,S_\cT,\rho_\cT)$ with value
$\nf(\val(\cG))$ that satisfies $\JJ_\cT \le L$,
where $L$ is the constant defined in Section \ref{subsec:relhyp_param2}.

We consider the variables of $\cG$ in order of increasing height;
$V_\cT$ will contain a copy $A$ of each variable $A$ of $\cG$,
together with some auxiliary variables.
We build $\cT$ piece by piece, starting with $V_\cT$ empty.
At each stage, as we consider the variable $A$,
we add  a copy of $A$ and possibly some other new variables to $V_\cT$,
and define the image of $\rho_\cT$ on each of those so as to make
$\val_{\cT}(A) = \nf(\val_\cG(A))$.
Our construction of \tcslps rather than \cslps will ensure a polynomial bound
on the number of new variables we add at each stage and on the size of $\cT$.
That the condition on cut-operators holds will be clear from the construction.

If the variable $A$ under consideration has height one,
then $\rho_\cG(A)=a$, for some $a \in \Sigma$;
in that case, we simply define $\rho_\cT(A)= \nf(a) = a$.
So from now on we suppose that $\Ht(A)>1$, in which case
$\rho_\cG(A)=BC$, for variables $B$, $C$ of height less than $\Ht(A)$.
Since we  have already processed the variables $B$ and $C$,
we know that $\cT$ already contains sub-\tcslps $\cT_B$
and $\cT_C$, with start variables $B_\cT$ and $C_\cT$, and with
$\val(\cT_B) = v_1 := \nf(\val_\cG(B))$,
$\val(\cT_C) = v_2 := \nf(\val_\cG(C))$,
and $\JJ_{\cT_B},\JJ_{\cT_C} \le L$.

By Theorem~\ref{thm:convert}, we can construct, in polynomial
time, \slps $\cS_B$ and $\cS_C$, with the same values as $\cT_B$ and $\cT_C$.
As was the case for $\cG$,
we can ensure that $\cS_B$ and $\cS_C$ are in Chomsky normal form,
and contain roots for all components of $v_1$ and $v_2$, respectively.

We consider a triangle in the Cayley graph $\Gamma = \Gamma(G,\Sigma)$
whose sides are paths labelled by $v_1$, $v_2$ and the $\nf$-reduced
representative $v_3$ of the element $v_1v_2$. Our aim is to
construct $\cT_A$ as a \tcslp with value $v_3$.
Let $a,b$ and $c$ be the vertices of this triangle
with sides from $a$ to $b$, $b$ to $c$ and
$a$ to $c$ labelled by $v_1,v_2,v_3$. So the sides labelled
$\widehat{v_1},\widehat{v_2},\widehat{v_3}$ in the corresponding triangle
in $\widehat{\Gamma}$ are geodesic.

So, since $\widehat{\Gamma}$ is a $\delta$-hyperbolic space,
there are meeting vertices $d_1,d_2,d_3$, with $d_i$ on
$\gamma_{\widehat{v_i}}$ ($i=1,2,3$) and
$d_{\widehat{\Gamma}}(d_i,d_j) \leq \delta$ for $i \neq j$;
see Figure~\ref{fig:hyptri}.
Now let $K := K_1(\delta)$ as defined in Proposition~\ref{prop:backup}, 
and recall that the constant $L$ defined in
Section \ref{subsec:relhyp_param2} satisfies $L \ge L_1(\delta)$.  Now we apply
Proposition \ref{prop:backup}
to the sections of $\gamma_{\widehat{v_1}}$ and
$\gamma_{\widehat{v_2}}$ that join $b$ to $d_1$ and $d_2$,
and so deduce that,
for any vertex $b_1$ of $\widehat{\Gamma}$ on
$\gamma_{\widehat{v_1}}$ between $d_1$ and $b$ 
and distance on $\gamma_{\widehat{v_1}}$ at least $K$ from $d_1$,
there exists a vertex $b_2$, on $\gamma_{\widehat{v_2}}$,
with $d_{\Gamma}(b_1,b_2) \leq L$.
We shall call vertices $b_1,b_2$ of $\widehat{\Gamma}$ with
$d_{\Gamma}(b_1,b_2) \leq L$ that lie on two different sides of the triangle
\emph{corresponding vertices}.
Note that although the particular corresponding vertices $b_1,b_2$
whose existence we have just shown are found within the sections 
of $\gamma_{\widehat{v_1}}$ and $\gamma_{\widehat{v_2}}$ that join $b$ to $d_1$ and $d_2$, in general, corresponding vertices might be found past either or both of $d_1$ and $d_2$ on those paths.

We claim that it is possible to decide in polynomial time whether a
vertex $b_1$ on $\gamma_{\widehat{v_1}}$ has a corresponding vertex
$b_2$ on $\gamma_{\widehat{v}_2}$ and, if so, find $b_2$ together with
the label in $\Sigma^*$ of a path of length at most $L$ in $\Gamma$
joining $b_1$ to $b_2$.

The justification for this claim is as follows.
Let $l$ be the distance in $\widehat{\Gamma}$ from $b$ to $b_1$.
It is straightforward to define an \slp $\overline{S_B}$ with
value $v_1^{-1}=\val(\cS_B)^{-1}$.
The word $v_1^{-1}$ might not be $\nf$-reduced
but its derived word $\widehat{v_1}^{-1}$ is geodesic,
so we can use Lemma \ref{lem:geo2slextcslp} followed by
Theorem~\ref{thm:convert} to construct an \slp with value
equal to $(\nf(v_1^{-1}))[[:l))$.  Since the number of
words over $\Sigma$ of length at most $L$ is bounded above by the constant
$|\Sigma|^{L+1}$, we can in polynomial time, by Corollary \ref{cor:addtail},
compute \slps with values $\nf(\overline{\cS}_B[[:l)) \eta)$ for all words
$\eta \in \Sigma^*$ of length at most $L$. For each of these, we compute the
length $l'$ of its derived word,
and then check whether its value is equal to $\val(\cS_C[[:l')))$.
If so, then the vertex $b_2$ at distance $l'$ from $b$ in $\Gamma$ corresponds
to $b_1$, and $\eta$ is the required path label from $b_1$ to $b_2$.
Furthermore all such corresponding vertices $b_2$ are found by this procedure.

We now consider two possible situations, as follows.
In Case 1, there is either a vertex $a'$ on $\gamma_{\widehat{v_2}}$
that corresponds to the vertex $a$ of $\gamma_{\widehat{v_1}}$, or
there is a vertex $c'$ on $\gamma_{\widehat{v_1}}$
that corresponds to the vertex $c$ of $\gamma_{\widehat{v_2}}$.
In Case 2, no such vertices $a'$ or $c'$ exist.
By the claim above, we can check which case we are in.

Case 2 is more difficult, so we shall deal with that first and provide
a brief description of the argument for Case 1 at the end.
So suppose that we are in Case 2.
Now the vertex $b$ of $\gamma_{\widehat{v_1}}$ has a corresponding vertex
($b$ itself) on $\gamma_{\widehat{v_2}}$, but the vertex $a$ has no
corresponding vertex on $\gamma_{\widehat{v_2}}$.  We need
to find corresponding vertices $b_1$ and $b_2$ on $\gamma_{\widehat{v_1}}$
and $\gamma_{\widehat{v_2}}$ with the additional property that the vertex
$b'_1$ that is at distance 1 from $b_1$ in $\widehat{\Gamma}$,
on $\gamma_{\widehat{v_1}}$, between $b_1$ and $a$, has no corresponding
vertex on $\gamma_{\widehat{v_2}}$.
We do this, as in the proof of \cite[Theorem 6.7]{HLS}, using the 
technique of {\em binary search} 
to find $b_1$ by testing whether various
vertices that we call $b_t$ have corresponding vertices; 
that is, where $l_0$ is the length of
$\gamma_{\widehat{v_1}}$, we first test if the vertex $b_t$ 
at distance $l_0/2$ from $b$ along $\gamma_{\widehat{v_1}}$ has a
corresponding vertex on $\gamma_{\widehat{v_2}}$. 
If $b_t$ has a corresponding vertex,
and the next vertex
$b_t'$ on $\gamma_{\widehat{v_1}}$ has no corresponding vertex on
$\gamma_{\widehat{v_2}}$, then we set $b_1 := b_t$ and set $b_1' := b_t'$,

Otherwise, our next choice for $b_t$ is either
the vertex at distance $l_0/4$ from $b$ 
along $\gamma_{\widehat{v_1}}$
(when our first choice  for $b_t$
has no corresponding vertex) or at distance $3l_0/4$ from $b$
(when our first choices for both $b_t$ and $b_t'$ have corresponding vertices).
We continue searching in this way,
each time in one half of the previous interval of $\gamma_{\widehat{v_1}}$,
until we find the vertices $b_1,b_2,b_1'$ that satisfy the required conditions. 
Note that the time taken for this search is logarithmic in the length 
of $\widehat{v_1}$ and so polynomial in the size of its defining \slp $\cS_B$.

If  $b_1'$ were distance greater than $K$ from  $d_1$ along
$\gamma_{\widehat{v_1}}$ within the section joining  $d_1$ to $b$, then
Proposition~\ref{prop:backup} would imply the existence of a corresponding
vertex for
$b_1'$ on $\gamma_{\widehat{v_2}}$; but we know there is no such vertex.
It follows that 
\[d_{\gamma_{\widehat{v_1}}}(a,b_1') \leq  
d_{\gamma_{\widehat{v_1}}}(a,d_1) + K-1, \ \hbox{\rm and}\ 
d_{\gamma_{\widehat{v_1}}}(a,b_1) \leq  
d_{\gamma_{\widehat{v_1}}}(a,d_1) + K.\]

We claim that
$$d_{\gamma_{\widehat{v_2}}}(c,b_2) \le d_{\gamma_{\widehat{v_2}}}(c,d_2) + 3L+\delta.$$
Since $d_{\widehat{\Gamma}}(d_1,d_2) \le \delta$ and
$d_{\widehat{\Gamma}}(b_1,b_2) \le L$, this follows from the triangle inequality
if $d_{\widehat{\Gamma}}(d_1,b_1) \le 2L$. So suppose not. Then, since $L\ge K$,
$b_1$ must be between $a$ and $d_1$ on $\widehat{v_1}$;
note that this position for $b_1$ is {\em not} as suggested in
Figure~\ref{fig:hyptri}. 
But, by Proposition~\ref{prop:backup}, if
$d_{\gamma_{\widehat{v_2}}}(c,b_2) >  d_{\gamma_{\widehat{v_2}}}(c,d_2) + K$, then
there is a point $b_1''$ on $\widehat{v_1}$ 
between $b$ and $d_1$  with
$d_{\Gamma}(b_2,b_1'') \le L$, and hence also
$d_{\widehat{\Gamma}}(b_2,b_1'') \le L$. But then we have
$2L \ge d_{\widehat{v_1}}(b_1,b_1'') \ge d_{\widehat{v_1}}(b_1,d_1) > 2L$, 
a contradiction.
So $d_{\gamma_{\widehat{v_2}}}(c,b_2) \le d_{\gamma_{\widehat{v_2}}}(c,d_2) + K$ and,
since $K < 3L+\delta$, the claim also holds in this case.

So we have
\begin{eqnarray*}
d_{\Gamma}(b_1,b_2) \leq L,\quad
d_{\gamma_{\widehat{v_1}}}(a,b_1) &\leq&
    d_{\gamma_{\widehat{v_1}}}(a,d_1) + K, \\ 
d_{\gamma_{\widehat{v_2}}}(c,b_2) &\leq& d_{\gamma_{\widehat{v_2}}}(c,d_2) + 
3L + \delta.
\end{eqnarray*} 
Now it follows from Proposition~\ref{prop:backup}
that any vertex on $\gamma_{\widehat{v_1}}$ between $a$ and $d_1$
that is at distance at least $K$ along the curve from $d_1$, must
have a corresponding vertex on $\gamma_{\widehat{v_3}}$ that lies between
$a$ and $d_3$.
So now define $a_1$ to be the vertex on $\gamma_{\widehat{v_1}}$
between $a$ and $d_1$ that is distance $2K$ along the curve from $b_1$
(if there is no such vertex, then define $a_1$ to be $a$).
Then there is a vertex $a_3$ on $\gamma_{\widehat{v_3}}$ between $a$ and $d_3$
that corresponds to $a_1$.

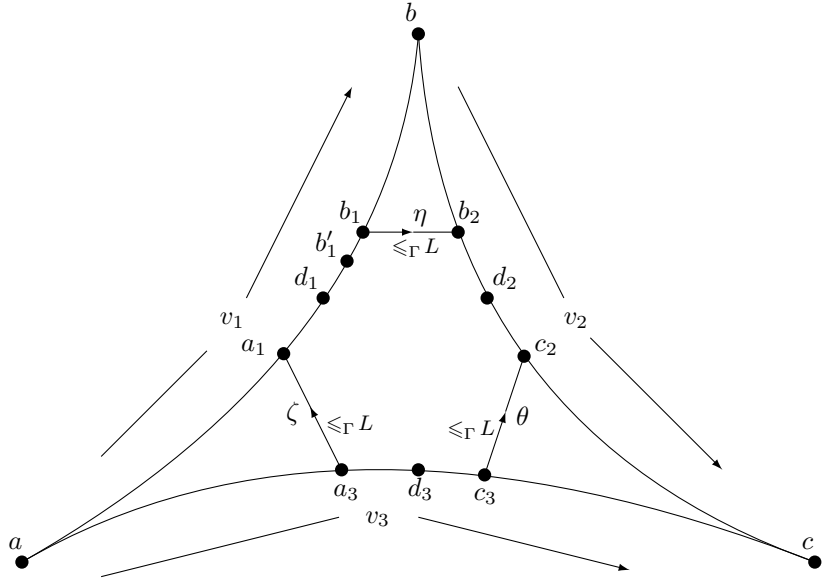
\begin{figure}
\begin{picture}(200,200)(-20,0)
\put(0,0){\circle*{5}} \put(-5,5){$a$}
\put(75,90){$v_1$}
\put(85,100){\vector(1,2){40}}
\put(70,80){\line(-1,-1){40}}
\put(114,100){\circle*{5}} \put(103,105){$d_1$}
\put(176,100){\circle*{5}} \put(178,105){$d_2$}
\put(150,35){\circle*{5}} \put(146,25){$d_3$}

\put(150,200){\circle*{5}} \put(145,205){$b$}
\put(129,125){\circle*{5}} \put(120,130){$b_1$}
\put(123,114){\circle*{5}} \put(111,118){$b'_1$}
\put(165,125){\circle*{5}} \put(165,130){$b_2$}
\put(129,125){\vector(1,0){19}} \put(148,125){\line(1,0){17}}
\put(140,117){\footnotesize{$\leqslant_\Gamma\! L$}}
\put(148,129){$\eta$}

\put(99,79){\circle*{5}} \put(83,80){$a_1$}
\put(121,35){\circle*{5}} \put(118,25){$a_3$}
\put(121,35){\vector(-1,2){12}} \put(110,57){\line(-1,2){10}}
\put(115,50){\footnotesize{$\leqslant_\Gamma\! L$}}
\put(100,54){$\zeta$}

\put(190,78){\circle*{5}} \put(194,80){$c_2$}
\put(175,33){\circle*{5}} \put(171,23.5){$c_3$}
\put(175,33){\vector(1,3){8}} \put(183,57){\line(1,3){7}}
\put(161,48){\footnotesize{$\leqslant_\Gamma\! L$}}
\put(187,52){$\theta$}

\put(205,90){$v_2$}
\put(205,100){\line(-1,2){40}}
\put(215,85){\vector(1,-1){50}}
\put(300,0){\circle*{5}} \put(295,5){$c$}
\put(130,15){$v_3$}
\put(120,17){\line(-4,-1){90}}
\put(150,17){\vector(4,-1){80}}
\qbezier(0,0)(140,80)(150,200)
\qbezier(0,0)(120,70)(300,0)
\qbezier(300,0)(160,50)(150,200)
\end{picture}
\caption{The hyperbolic triangle}\label{fig:hyptri}
\end{figure}

Similarly let $c_2$ be the vertex on $\gamma_{\widehat{v_2}}$
between $c$ and $d_2$ that is distance $K+3L+\delta$ along the curve from $b_2$
(if there is no such vertex, then define $c_2$ to be $c$).
Then there is a vertex $c_3$ on $\gamma_{\widehat{v_3}}$
that corresponds to $c_2$.

There exist words $\zeta$, $\eta$ and $\theta$ over $\Sigma$ each of
length at most $L$, that label paths in $\Gamma$ from $a_3$ to $a_1$, $b_1$ to
$b_2$, and $c_3$ to $c_2$.
We know $\eta$ already, because (by the claim above) we found it
when we defined $b_1$ and $b_2$; to progress further,
we need to find $\zeta$ and $\theta$.
We find these through an exhaustive search process, as we shall now describe.
We generate all possible word pairs $(\zeta,\theta)$ (that is, word pairs of
lengths at most $L$), and check their validity, until we find a solution. 

So suppose that $(\zeta,\theta)$ is a candidate pair.
Define $k_1,l_1,k_2,l_2$ to be the integers for which
$v_1[[k_1:l_1))$ labels the path in $\Gamma$ from 
$a_1$ to $b_1$ and $v_2[[k_2:l_2))$ 
the path from $b_2$ to $c_2$; then $v_1[[k_1:l_1)) = \val(\cS_B[[k_1:l_1)))$
and $v_2[[k_2:l_2)) = \val(\cS_C[[k_2:l_2)))$.
The $\nf$-reduced representative of $v_1[[:k_1))\zeta^{-1}$ is the value of the
\tcslp $\cS_B[[:k_1)) \langle \emptyword,\zeta \rangle$,
and we can find an \slp $\cS_1$
with the same value in polynomial time, by Theorem~\ref{thm:convert}.

The word $\zeta v_1[[k_1:l_1))\eta v_2[[k_2:l_2))\theta^{-1}$
has length at most $3K+6L+\delta$ over $\widehat{\Sigma}$,
and is the value of the \cslp 
$\zeta\cS_B[[k_1:l_1))\eta\cS_C[[k_2:l_2))\theta^{-1}$.
By Proposition \ref{prop:slpshort} applied with $\kappa = 3K+6L+\delta$,
we can (in polynomial time) find an \slp $\cS_2$
with value $\nf(\zeta v_1[[k_1:l_1))\eta v_2[[k_2:l_2))\theta^{-1}).$
The $\nf$-reduced representative of $\theta v_2[[l_2:))$ is the value of the
\tcslp $\cS_C[[l_2:)) \langle \theta,\emptyword \rangle$, and, again,
we can find an \slp $\cS_3$ with the same value in polynomial time.
Now the word $\val(\cS_1\cS_2\cS_3)$ represents the same element of $G$ as
each of the words $v_1v_2$ and $v_3$, and will be equal as a word to $v_3$ if
and only if it is $\nf$-reduced.

By Proposition~\ref{prop:slp_results}\,(v), we can test in polynomial time
whether $\val(\cS_1\cS_2\cS_3)$ is $\nf$-reduced; if it is,
then $\cS_1\cS_2\cS_3$ is an \slp for $v_3$.
We define a \tcslp $\cT_1:= \cT_B[[:k_1)) \langle \emptyword,\zeta \rangle$
as an extension of $\cT_B$ by a single variable $S_{\cT_1}$,
and similarly $\cT_3 := \cT_C[[l_2:)) \langle \theta,\emptyword \rangle$ as an
extension of $\cT_C$. The \tcslps  $\cT_1$ and $\cT_3$ have the same values
as $\cS_1,\cS_3$, respectively, and
the concatenation $\cT_1\cS_2\cT_3$ is a \tcslp with value $v_3$.
We set our copy of $A$ within $\cT$ to be the start variable of that \tcslp,
and define $\rho_\cT(A)$ accordingly.

(The reason that we do not simply adjoin the \slps $\cS_1$ and $\cS_3$ to
$\cT$ is that, if we did that, then we would be unable to prove that the
$|\cT|$ remains bounded throughout the complete process by a polynomial
function of $|\cG|$. This explains why we needed to introduce the concepts
of \tslp and \tcslp.)

We shall now briefly describe the corresponding argument in Case 1, which is
similar to that for Case 2, but simpler. Suppose that there is a vertex $c'$ on
$\gamma_{\widehat{v_1}}$ that corresponds to the vertex $c$ of
$\gamma_{\widehat{v_2}}$ - the other case is similar.
Then, as explained earlier,
we can locate $c'$, we can find $\eta \in \Sigma^*$ that labels a path of
length at most $L$ in $\Gamma$ from $c$ to $c'$, and we can calculate the
integer $k_1$ such that $v_1[[:k_1))$ is the prefix of $v_1$ from $a$ to $c'$.

Let $a_1$ be the vertex on $\gamma_{\widehat{v_1}}$ that is between $a$ and $c'$
and is at distance $K$ in $\gamma_{\widehat{v_1}}$ from $c'$ (or $a_1=a$
if there is no such vertex), and let $k_2$ be its distance from $a$ along
$v_1$.  Then by Proposition~\ref{prop:backup}, $a_1$ has
a corresponding vertex $a_3$ on $\gamma_{\widehat{v_3}}$.
Let $\zeta \in \Sigma^*$ be the label of a path of length at most $L$ in
$\Gamma$ from $a_3$ to $a_1$.  Then
$$v_3 = \nf(v_1[[:k_2))\zeta^{-1})\nf(\zeta v_1[[k_2:k_1))\eta^{-1})$$
and, much as in Case 2, we can find $\zeta$ by exhaustive search,
then define the \tcslp $\cT$ as the concatenation $\cT_1\cS_2$,
where $\val(\cT_1) = \nf(v_1[[:k_2)\zeta^{-1}))$, and
$\val(\cS_2) = \nf(\zeta v_1[[k_2:k_1))\eta^{-1})$.

In both Cases 1 and 2,
we observe that all of the variables of $\cT_B$ and $\cT_C$ were defined during
the processing of other variables, but a copy of $A$, the variables
$S_{\cT_1}$ and $S_{\cT_2}$ and also the variables of $\cS_2$
are added to $V_\cT$ during the processing of the variable $A$ of $\cG$,
and their images under $\rho_T$ are correspondingly defined.
Since the length of $\cS_2$ as a word over $\widehat{\Sigma}$ is bounded by the
constant $3K+6L+\delta$
we know from Proposition~\ref{prop:slpshort} that its size is bounded by a
constant multiple of $\log(|\val(\cS_2)|)$ and hence,
by Proposition~\ref{prop:slp_results}\,(ii), of $|\cG|$.

So after all variables of $\cG$ have been processed, the size of the
final \tcslp $\cT$ is bounded by a quadratic function of $\cG$. This
completes the proof.
\end{proofof}

\begin{proofof}{Lemma \ref{lem:geo2slextcslp}}
As we observed in Remark~\ref{rem:subword},
since all components of $w = \val(\cG)$ already have roots,
it follows from Proposition~\ref{prop:comproot} that
for each variable $A$ of $\cG$, all occurrences of $\val(A)$ as subwords
of $w$ that are derived from $A$ consist either of complete components of $w$
together with subwords in $(\Sigma \setminus \cH)^*$,
or of subwords of a component. So the assumption that $\hat{w}$ is a
geodesic word implies that $\widehat{\val_\cG(A)}$ is also geodesic for all
variables $A$ of $\cG$.

We follow the proof of Theorem \ref{thm:slextcslp}.
When we consider the variable $A$ with $\rho(A) = BC$, we know that
$\widehat{\val_\cG(A)} = \widehat{\val_\cG(B)}\widehat{\val_\cG(C)}$
is geodesic, and so the concatenation of the consecutive sides
$ab$ and $bc$ of the associated hyperbolic triangle in $\widehat{\Gamma}$,
which is labelled by the word $\widehat{uv}=\hat{u}\hat{v}$,
is a geodesic path in $\Gamma$.

Proposition \ref{prop:backup} now implies that
the vertices $a_1$ and $c_2$ on the two paths at distance $K$ from $b$ in
$\widehat{\Gamma}$ have corresponding vertex on $\gamma_{\widehat{v_3}}$.

After defining $a_1$ and $c_2$ in this way, the rest of the proof is the
same as the proof of Theorem \ref{thm:slextcslp}, with
$\eta = \emptyword$. But it is of course important to point out that we have
not used the conclusion of the lemma in its proof!
\end{proofof}


\end{document}